\numberwithin{equation}{section}
\newtheorem{theorem}{Theorem}[section]
\newtheorem{proposition}{Proposition}[section]
\newtheorem{lemma}{Lemma}[section]
\newtheorem{definition}{Definition}[section]
\newtheorem{remark}{Remark}[section]
\newtheorem{ex}{Example}[section]
\journal{Elsevier}
\begin{document}

\begin{frontmatter}
\title{Sharp embedding results and geometric inequalities for H\"{o}rmander vector fields}

\author[label1]{Hua Chen\corref{cor1}}
\ead{chenhua@whu.edu.cn}
\author[label2]{Hong-Ge Chen}
\ead{hongge\_chen@whu.edu.cn}
\author[label1]{Jin-Ning Li}
\ead{lijinning@whu.edu.cn}

\address[label1]{School of Mathematics and Statistics, Wuhan University, Wuhan 430072, China}
\address[label2]{Wuhan Institute of Physics and Mathematics, Innovation Academy for Precision Measurement Science and Technology, Chinese Academy of Sciences, Wuhan 430071, China}

\cortext[cor1]{corresponding author}

\begin{abstract}
Let $U$ be a connected open subset of $\mathbb{R}^n$, and let $X=(X_1,X_{2},\ldots,X_m)$ be a system of H\"{o}rmander vector fields defined on $U$. This paper addresses sharp 
 embedding results and geometric inequalities in the generalized Sobolev space $\mathcal{W}_{X,0}^{k,p}(\Omega)$, where $\Omega\subset\subset U$ is a general open bounded subset of $U$. By employing  Rothschild-Stein's lifting technique and saturation method, we prove the representation formula for smooth functions with compact support in $\Omega$. Combining this representation formula with weighted weak-$L^p$ estimates, we derive sharp Sobolev inequalities on $\mathcal{W}_{X,0}^{k,p}(\Omega)$, where the critical Sobolev exponent depends on the generalized M\'{e}tivier index. As applications of these sharp Sobolev inequalities, we establish the isoperimetric inequality, logarithmic Sobolev inequalities, Rellich-Kondrachov compact embedding theorem, Gagliardo-Nirenberg inequality, Nash inequality, and Moser-Trudinger inequality in the context of general H\"{o}rmander vector fields.
\end{abstract}
\begin{keyword}
H\"{o}rmander condition\sep  Sobolev inequality\sep generalized M\'{e}tivier index.
\MSC[2020] 35J70, 35H20, 46E35
\end{keyword}
\end{frontmatter}

\section{Introduction and main results}

For $n\geq 2$, let $U$ be a connected open subset of $\mathbb{R}^n$. Consider a system of real smooth vector fields
$X=(X_1,X_{2},\ldots,X_m)$ defined on $U$, satisfying the following assumption:
\begin{enumerate}
\item [(H)]
 There exists a smallest positive integer $s_{0}$ such that the vector fields $X_1,X_{2},\ldots,X_m$ together with their commutators of length at most $s_{0}$ span $\mathbb{R}^n$ at each point in $U$.
\end{enumerate}
Assumption (H) is known as the H\"{o}rmander's condition, and the positive integer $s_{0}$ is called the H\"{o}rmander index of $X$ related to $U$. The smooth vector fields $X$ under H\"{o}rmander's condition (H) usually refer to H\"{o}rmander vector fields.

Then, we recall the generalized Sobolev spaces associated with $X$. Let
$1\leq j_{i}\leq m$ and $J=(j_{1},\ldots,j_{l})$ denotes a multi-index with length $|J|=l$. We adopt the notation $X^{J}=X_{j_{1}}X_{j_{2}}\cdots X_{j_{l-1}}X_{j_{l}}$ for $|J|=l$, and $X^{J}=\mbox{id}$ for $|J|=0$. For any $k\in \mathbb{N}^{+}$ and $p\geq 1$, we define the function space
\[ \mathcal{W}_{X}^{k,p}(U)=\big\{u\in L^{p}(U)\big|X^{J}u\in L^{p}(U),~~ \forall J=(j_{1},\ldots,j_{s}) ~\mbox{with}~|J|\leq k\big\}, \]
 and set the norm
\[ \|u\|_{\mathcal{W}_{X}^{k,p}(U)}^{p}=\sum_{|J|\leq k}\|X^{J}u\|_{L^p(U)}^{p}.\]
It is well-known (e.g., see \cite[Theorem 1]{Xu90}) that  $\mathcal{W}_{X}^{k,p}(U)$ forms a separable Banach space for $1\leq p<+\infty$, and it is reflexive for $1<p<+\infty$. In particular, we set $\mathcal{H}_{X}^k(U)=\mathcal{W}_{X}^{k,2}(U)$, and it follows that $\mathcal{H}_{X}^k(U)$ is a Hilbert space endowed with the inner product
\begin{equation*}\label{innerproduct}
  (u,v)_{\mathcal{H}_{X}^k(U)}=\sum_{|J|=0}^k\int_{U}X^J u\cdot X^J vdx.
\end{equation*}
Let $\Omega\subset\subset U$ be a bounded open subset. We define  $\mathcal{W}_{X,0}^{k,p}(\Omega)$ as the closure of $C_{0}^{\infty}(\Omega)$ in $\mathcal{W}_{X}^{k,p}(U)$, which gives a generalized Sobolev space of functions vanishing at the boundary of $\Omega$. Similarly, we denote $\mathcal{H}_{X,0}^k(\Omega)$ as $\mathcal{W}_{X,0}^{k,2}(\Omega)$.

Since H\"{o}rmander's celebrated work \cite{hormander1967} on hypoellipticity, the study of nonlinear degenerate elliptic equations arising from H\"{o}rmander vector fields has developed significantly. These nonlinear degenerate equations, taking the form
\begin{equation}\label{1-1}
  \sum_{j=1}^{m}X_{j}^{*}(|Xu|^{p-2}X_{j}u)=f(x,u,Xu)
\end{equation}
with $Xu=(X_1u,X_{2}u,\ldots,X_mu)$ and $X_{j}^{*}=-X_{j}-{\rm div}X_{j}$, naturally arise in the study of the geometry of CR manifolds and in the theory of quasi-conformal mappings on stratified, nilpotent Lie groups, as well as in subelliptic variational problems (see \cite{Jerison-Lee-1988,Jerison-Lee-1989,jost1998,Xu90}). Clearly,  the function space $\mathcal{W}_{X,0}^{k,p}(\Omega)$ is a fundamental space in study the Dirichlet boundary problem of equation \eqref{1-1}. In this context, the sharp embedding results and geometric inequalities on the generalized Sobolev space $\mathcal{W}_{X,0}^{k,p}(\Omega)$ play a pervasive and essential role, similar to their classical counterparts.

 When $X=(\partial_{x_{1}},\partial_{x_{2}},\ldots,\partial_{x_n})$, $\mathcal{W}_{X,0}^{k,p}(\Omega)$ reduces to the classical Sobolev space $W_{0}^{k,p}(\Omega)$. For $k=1$, restricting the Gagliardo-Nirenberg-Sobolev inequality
\begin{equation}\label{1-2}
  \left(\int_{\mathbb{R}^n}|u|^{p^{*}}dx\right)^{\frac{1}{p^{*}}}\leq C\left(\int_{\mathbb{R}^n}|\nabla u|^{p}dx\right)^{\frac{1}{p}}\qquad \forall u\in C_{0}^{\infty}(\mathbb{R}^n)
\end{equation}
on $C_{0}^{\infty}(\Omega)$  naturally yields  the Sobolev embedding
\begin{equation}\label{1-2-a}
W_{0}^{1,p}(\Omega)\hookrightarrow L^{p^{*}}(\Omega),
\end{equation}
where $p^{*}=\frac{np}{n-p}$ is the critical Sobolev exponent depending only on the dimension $n$ and $p$. However, the sharp Sobolev embedding, particularly the critical Sobolev exponent, for the space $\mathcal{W}^{k,p}_{X,0}(\Omega)$ within the framework of H\"{o}rmander vector fields have not been fully understood.

Previous investigations by Capogna-Danielli-Garofalo \cite{Capogna1993,Capogna1994} (also see \cite[Theorem 3.1]{Capogna1996}) claimed the following local Sobolev inequality:
\begin{equation}\label{1-3}
  \left(\frac{1}{|B(x_0,r)|}\int_{B(x_0,r)}|u|^{q}dx\right)^{\frac{1}{q}}\leq Cr\left(\frac{1}{|B(x_0,r)|}\int_{B(x_0,r)}|Xu|^pdx\right)^{\frac{1}{p}}
\end{equation}
for $1\leq p<Q$, $1\leq q\leq \frac{Qp}{Q-p}$, and all $u\in  \mathcal{W}_{X,0}^{1,p}(B(x_0,r))$. Here, $Q$ is the local homogeneous dimension relative to $\overline{\Omega}$; $B(x_0,r)$ denotes the subunit ball induced by the subunit metric $d$, centered at $x_{0}\in \overline{\Omega}$, with a small radius $0<r\leq R_0$, where $R_0$ is a positive constant. The precise definitions of $Q$, $d$ and $B(x,r)$ will be postponed in Section \ref{section2} blow. Based on
\eqref{1-3} and the standard partition of unity arguments, Capogna-Danielli-Garofalo derived the following Sobolev embedding:
\begin{equation}\label{1-4}
  \mathcal{W}_{X,0}^{1,p}(\Omega)\hookrightarrow L^{\frac{Qp}{Q-p}}(\Omega)\qquad \mbox{for}\quad 1\leq p<Q.
\end{equation}
Comparing with the case of left-invariant vector fields on a nilpotent homogeneous group, they announced (see \cite[p. 205]{Capogna1994}) that the Sobolev exponent $\frac{Qp}{Q-p}$ in \eqref{1-3} and \eqref{1-4}, determined by the local homogeneous dimension $Q$, is best possible.

Nevertheless, it is important to note that the vector fields satisfying H\"{o}rmander's condition need not necessarily be the left-invariant vector fields on a nilpotent homogeneous group. Therefore, the Sobolev exponent $\frac{Qp}{Q-p}$ in \eqref{1-3} and \eqref{1-4} may \emph{NOT} be optimal for the space $\mathcal{W}^{1,p}_{X,0}(\Omega)$ associated with general H\"{o}rmander vector fields. Further evidence presented in \cite[Corollary 1]{Yung2015} indicates that if $\Omega\subset\subset U$ is a bounded open subset with smooth boundary $\partial\Omega$, then
\begin{equation}\label{1-5}
  \|f\|_{L^{\frac{p\tilde{\nu}}{\tilde{\nu}-p}}(\Omega)}\leq C\left(\|Xf\|_{L^p(\Omega)}+\|f\|_{L^p(\Omega)}\right)\qquad \forall f\in C^{\infty}(\overline{\Omega}),
\end{equation}
where $1\leq p<\tilde{\nu}$, and $\tilde{\nu}$ denotes the non-isotropic dimension of $\overline{\Omega}$ related to $X$ (see \eqref{2-2} blow for precise definition). The non-isotropic dimension $\tilde{\nu}$ is also known as the generalized M\'{e}tivier index, satisfying $\tilde{\nu} \leq Q$ as shown in \eqref{2-6} below. This implies that for any bounded open subset $\Omega\subset\subset U$ with smooth boundary $\partial\Omega$, \eqref{1-4} can be improved by the following sharper Sobolev embedding:
\begin{equation}\label{1-6}
  \mathcal{W}_{X,0}^{1,p}(\Omega)\hookrightarrow L^{\frac{p\tilde{\nu}}{\tilde{\nu}-p}}(\Omega)\subset L^{\frac{Qp}{Q-p}}(\Omega) \qquad \mbox{for}\quad 1\leq p<\tilde{\nu}.
\end{equation}

The aim of this paper is to explore the sharp embedding results and fundamental geometric inequalities on $\mathcal{W}_{X,0}^{k,p}(\Omega)$ for arbitrary bounded open subsets $\Omega\subset\subset U$, without requiring smoothness on $\partial\Omega$. Our interest in sharp embedding results of generalized Sobolev space is motivated by our recent studies \cite{chen-chen-li-liao2023,chen-chen-yuan2022} on semilinear subelliptic Dirichlet problems, as well as a natural philosophy arising from the classical Sobolev embedding \eqref{1-2-a}. In the classical elliptic case, since the space $W_{0}^{k,p}(\Omega)$ consists of functions that ``\emph{vanishing at the boundary}",  the classical Sobolev embedding \eqref{1-2-a} does not necessitate any smoothness on the boundary $\partial\Omega$. Following this reasoning, it is extremely interesting and important to understand whether \eqref{1-6} can be extended to arbitrary bounded open sets. Furthermore, similar to the classic elliptic case, the sharp embedding in the case where $p>\tilde{\nu}$ certainly plays an essential role in the study of degenerate elliptic equations.

For general  H\"{o}rmander vector fields, one cannot expect the analogous of Gagliardo-Nirenberg-Sobolev inequality \eqref{1-2} to hold in the entire space $\mathbb{R}^n$ without further assumptions. Therefore, the representation formula for functions  $f\in C_{0}^{\infty}(\Omega)$ associated with H\"{o}rmander vector fields serves as a crucial component in establishing the sharp Sobolev embedding in $\mathcal{W}_{X,0}^{k,p}(\Omega)$.  In earlier studies,  such as those cited in \cite[Proposition 2.4]{Capogna1993}, \cite[p. 210, Remark]{Capogna1994} and \cite[Lemma 5.1]{Lu1992}, the construction of this representation formula relies entirely on an ambiguous statement in \cite[p. 114, Corollary]{NSW1985} regarding the global fundamental solution $\Gamma(x,y)$ for the subelliptic operator $-\triangle_{X}=\sum_{j=1}^{m}X_{j}^{*}X_{j}$. Specifically, \cite[p. 114, Corollary]{NSW1985} states that such a fundamental solution $\Gamma(x,y)$ has been constructed in \cite{stein1976} and possesses the local size estimate  in the form of
\begin{equation}\label{1-7}
\Gamma(x,y)\leq C\frac{d(x,y)^2}{|B(x,d(x,y)|}.
\end{equation}
However, as later pointed out by Nagel \cite[Theorem 11]{Nagel1986} and recently emphasized by  Biagi-Bonfiglioli-Bramanti \cite[p. 1882]{Biagi2022}, no fundamental solution is exhibited in \cite{stein1976}; only the parametrix is provided. In fact, the kernel $\Gamma$  obtained by locally saturating the lifted variables for the parametrix $\widetilde{\Gamma}$ associated with the lifting operators  $-\widetilde{\triangle_{X}}=\sum_{j=1}^{m}\widetilde{X_{j}^{*}}\widetilde{X_{j}}$ is only potentially a local parametrix for $-\triangle_{X}$, but \emph{NOT} necessarily a fundamental solution. This means that \eqref{1-7} only provides an estimate for the parametrix $\Gamma$,  \emph{NOT} for the fundamental solution. Additionally, the parametrix $\Gamma$ is defined only locally and in a non-unique way.  Achieving a genuine local fundamental solution by saturating a parametrix in the lifted space requires considerable additional effort.

 To address these technical issues and to ensure the exposition is reasonably self-contained, we provide a rigorous proof of the representation formula for functions $f\in C_{0}^{\infty}(\Omega)$ in Proposition \ref{prop3-2} blow. Drawing inspiration from \cite{stein1976,NSW1985}, our proof employs Rothschild-Stein's lifting technique and saturation method, along with the estimation of type $\lambda$ operators in the high-dimensional lifting space, thus eliminating the dependence on the fundamental solution of $-\triangle_X$. Refining the estimates in \cite{Yung2015}, we then obtain weighted weak-$L^p$ estimates for the $T$ operators induced by the representation formula. By combining these results with carefully estimated volumes of subunit balls and the degenerate Friedrichs-Poincar\'{e} inequality, we extend \eqref{1-6} to arbitrary bounded open subsets $\Omega\subset\subset U$, thereby improving upon the previous results \eqref{1-4}  by  Capogna-Danielli-Garofalo \cite{Capogna1993,Capogna1994}. Specifically, we have

\begin{theorem}[Sobolev inequalites for $kp\leq \tilde{\nu}$]
\label{thm1}
Let $X=(X_1,X_{2},\ldots,X_m)$ satisfy condition (H). Then, for any bounded open subset $\Omega\subset\subset U$ and any positive number $p\geq 1$, there exists a positive constant $C>0$ such that
\begin{enumerate}[(1)]
  \item If $kp< \tilde{\nu}$ and  $\frac{1}{q}=\frac{1}{p}-\frac{k}{\tilde{\nu}}$, we have
  \begin{equation}\label{1-8}
\|u\|_{L^{q}(\Omega)}\leq C\sum_{|J|=k}\|X^{J}u\|_{L^{p}(\Omega)}\qquad \forall u\in \mathcal{W}^{k,p}_{X,0}(\Omega);
      \end{equation}
  \item  If $kp=\tilde{\nu}$ and  $1\leq q<\infty$, we also have
  \begin{equation}\label{1-9}
      \|u\|_{L^{q}(\Omega)}\leq C\sum_{|J|=k}\|X^{J}u\|_{L^{p}(\Omega)}\qquad \forall u\in \mathcal{W}^{k,p}_{X,0}(\Omega).
      \end{equation}
\end{enumerate}
Here, $\tilde{\nu}$ is the generalized M\'{e}tivier index defined in \eqref{2-2} below.
\end{theorem}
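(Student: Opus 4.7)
The plan is to reduce both parts of the theorem to mapping properties of a fractional-integral operator arising from the representation formula of Proposition \ref{prop3-2}. By density of $C_0^\infty(\Omega)$ in $\mathcal{W}_{X,0}^{k,p}(\Omega)$ it suffices to treat $u\in C_0^\infty(\Omega)$, in which case Proposition \ref{prop3-2} should yield a pointwise bound
\[
|u(x)|\leq C\sum_{|J|=k}\int_\Omega \frac{d(x,y)^k}{|B(x,d(x,y))|}\,|X^J u(y)|\,dy.
\]
Thus \eqref{1-8} reduces to the $L^p(\Omega)\to L^q(\Omega)$ boundedness of the integral operator $T_k$ with kernel $K(x,y)=d(x,y)^k/|B(x,d(x,y))|$.

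For Part (1), I combine the definition of the generalized M\'{e}tivier index $\tilde\nu$ in \eqref{2-2} with the Nagel--Stein--Wainger volume estimate to obtain a lower bound of the form $|B(x,r)|\geq c\,r^{\tilde\nu}$, uniform in $x\in\overline\Omega$ and $0<r\leq R_0$, which makes $T_k$ behave as a Riesz-type fractional integral of order $k$ with respect to the effective dimension $\tilde\nu$. A Hedberg-type truncation at a free radius $R$ then gives
\[
|T_k f(x)|\leq C_1 R^k \mathcal{M}f(x)+C_2 R^{k-\tilde\nu/p}\|f\|_{L^p(\Omega)},
\]
where $\mathcal{M}$ denotes the Hardy--Littlewood maximal operator in the subunit metric; optimizing in $R$ produces the weighted weak-$L^p$ estimate referenced in the introduction, and Marcinkiewicz interpolation between two such weak bounds taken for exponents slightly above and below $p$ upgrades it to the strong-type $(p,q)$ inequality \eqref{1-8} for $1<p<\tilde\nu/k$. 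The endpoint $p=1$ can be treated by a Maz'ya-type truncation argument applied to $u$.

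For Part (2), given any finite $q\geq 1$ I choose $\tilde p\in[1,p)$ so that $\frac{1}{q}=\frac{1}{\tilde p}-\frac{k}{\tilde\nu}$, which is possible because $kp=\tilde\nu$ forces the exponent produced by Part (1) to tend to $\infty$ as $\tilde p\to p^-$. Applying Part (1) with exponent $\tilde p$ and invoking H\"{o}lder's inequality together with $|\Omega|<\infty$ to bound $\|X^J u\|_{L^{\tilde p}(\Omega)}\leq|\Omega|^{1/\tilde p-1/p}\|X^J u\|_{L^p(\Omega)}$ then yields \eqref{1-9}.

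The main obstacle I expect is the weighted weak-$L^p$ bound for $T_k$ with the sharp exponent $\tilde\nu$ rather than the possibly larger local homogeneous dimension $Q$: at points $x\in\overline\Omega$ where the pointwise non-isotropic dimension $\tilde\nu(x)$ is strictly less than $\tilde\nu$, the naive lower bound $|B(x,r)|\geq c\,r^{\tilde\nu}$ may degenerate, and one must either introduce a genuine weight absorbing the gap $r^{\tilde\nu-\tilde\nu(x)}$ or exploit uniform volume asymptotics at the scale $R_0$, as in the refinement of \cite{Yung2015} emphasized in the introduction. All the remaining ingredients -- Proposition \ref{prop3-2}, the subelliptic maximal inequality, Marcinkiewicz interpolation, and the Maz'ya truncation for the endpoint -- are either established earlier in the paper or are classical.
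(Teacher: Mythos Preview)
Your plan contains a genuine gap at the outset: Proposition~\ref{prop3-2} is a \emph{first-order} representation formula, and it does not yield the $k$-th order pointwise bound you postulate. No higher-order version is proved in the paper, and iterating the first-order formula $k$ times produces an iterated kernel, not the single kernel $d(x,y)^k/|B(x,d(x,y))|$. The paper sidesteps this entirely by first establishing the $k=1$ Sobolev inequality \eqref{4-10} and then iterating that \emph{inequality} (not the representation formula) to reach general $k$; you should do the same. Note also that the right-hand side of Proposition~\ref{prop3-2} carries a zero-order term $|u|$, which must be absorbed via the Friedrichs--Poincar\'e inequality (Proposition~\ref{prop2-6}) before any mapping-property argument can conclude.

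With those two fixes, your Hedberg/maximal-function route for $k=1$, $p>1$ is a legitimate alternative to the paper's, and in fact the ``main obstacle'' you flag is not one: the uniform lower bound $|B(x,r)|\geq c\,r^{\tilde\nu}$ on $\overline\Omega$ follows directly from Proposition~\ref{prop2-2} and Proposition~\ref{prop2-3}, since $\Lambda(x,r)\geq Cr^{\tilde\nu}$ holds at \emph{every} $x\in\overline\Omega$ regardless of whether $\nu(x)<\tilde\nu$. No weights are needed for your argument, and the Hedberg pointwise inequality together with strong $(p,p)$ boundedness of the subelliptic maximal operator already gives the strong $(p,q)$ bound---Marcinkiewicz interpolation is superfluous here. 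The paper, by contrast, deliberately works through the weighted measures $|\lambda_I|^{1/(d(I)-1)}dx$: it proves a weak $L^1\to L_w^{d(I)/(d(I)-1)}$ bound for the $T$-operator for each $n$-tuple $I$ (Proposition~\ref{prop3-4}), upgrades it to strong type by Maz'ya truncation (Lemma~\ref{lemma4-2}), and then assembles the unweighted $L^{\tilde\nu/(\tilde\nu-1)}$ estimate from a finite cover on which some $|\lambda_{I_i}|$ is bounded below with $d(I_i)=\nu(x_i)\leq\tilde\nu$. This weighted detour delivers the $p=1$ endpoint first and then bootstraps to $p>1$ by applying the $p=1$ case to $|u|^r$; your approach reverses the order and must run the $p=1$ endpoint separately via truncation. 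Your Part~(2) reduction agrees with the paper's.
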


Theorem \ref{thm1} yields the following fundamental inequalities:
\begin{theorem}
\label{thm2}
Suppose $X=(X_1,X_{2},\ldots,X_m)$ satisfy condition (H), and $\Omega\subset\subset U$ is a bounded open subset of $U$. Let $p\geq 1$ be a positive real numbers, $k\in\mathbb{N}^{+}$ with $kp<\tilde{\nu}$.  Assume further that the positive real numbers $s_{1},s_{2},a,b$ satisfy the following conditions:
\begin{equation}\label{conditions}
    1\leq s_{1}<s_{2}<\frac{p\tilde{\nu}}{\tilde{\nu}-kp}~~~~\text{and}~~~~\left\{
         \begin{array}{ll}
           b-a=\frac{\tilde{\nu}-kp}{\tilde{\nu}},\\[2mm]
          bs_{2}-as_{1}=p,
         \end{array}
       \right.
  \end{equation}
  Then, there exists a positive constant $C>0$ such that
    \begin{equation}\label{1-11}
\|u\|_{L^{s_{2}}(\Omega)}^{bs_{2}}\leq C\sum_{|J|= k}\|X^J u\|_{L^p(\Omega)}^{p}\|u\|_{L^{s_{1}}(\Omega)}^{as_{1}},~~~~\forall u\in \mathcal{W}_{X,0}^{k,p}(\Omega)\cap L^{s_{1}}(\Omega).
\end{equation}
As a result of \eqref{1-11}, we have
\begin{enumerate}[(1)]
\item (Gagliardo-Nirenberg inequality) If $s_{1}\geq 1$, $b=\frac{p}{\theta s_{2}}$ and $a=\frac{p(1-\theta)}{s_{1}\theta}$ with $\theta\in(0,1]$, then $\frac{1}{s_{2}}=\frac{1-\theta}{s_{1}}+\theta\left(\frac{1}{p}-\frac{k}{\tilde{\nu}}\right)$ and
    \[\|u\|_{L^{s_{2}}(\Omega)}\leq C\|u\|_{L^{s_{1}}(\Omega)}^{1-\theta}\sum_{|J|= k}\|X^J u\|_{L^p(\Omega)}^{\theta}\qquad \forall u\in \mathcal{W}_{X,0}^{k,p}(\Omega)\cap L^{s_{1}}(\Omega).\]

      \item (Nash inequality) If $\tilde{\nu}>2$, $p=2$, $k=1$, $s_{1}=1$, $s_{2}=2$, $a=\frac{4}{\tilde{\nu}}$ and $b=1+\frac{2}{\tilde{\nu}}$, then
      \[\left(\int_{\Omega}|u|^2 dx\right)^{1+\frac{2}{\tilde{\nu}}}\leq  C\left(\int_{\Omega}|Xu|^2dx\right)\left(\int_{\Omega}|u| dx\right)^{\frac{4}{\tilde{\nu}}}\quad\forall u\in \mathcal{H}_{X,0}^{1}(\Omega).\]
      \item (Moser inequality) If $\tilde{\nu}>2$, $p=2$, $k=1$, $s_{1}=2$, $s_{2}=2+\frac{4}{\tilde{\nu}}$, $a=\frac{2}{\tilde{\nu}}$ and $b=1$, then
\[\left(\int_{\Omega}|u|^{2+\frac{4}{\tilde{\nu}}} dx\right)\leq  C\left(\int_{\Omega}|Xu|^2dx\right)\left(\int_{\Omega}|u|^2 dx\right)^{\frac{2}{\tilde{\nu}}}\quad\forall u\in \mathcal{H}_{X,0}^{1}(\Omega).\]
\end{enumerate}
\end{theorem}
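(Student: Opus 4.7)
The plan is to deduce \eqref{1-11} from the sharp Sobolev inequality of Theorem \ref{thm1}(1) by a single Hölder interpolation between $L^{s_{1}}$ and the critical Lebesgue space $L^{q}$ with $q=\frac{p\tilde{\nu}}{\tilde{\nu}-kp}$, and then recover the Gagliardo-Nirenberg, Nash, and Moser inequalities by specializing $(a,b,s_{1},s_{2})$.

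First I would note that by Theorem \ref{thm1}(1), every $u\in \mathcal{W}^{k,p}_{X,0}(\Omega)$ satisfies $\|u\|_{L^{q}(\Omega)}\le C\sum_{|J|=k}\|X^{J}u\|_{L^{p}(\Omega)}$, with $\frac{1}{q}=\frac{1}{p}-\frac{k}{\tilde{\nu}}$. Since the hypothesis \eqref{conditions} gives $1\le s_{1}<s_{2}<q$, I pick $\lambda\in(0,1)$ by the interpolation identity $\frac{1}{s_{2}}=\frac{\lambda}{s_{1}}+\frac{1-\lambda}{q}$; classical $L^{p}$-interpolation then yields
\[ \|u\|_{L^{s_{2}}(\Omega)}\le \|u\|_{L^{s_{1}}(\Omega)}^{\lambda}\|u\|_{L^{q}(\Omega)}^{1-\lambda}\le C\|u\|_{L^{s_{1}}(\Omega)}^{\lambda}\Bigl(\sum_{|J|=k}\|X^{J}u\|_{L^{p}(\Omega)}\Bigr)^{1-\lambda}. \]
Raising to the power $bs_{2}$ and absorbing the finite-dimensional power-mean comparison $\bigl(\sum_{|J|=k}\|X^{J}u\|_{L^{p}}\bigr)^{p}\le C'\sum_{|J|=k}\|X^{J}u\|_{L^{p}}^{p}$ into the constant will produce \eqref{1-11}, provided the exponents line up.

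The key bookkeeping is the verification that \eqref{conditions} is \emph{equivalent} to the pair of compatibility identities $\lambda bs_{2}=as_{1}$ and $(1-\lambda)bs_{2}=p$ forced by the above interpolation. Using the relation $\frac{p}{q}=\frac{\tilde{\nu}-kp}{\tilde{\nu}}=b-a$ together with $bs_{2}-as_{1}=p$, a direct computation with $\lambda=\frac{(q-s_{2})s_{1}}{(q-s_{1})s_{2}}$ confirms this matching. Once \eqref{1-11} is in place, the three consequences follow by plugging in: for \textbf{Gagliardo-Nirenberg}, substituting $b=p/(\theta s_{2})$, $a=p(1-\theta)/(s_{1}\theta)$ into \eqref{1-11} and taking $\theta/p$-th roots gives the stated inequality, while the identity $b-a=(\tilde{\nu}-kp)/\tilde{\nu}$ translates directly into $\frac{1}{s_{2}}=\frac{1-\theta}{s_{1}}+\theta\bigl(\frac{1}{p}-\frac{k}{\tilde{\nu}}\bigr)$; for \textbf{Nash} and \textbf{Moser}, the explicit quadruples $(p,k,s_{1},s_{2},a,b)=(2,1,1,2,\tfrac{4}{\tilde{\nu}},1+\tfrac{2}{\tilde{\nu}})$ and $(2,1,2,2+\tfrac{4}{\tilde{\nu}},\tfrac{2}{\tilde{\nu}},1)$ are checked by inspection to satisfy \eqref{conditions}, and \eqref{1-11} then specializes to the stated inequalities (using $\sum_{j=1}^{m}\|X_{j}u\|_{L^{2}}^{2}=\int_{\Omega}|Xu|^{2}dx$).

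The main obstacle here is purely algebraic: one must carefully track the Hölder interpolation parameter $\lambda$ as a function of $(s_{1},s_{2},q)$ and show that the two scalar conditions in \eqref{conditions} are exactly the matching conditions dictated by the exponents on the right-hand side. No analytic ingredient beyond Theorem \ref{thm1} is required.
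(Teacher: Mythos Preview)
Your proof is correct and follows essentially the same route as the paper: the paper also derives \eqref{1-11} by a single H\"older/interpolation step between $L^{s_{1}}$ and $L^{q}$ with $q=\frac{p\tilde\nu}{\tilde\nu-kp}$, then invokes Theorem~\ref{thm1}(1); the algebraic matching of exponents you describe is exactly the computation $\frac{p/(bs_{2})}{q}+\frac{as_{1}/(bs_{2})}{s_{1}}=\frac{1}{s_{2}}$ carried out there. The specializations (1)--(3) are handled identically.
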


We next discuss the isoperimetric inequality for Carnot-Carath\'{e}odory spaces. For this purpose, we introduce some notations in \cite{Monti2001,Garofalo1996}. For $u\in L^{1}(\Omega)$, we define the $X$-variation of $u$ in $\Omega$ as
\begin{equation}\label{def-var}
{\rm Var}_{X}(u;\Omega)=\sup_{\varphi\in \mathcal{F}(\Omega;\mathbb{R}^m)}\int_{\Omega}u\sum_{j=1}^{m}X_{j}^{*}\varphi_{j}dx,
\end{equation}
where $$\mathcal{F}(\Omega;\mathbb{R}^m):=\left\{\varphi=(\varphi_{1},\ldots,\varphi_{m})\in (C_{0}^{1}(\Omega))^{m}\bigg|\|\varphi\|_{L^{\infty}(\Omega)}=\sup_{x\in \Omega}\left(\sum_{j=1}^{m}|\varphi_{j}(x)|^{2}\right)^{\frac{1}{2}}\leq 1\right\},$$ and $X_{j}^{*}$ denotes the formal adjoint of $X_{j}$. The divergence theorem easily gives that ${\rm Var}_{X}(u;\Omega)=\|Xu\|_{L^{1}(\Omega)}$ for all $u\in C^{1}(\Omega)\cap \mathcal{W}_{X}^{1,1}(\Omega)$. Given a measurable set $E\subset \mathbb{R}^n$,  we denote by
\begin{equation}\label{def-px}
 P_{X}(E;\Omega):={\rm Var}_{X}(\chi_{E};\Omega)=\sup_{\varphi\in \mathcal{F}(\Omega;\mathbb{R}^m)}\int_{E}\sum_{j=1}^{m}X_{j}^{*}\varphi_{j}dx
\end{equation}
 the $X$-perimeter of $E$ in $\Omega$,  where  $\chi_{E}$ is the indicator function of $E$. The set $E$ is of finite $X$-perimeter (or a $X$-Caccioppoli set) in $\Omega$ if $P_{X}(E;\Omega)<+\infty$.

 By means of the Sobolev inequality \eqref{1-3}, Capogna-Danielli-Garofalo \cite{Capogna1994,Capogna1994-iso} initially constructed the isoperimetric inequality
 in subunit balls. Specifically, they  proved that for any subunit ball $B(x_0,r)$  centered at $x_{0}\in \overline{\Omega}$, with a small radius $0<r\leq R_0$, there exists a positive constant $C>0$ such that
 \begin{equation}\label{iso-GDG}
   |E|^{\frac{Q-1}{Q}}\leq Cr|B(x_0,r)|^{-\frac{1}{Q}}P_{X}(E;B(x_0,r))
 \end{equation}
holds for every $C^{1}$ open set $E\subset \overline{E}\subset B(x_0,r)$, where  $R_0$ is a positive constant as mentioned above. Later, Garofalo-Nhieu \cite[Theorem 1.18]{Garofalo1996} extended
\eqref{iso-GDG} to general X-PS domains, which shows that for any X-PS domain $\Omega\subset \mathbb{R}^n$,
\begin{equation}\label{iso-G-D}
  \min(|E\cap \Omega|, |E^{c}\cap\Omega|)^{\frac{Q-1}{Q}}\leq C\text{diam}(\Omega)|\Omega|^{-\frac{1}{Q}}P_{X}(E;\Omega)
\end{equation}
holds for any $X$-Caccioppoli set $E\subset \mathbb{R}^n$.  It is worth  mentioning that the isoperimetric exponent $\frac{Q-1}{Q}$
in both \eqref{iso-GDG} and \eqref{iso-G-D} depend on the local homogeneous dimension $Q$. However,  according to Theorem \ref{thm1}, we can derive a new type of isoperimetric inequality equipped with isoperimetric exponent $\frac{\tilde{\nu}-1}{\tilde{\nu}}$, depending only on the generalized M\'{e}tivier index $\tilde{\nu}$, rather than being dependent on the  local homogeneous dimension $Q$. For more results related to the isoperimetric inequality, one can refer to \cite{Capogna-Danielli-Pauls2007,Chavel2001,Croke1980,Capogna1994-iso,Biroli1995,Monti2004-iso} and the references therein.

\begin{theorem}[Isoperimetric inequality]
\label{thm3}
Assume $X=(X_1,X_{2},\ldots,X_m)$ satisfy condition (H), and $\Omega\subset\subset U$ is a bounded open subset of $U$. Then, there exists a positive constant $C>0$ such that for any bounded open set $E\subset\subset \Omega$ with $C^{1}$ boundary $\partial E$, we have
\begin{equation}\label{1-14}
 |E|^{\frac{\tilde{\nu}-1}{\tilde{\nu}}}\leq CP_{X}(E;\Omega).
\end{equation}
\end{theorem}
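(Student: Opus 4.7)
My plan is to derive the isoperimetric inequality by approximating $\chi_E$ with smooth, compactly supported functions and invoking the endpoint $L^{1}$ case of Theorem \ref{thm1}. Applying Theorem \ref{thm1}(1) with $k=1$ and $p=1$ gives
\begin{equation*}
\|u\|_{L^{\tilde\nu/(\tilde\nu-1)}(\Omega)} \leq C\sum_{j=1}^{m}\|X_{j}u\|_{L^{1}(\Omega)}, \qquad \forall u\in \mathcal{W}^{1,1}_{X,0}(\Omega).
\end{equation*}
If $\chi_E$ could be inserted directly, the left side would equal $|E|^{(\tilde\nu-1)/\tilde\nu}$ and the right side would equal $P_X(E;\Omega)$ by the definition \eqref{def-px} together with the identity ${\rm Var}_X(u;\Omega)=\|Xu\|_{L^1(\Omega)}$ for smooth $u$. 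The remaining task is to carry out a regularization preserving both identifications in the limit.

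Since $E\subset\subset\Omega$ is open with $C^{1}$ boundary, pick a cutoff $\eta\in C_{0}^{\infty}(\Omega)$ with $\eta\equiv 1$ on a neighborhood of $\overline E$, and set $u_{\epsilon}=\eta\,(\chi_E*\rho_{\epsilon})\in C_{0}^{\infty}(\Omega)$, where $\rho_{\epsilon}$ is a standard Euclidean mollifier and $\epsilon>0$ is sufficiently small. Dominated convergence yields $u_{\epsilon}\to\chi_E$ in $L^{1}(\Omega)$ and a.e., and Fatou's lemma gives
\begin{equation*}
|E|^{(\tilde\nu-1)/\tilde\nu} = \|\chi_E\|_{L^{\tilde\nu/(\tilde\nu-1)}(\Omega)} \leq \liminf_{\epsilon\to 0}\|u_{\epsilon}\|_{L^{\tilde\nu/(\tilde\nu-1)}(\Omega)}.
\end{equation*}
Applying the Sobolev inequality above to $u_{\epsilon}$ therefore reduces the proof to establishing the variational upper bound
\begin{equation*}
\limsup_{\epsilon\to 0}\|Xu_{\epsilon}\|_{L^{1}(\Omega)}\leq P_X(E;\Omega).
\end{equation*}

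For this step I would test $u_{\epsilon}$ against arbitrary $\varphi\in\mathcal{F}(\Omega;\mathbb{R}^m)$ and move the mollification onto $\varphi$. Since each $X_{j}$ has smooth coefficients, $X_{j}(\chi_E*\rho_{\epsilon})$ differs from $(X_{j}\chi_E)*\rho_{\epsilon}$ by a commutator whose $L^{1}$-norm tends to zero by a Friedrichs commutator lemma adapted to smooth-coefficient first-order operators. Consequently $\int_{\Omega}u_{\epsilon}\sum_{j}X_{j}^{*}\varphi_{j}\,dx\to\int_{E}\sum_{j}X_{j}^{*}\varphi_{j}\,dx$ uniformly in $\varphi\in\mathcal{F}(\Omega;\mathbb{R}^m)$, and taking the supremum over $\varphi$ yields the required bound, completing the proof of \eqref{1-14}. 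The main technical hurdle is precisely this $L^{1}$ commutator estimate between mollification and the variable-coefficient fields $X_{j}$; the $C^{1}$ regularity of $\partial E$ together with the compact containment $\overline E\subset\Omega$ are what make the boundary and support issues tractable, after which the argument collapses cleanly onto Theorem \ref{thm1}(1).
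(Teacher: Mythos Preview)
Your overall strategy is sound and takes a genuinely different route from the paper. The paper does not mollify $\chi_E$; instead it builds Lipschitz approximants $u_\delta(x)=(1-r(x)/\delta)_+$ with $r(x)=\mathrm{dist}(x,\overline E)$, applies the $p=1$ case of Theorem~\ref{thm1} to $u_\delta$, and then uses the Euclidean co-area formula together with Monti's boundary integral representation $P_X(E;\Omega)=\int_{\partial E}\big(\sum_j\langle X_jI,\eta\rangle^2\big)^{1/2}\,dH_{n-1}$ to pass to the limit $\delta\to 0$. Your mollification approach bypasses both the co-area formula and the boundary representation, replacing them with a BV-type approximation argument; this is closer to the classical Euclidean proof and arguably more self-contained, though it trades geometric transparency for a commutator computation.

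There is, however, one step where your argument as written does not close. The claimed uniform convergence $\int_\Omega u_\epsilon\sum_j X_j^*\varphi_j\,dx\to\int_E\sum_j X_j^*\varphi_j\,dx$ over $\varphi\in\mathcal F(\Omega;\mathbb R^m)$ is false: it would force $(X\chi_E)*\rho_\epsilon\to X\chi_E$ in total variation of vector measures, which cannot hold since the left side is absolutely continuous while the right side is singular on $\partial E$. Fortunately you only need the one-sided bound $\limsup_\epsilon\|Xu_\epsilon\|_{L^1}\le P_X(E;\Omega)$, and this \emph{does} follow from your commutator estimate. From $X_ju_\epsilon=(X_j\chi_E)*\rho_\epsilon+[X_j,\rho_\epsilon*]\chi_E$ and the pointwise bound $|(X\chi_E)*\rho_\epsilon|\le|X\chi_E|*\rho_\epsilon$ (polar decomposition of the vector measure $X\chi_E$, which has finite total variation since $\partial E$ is $C^1$ and the coefficients of $X_j$ are bounded on $\overline\Omega$), one gets directly
\[
\int_\Omega|Xu_\epsilon|\,dx\le\int_\Omega|X\chi_E|*\rho_\epsilon\,dx+o(1)=|X\chi_E|(\Omega)+o(1)=P_X(E;\Omega)+o(1).
\]
Equivalently, moving the mollifier onto $\varphi$ produces $\widetilde\varphi=\varphi*\check\rho_\epsilon$ which, by Minkowski's integral inequality, still lies in $\mathcal F(\Omega;\mathbb R^m)$ for small $\epsilon$, giving the same bound without any uniform-convergence claim. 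With this correction your argument is complete.
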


The logarithmic Sobolev inequality, originally introduced by Gross \cite{Gross1975} in Euclidean space with the Gaussian measure, is closely related to many important properties of the corresponding Markov semigroup.
According to Theorem \ref{thm1} and  \cite{Davies1990}, we can construct the following logarithmic Sobolev inequalities in the context of H\"{o}rmander vector fields. More results on logarithmic Sobolev inequalities can be found in \cite{Gordina2022,Ruzhansky2021,Guionnet2003}.

\begin{theorem}[Logarithmic Sobolev inequalities]
\label{thm4}
Suppose that $X=(X_1,X_{2},\ldots,X_m)$ and $\Omega$ satisfy the assumptions of Theorem \ref{thm1}. Then
\begin{enumerate}[(1)]
  \item For any $\varepsilon>0$ and $u\in \mathcal{H}_{X,0}^1(\Omega)\cap L^\infty(\Omega)$, we have
\begin{equation}\label{1-15}
\int_{\Omega}|u|^2\ln |u|dx\leq \varepsilon\|Xu\|_{L^2(\Omega)}^2+M(\varepsilon)\|u\|_{L^2(\Omega)}^2+\|u\|_{L^2(\Omega)}^2\ln\|u\|_{L^2(\Omega)},
\end{equation}
where $M(\varepsilon)=\ln C_0-\frac{\tilde{\nu}}{4}\ln \varepsilon$, and $C_0>0$ is a positive constant defined in \eqref{4-21} blow.
  \item Suppose that $p\geq1$,  $k\in \mathbb{N}^{+}$ with $kp\leq \tilde{\nu}$, then for any $u\in \mathcal{W}^{k,p}_{X,0}(\Omega)$ with $\|u\|_{L^p(\Omega)}=1$, we have
\begin{equation}\label{1-16}
\frac{C}{e}\sum_{|J|=k}\|X^J u\|_{L^p(\Omega)}\geq \ln\left( C\sum_{|J|=k}\|X^J u\|_{L^p(\Omega)}\right)\geq \frac{k}{\tilde{\nu}}\int_{\Omega}|u|^p\ln|u|^pdx,
\end{equation}
  where $C>0$ is the positive constant in Theorem \ref{thm1}.
\end{enumerate}
\end{theorem}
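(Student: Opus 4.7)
The plan is to derive both inequalities from the Sobolev embedding of Theorem \ref{thm1} via a Jensen-type argument, which is the classical Davies--Gross recipe for upgrading an $L^{q}$-Sobolev bound into a logarithmic Sobolev bound, and then to use the elementary scalar inequality $\ln t\le t/e$ (valid for every $t>0$) both to insert the free parameter $\varepsilon$ in part (1) and to produce the leftmost estimate in part (2).

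For part (2), the hypothesis $\|u\|_{L^{p}(\Omega)}=1$ makes $|u|^{p}dx$ a probability measure on $\Omega$, and Jensen's inequality applied with the convex function $\exp$ to the test function $g=(q-p)\ln|u|$ (for any $q>p$) yields
\begin{equation*}
\exp\!\left(\frac{q-p}{p}\int_{\Omega}|u|^{p}\ln|u|^{p}\,dx\right) \le \int_{\Omega} e^{g}|u|^{p}\,dx = \|u\|_{L^{q}(\Omega)}^{q},
\end{equation*}
so $\int_{\Omega}|u|^{p}\ln|u|^{p}\,dx \le \frac{pq}{q-p}\ln\|u\|_{L^{q}(\Omega)}$. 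In the regime $kp<\tilde{\nu}$ I would pick exactly $q=\frac{p\tilde{\nu}}{\tilde{\nu}-kp}$, for which $\frac{pq}{q-p}=\frac{\tilde{\nu}}{k}$; Theorem \ref{thm1}(1) then supplies $\|u\|_{L^{q}(\Omega)}\le C\sum_{|J|=k}\|X^{J}u\|_{L^{p}(\Omega)}$, and chaining the two estimates delivers the right-hand inequality of \eqref{1-16}. The left-hand inequality is just $\ln t\le t/e$ applied with $t=C\sum_{|J|=k}\|X^{J}u\|_{L^{p}(\Omega)}$. The endpoint $kp=\tilde{\nu}$ I would handle with a finite $q>p$ via Theorem \ref{thm1}(2); this is the main delicate point, since the natural Sobolev exponent degenerates to infinity and one must absorb the loss $\frac{pq}{q-p}>p$ into the constant.

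For part (1), apply the same Jensen identity to $v=u/\|u\|_{L^{2}(\Omega)}$ (so that $\|v\|_{L^{2}(\Omega)}=1$) with $p=2$ and $q=\frac{2\tilde{\nu}}{\tilde{\nu}-2}$, for which $\frac{pq}{q-p}=\tilde{\nu}$. Unwinding the normalization and inserting Theorem \ref{thm1} in the form $\|u\|_{L^{q}(\Omega)}\le C_{1}\|Xu\|_{L^{2}(\Omega)}$ (equivalent to \eqref{1-8} with $k=1$, $p=2$, up to the harmless constant relating $\|Xu\|_{L^{2}}$ to $\sum_{j}\|X_{j}u\|_{L^{2}}$) gives
\begin{equation*}
\int_{\Omega}|u|^{2}\ln|u|\,dx \le \|u\|_{L^{2}(\Omega)}^{2}\ln\|u\|_{L^{2}(\Omega)} + \frac{\tilde{\nu}}{4}\|u\|_{L^{2}(\Omega)}^{2}\ln\!\left(\frac{C_{1}^{2}\|Xu\|_{L^{2}(\Omega)}^{2}}{\|u\|_{L^{2}(\Omega)}^{2}}\right).
\end{equation*}
To manufacture the $\varepsilon\|Xu\|_{L^{2}}^{2}$ term, for each $\lambda>0$ split the last logarithm as $-\ln\lambda + \ln\!\left(\lambda C_{1}^{2}\|Xu\|_{L^{2}}^{2}/\|u\|_{L^{2}}^{2}\right)$ and bound the second summand by $\lambda C_{1}^{2}\|Xu\|_{L^{2}}^{2}/(e\|u\|_{L^{2}}^{2})$ via $\ln z\le z/e$. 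Multiplying through by $\frac{\tilde{\nu}}{4}\|u\|_{L^{2}(\Omega)}^{2}$ and choosing $\lambda=\frac{4e\varepsilon}{\tilde{\nu}C_{1}^{2}}$ forces the $\|Xu\|_{L^{2}}^{2}$-coefficient to be exactly $\varepsilon$, producing \eqref{1-15} with $M(\varepsilon)=\frac{\tilde{\nu}}{4}\ln\!\left(\frac{\tilde{\nu}C_{1}^{2}}{4e\varepsilon}\right)=\ln C_{0}-\frac{\tilde{\nu}}{4}\ln\varepsilon$, and identifying $C_{0}=(\tilde{\nu}C_{1}^{2}/(4e))^{\tilde{\nu}/4}$.

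Both the Jensen step and the insertion of Theorem \ref{thm1} are standard; the only genuine subtlety I anticipate is a clean handling of the critical case $kp=\tilde{\nu}$ in part (2).
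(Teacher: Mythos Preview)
For part~(2) your Jensen argument is exactly the paper's. The one place you are imprecise is the critical case $kp=\tilde\nu$: you cannot ``absorb the loss $\frac{pq}{q-p}>p$ into the constant,'' because the target inequality \eqref{1-16} pins the coefficient on the entropy term at exactly $\frac{k}{\tilde\nu}=\frac{1}{p}$. What the paper does (and what you should do) is take $q=\gamma p$ with $\gamma>1$, note that Jensen gives
\[
\ln\Bigl(C\sum_{|J|=k}\|X^{J}u\|_{L^{p}}\Bigr)\;\ge\;\ln\|u\|_{L^{q}}\;\ge\;\Bigl(\tfrac{1}{p}-\tfrac{1}{q}\Bigr)\!\int_{\Omega}|u|^{p}\ln|u|^{p}\,dx
\;=\;\tfrac{k}{\tilde\nu}\Bigl(1-\tfrac{1}{\gamma}\Bigr)\!\int_{\Omega}|u|^{p}\ln|u|^{p}\,dx,
\]
and then let $\gamma\to\infty$; since the left-hand side is independent of $\gamma$, the limit yields the desired coefficient $\frac{k}{\tilde\nu}$.

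For part~(1) you take a genuinely different route from the paper. The paper does not argue directly; instead it builds the Dirichlet form $\mathcal{Q}(u,v)=\int_{\Omega}Xu\cdot Xv\,dx$ on $\mathcal{H}_{X,0}^{1}(\Omega)$, passes to the associated self-adjoint operator $\mathcal{L}_{\Omega}$ and its Markov semigroup $P_{t}^{\Omega}=e^{-t\mathcal{L}_{\Omega}}$, invokes Davies' theorem (Sobolev $\Rightarrow$ ultracontractivity) to get $\|P_{t}^{\Omega}f\|_{L^{\infty}}\le C_{0}t^{-\tilde\nu/4}\|f\|_{L^{2}}$, and then quotes Davies' converse (ultracontractivity $\Rightarrow$ log-Sobolev) to obtain \eqref{1-15}. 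Thus the paper's $C_{0}$ is, by definition, the ultracontractivity constant in \eqref{4-21}. Your argument is more elementary and entirely self-contained---Jensen plus the scalar bound $\ln z\le z/e$---and it produces the same functional form $M(\varepsilon)=\ln C_{0}-\frac{\tilde\nu}{4}\ln\varepsilon$, but with $C_{0}=(\tilde\nu C_{1}^{2}/(4e))^{\tilde\nu/4}$ expressed directly through the Sobolev constant $C_{1}$ rather than through the heat semigroup. What the paper's detour buys is the identification of $C_{0}$ with a semigroup quantity of independent interest; what your approach buys is that no operator-theoretic machinery (Dirichlet forms, spectral calculus, Markov property) is needed at all.
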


Additionally, we  are concerned with the Sobolev inequality in the case of $p>\frac{\tilde{\nu}}{k}$. To proceed, we introduce the generalized H\"{o}lder spaces associated with the subunit metric $d$. For any $0<\alpha<1$ and $u\in C(\overline{\Omega})$, we
 define
\[[u]_{\alpha}:=\sup_{x\neq y,~x,y\in\Omega}\frac{|u(x)-u(y)|}{d(x,y)^\alpha}.\]
Then, the generalized H\"{o}lder spaces are given by
\[ \mathcal{S}^{0,\alpha}(\overline{\Omega}):=\{u\in C(\overline{\Omega})| [u]_{\alpha}<+\infty\}, \]
and
\[ \mathcal{S}^{k,\alpha}(\overline{\Omega}):=\{u\in C(\overline{\Omega})| X^J u\in \mathcal{S}^{0,\alpha}(\overline{\Omega}) ,~\forall |J|\leq k\}\qquad \mbox{for}~~k\in \mathbb{N}.\]
 Note that $\mathcal{S}^{k,\alpha}(\overline{\Omega})$ is a Banach space equipped with the norm
\[\|u\|_{\mathcal{S}^{k,\alpha}(\overline{\Omega})}:=\sum_{|J|\leq k}\left(\sup_{x\in\Omega}|X^{J} u(x)|+[X^J u]_{\alpha}\right).\]
In particular, if $X=(\partial_{x_{1}},\partial_{x_{2}},\ldots,\partial_{x_{n}})$,  $\mathcal{S}^{k,\alpha}(\overline{\Omega})$ reduces to the classical H\"{o}lder space $C^{k,\alpha}(\overline{\Omega})$. In fact, $\mathcal{S}^{k,\alpha}(\overline{\Omega})$ and $C^{k,\alpha}(\overline{\Omega})$ has the following relationship (see \cite[Theorem 1.53]{Bramanti2022}):
\begin{equation}
  C^{0,\alpha}(\overline{\Omega})\subset \mathcal{S}^{0,\alpha}(\overline{\Omega})\subset C^{0,\frac{\alpha}{s_{0}}}(\overline{\Omega}).
\end{equation}
Besides, for every $k\in \mathbb{N}^{+}$ we have
\begin{equation}
 C^{k,\alpha}(\overline{\Omega})\subset\mathcal{S}^{k,\alpha}(\overline{\Omega}),~~~\mbox{and}~~~ \mathcal{S}^{ks_{0},\alpha}(\overline{\Omega})\subset  C^{k,\frac{\alpha}{s_{0}}}(\overline{\Omega}).
\end{equation}
Here, $s_0$ denotes the H\"{o}rmander index of $U$.

An earlier study by  Garofallo-Nhieu \cite[Theorem 1.10]{Garofallo-Nhieu-1998} yields that, for any $(\varepsilon,\delta)$ domain $\Omega\subset\subset U$,  the Sobolev embedding
\begin{equation}\label{holder-embedding-1998}
  \mathcal{W}_{X}^{1,p}(\Omega)\hookrightarrow \mathcal{S}^{0,1-\frac{Q}{p}}(\overline{\Omega})
\end{equation}
holds for $p>Q$. Our next result indicates that, if we consider the Sobolev embedding for the space $\mathcal{W}_{X,0}^{1,p}(\Omega)$, the assumption of $(\varepsilon,\delta)$ domain can be removed, and the Sobolev exponent can be improved to $1-\frac{\tilde{\nu}}{p}$.

\begin{theorem}[Sobolev inequality for $kp>\tilde{\nu}$]
\label{thm5}
Assume that $X=(X_1,X_{2},\ldots,X_m)$ and $\Omega$ satisfy the assumptions of Theorem \ref{thm1}. Let $k\in \mathbb{N}^{+}$ and $p\geq 1$ be a positive number such that $p>\frac{\tilde{\nu}}{k}$. Then there exists a positive constant $C>0$ such that
\begin{equation}\label{holder-embedding}
  \|u\|_{\mathcal{S}^{k-\left[\frac{\tilde{\nu}}{p}\right]-1,\alpha}(\overline{\Omega})}\leq
C\sum_{|J|=k}\|X^J u\|_{L^p(\Omega)}\qquad\forall u\in \mathcal{W}_{X,0}^{k,p}(\Omega),
\end{equation}
where
\begin{equation*}
\alpha=\left\{
            \begin{array}{ll}
              \left[\frac{\tilde{\nu}}{p}\right]+1-\frac{\tilde{\nu}}{p}, & \hbox{if $\frac{\tilde{\nu}}{p}\notin \mathbb{N}$;} \\[2mm]
              \text{any real number in $(0,1)$},  & \hbox{if $\frac{\tilde{\nu}}{p}\in \mathbb{N}$,}
            \end{array}
          \right.
\end{equation*}
\end{theorem}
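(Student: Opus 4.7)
The plan is to establish the statement by first proving a Morrey-type inequality in the base case $k=1$, $p>\tilde\nu$, and then extending to general $(k,p)$ with $kp>\tilde\nu$ by iterating the subcritical Sobolev embedding of Theorem \ref{thm1}. For the base case I would invoke the representation formula of Proposition \ref{prop3-2} to write, for $u\in C_0^\infty(\Omega)$,
$$u(x)=\sum_{|J|=1}\int_{\Omega}K_J(x,y)\,X^J u(y)\,dy,$$
where each $K_J$ is a ``type-$1$'' kernel obtained by saturating one horizontal derivative of the lifted parametrix. The two pointwise bounds needed are the size estimate $|K_J(x,y)|\leq C\,d(x,y)/|B(x,d(x,y))|$ and the regularity estimate $|K_J(x,y)-K_J(x',y)|\leq C\,d(x,x')/|B(x,d(x,y))|$ whenever $d(x,y)\geq 2\,d(x,x')$. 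Combining these with M\'etivier's lower bound $|B(x,r)|\geq cr^{\tilde\nu}$, H\"older's inequality, and splitting the $y$-integration at the scale $\rho:=d(x,x')$ (size estimate on $d(x,y)\leq 2\rho$, regularity estimate on $d(x,y)>2\rho$), a dyadic computation using only the lower volume bound and doubling of CC balls gives
$$|u(x)-u(x')|\leq C\,\rho^{1-\tilde\nu/p}\sum_{|J|=1}\|X^J u\|_{L^p(\Omega)},\qquad\|u\|_{L^\infty(\Omega)}\leq C\sum_{|J|=1}\|X^J u\|_{L^p(\Omega)},$$
and density of $C_0^\infty(\Omega)$ in $\mathcal{W}_{X,0}^{1,p}(\Omega)$ extends these to the full space.

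For the iteration, set $\ell:=[\tilde\nu/p]$; since $kp>\tilde\nu$ gives $k\geq\ell+1$, fix a multi-index $J$ with $|J|\leq k-\ell-1$ and put $v:=X^J u$. By density of test functions, $v\in\mathcal{W}_{X,0}^{\ell+1,p}(\Omega)$ and $X^I v\in\mathcal{W}_{X,0}^{\ell,p}(\Omega)$ for every $|I|=1$. In the non-integer case $\tilde\nu/p\notin\mathbb N$ one has $\ell p<\tilde\nu$, so Theorem \ref{thm1}(1) applied to both $v$ and $X^I v$, followed by the degenerate Friedrichs-Poincar\'e inequality to absorb lower-order derivatives, yields $\|v\|_{L^q(\Omega)}+\|X^I v\|_{L^q(\Omega)}\lesssim\sum_{|K|=k}\|X^K u\|_{L^p(\Omega)}$ with $1/q=1/p-\ell/\tilde\nu$. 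A direct check shows $q>\tilde\nu$ and $1-\tilde\nu/q=\ell+1-\tilde\nu/p=\alpha$, so the base case applied to $v\in\mathcal{W}_{X,0}^{1,q}(\Omega)$ delivers $X^J u=v\in\mathcal{S}^{0,\alpha}(\overline\Omega)$ with the asserted bound. In the integer case $\tilde\nu/p=\ell\in\mathbb N$, Theorem \ref{thm1}(2) instead supplies the analogous $L^q$ estimate for every finite $q$; choosing $q$ so that $1-\tilde\nu/q$ equals any prescribed $\alpha\in(0,1)$ closes that case. Summing the resulting bounds over $|J|\leq k-\ell-1$ yields the desired inequality.

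The delicate part is the base case, specifically securing the two pointwise bounds on the kernels $K_J$. These rest on the Rothschild-Stein lifting and saturation procedure used in Proposition \ref{prop3-2}, together with first-order differentiability of the lifted parametrix (whose natural order is $2$, so one horizontal derivative does produce a type-$1$ kernel) and the Nagel-Stein-Wainger-style local volume comparisons needed to descend the weighted bounds from the lifted group to $U$. Once these kernel estimates and the doubling property of CC balls are in hand, M\'etivier's inequality makes the Morrey step essentially mechanical, and the iteration via Theorem \ref{thm1} is purely formal.
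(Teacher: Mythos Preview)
Your iteration step is essentially the paper's, but your base case ($k=1$, $p>\tilde\nu$) takes a genuinely different route. You propose a single representation formula $u(x)=\sum_{|J|=1}\int K_J(x,y)X^Ju(y)\,dy$ and then extract the H\"older seminorm from a \emph{regularity} estimate on $K_J$ in the $x$-variable. The paper instead uses two separate representation formulas: the Type~II formula (Proposition~\ref{prop3-2}) for the sup bound $|u(x)|\leq C\|Xu\|_{L^p}$, and the Type~I formula with ball averages (Proposition~\ref{prop3-1}) for the oscillation, writing $|u(x)-u(y)|\leq|u(x)-u_{B}|+|u(y)-u_{B}|$ with $B$ a subunit ball of radius $\sim d(x,y)$, then applying H\"older and the integral estimate of Proposition~\ref{prop2-5} over that small ball. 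The paper's approach is more economical: it never needs kernel regularity in $x$, only the size bound $|K|\leq C\,d(x,y)/|B(x,d(x,y))|$, which is exactly what Proposition~\ref{prop3-2} delivers. Your approach would in principle work, since such smoothness estimates for type-$1$ kernels are standard in the Rothschild--Stein framework, but establishing the bound $|K_J(x,y)-K_J(x',y)|\leq C\,d(x,x')/|B(x,d(x,y))|$ after saturation requires carrying derivative estimates through the lifting and the Nagel--Stein--Wainger volume comparison, none of which the paper develops.

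Two smaller points. First, Proposition~\ref{prop3-2} is an inequality, not an identity, and the underlying parametrix decomposition (see \eqref{3-9}) includes a zero-order term $T_0u$; your displayed formula drops it. This is harmless (absorb it via Proposition~\ref{prop2-6}), but it should be said. Second, in your integer case $\tilde\nu/p=\ell$, Theorem~\ref{thm1}(2) is stated for $kp=\tilde\nu$, so strictly speaking you apply Theorem~\ref{thm1}(1) with $(\ell-1)$ steps to reach $L^{\tilde\nu}$ and then Theorem~\ref{thm1}(2) once more, exactly as the paper does in \eqref{4-15}; your sketch is correct but the bookkeeping is slightly different from what you wrote.
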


\begin{remark}
According to Theorem \ref{thm1} and Theorem \ref{thm6}, we conclude that for $k\geq 1$,
\begin{equation}\label{1-20}
  \mathcal{W}_{X,0}^{k,p}(\Omega)\hookrightarrow \left\{
                                                   \begin{array}{ll}
                                                     L^{q}(\Omega) , & \hbox{if $1\leq p<\frac{\tilde{\nu}}{k}$ and $1\leq q\leq \frac{\tilde{\nu}p}{\tilde{\nu}-pk}$;} \\[2mm]

                                                     L^{q}(\Omega), & \hbox{if $p=\frac{\tilde{\nu}}{k}$ and $1\leq q<\infty$;}\\[2mm] \mathcal{S}^{k-\left[\frac{\tilde{\nu}}{p}\right]-1,\alpha}(\overline{\Omega}), & \hbox{if $p\geq 1$ and $p>\frac{\tilde{\nu}}{k}$.}
                                                   \end{array}
                                                 \right.
\end{equation}
In particular, for $k=1$ we have
\begin{equation}\label{1-21}
  \mathcal{W}_{X,0}^{1,p}(\Omega)\hookrightarrow   \left\{
                                                   \begin{array}{ll}
                                                     L^{q}(\Omega) , & \hbox{if $1\leq p<\tilde{\nu}$ and $1\leq q\leq \frac{\tilde{\nu}p}{\tilde{\nu}-p}$;} \\[2mm]

                                                     L^{q}(\Omega), & \hbox{if $p=\tilde{\nu}$ and $1\leq q<\infty$;}\\[2mm] \mathcal{S}^{0,\alpha}(\overline{\Omega}), & \hbox{if $p\geq 1$ and $p>\tilde{\nu}$.}
                                                   \end{array}
                                                 \right.
\end{equation}

As a consequence of \eqref{1-21}, we see that the Sobolev embedding \eqref{1-6} indeed holds for any bounded open set $\Omega\subset\subset U$. According to \eqref{2-6} and Example \ref{example2-1} below, we find that $\tilde{\nu}\leq Q$, and $\tilde{\nu}<Q$ occurs in some degenerate cases. Therefore, our embedding result \eqref{1-21}  genuinely improves upon \eqref{1-4}, as established by Capogna-Danielli-Garofalo \cite{Capogna1993,Capogna1994}.
\end{remark}

Meanwhile, we can obtain the following degenerate Rellich-Kondrachov compact embedding theorem, which generalizes our previous result \cite[Proposition 2.7]{chen-chen-yuan2022} and  may shed light on the study of degenerate equations and subelliptic variational problems.

\begin{theorem}[Rellich-Kondrachov compact embedding theorem]
\label{thm6}
Let $X=(X_1,X_{2},\ldots,X_m)$ satisfy condition (H). Suppose that $\Omega\subset\subset U$ is a bounded open subset of $U$. Then, for $p\geq 1$, $k\in \mathbb{N}^{+}$ with $kp<\tilde{\nu}$, the embedding
\begin{equation}
  \mathcal{W}_{X,0}^{k,p}(\Omega)\hookrightarrow L^{s}(\Omega)
\end{equation}
is compact for $1\leq s<\frac{\tilde{\nu}p}{\tilde{\nu}-kp}$.
\end{theorem}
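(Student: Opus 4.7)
The plan is to reduce to the case $s=p$ via H\"older interpolation with the Sobolev embedding of Theorem \ref{thm1}, and then establish $L^p$-precompactness of bounded sequences directly by the Fr\'echet--Kolmogorov criterion. For the interpolation step, fix any $s \in [1, p^*)$ with $p^* := \tilde{\nu}p/(\tilde{\nu}-kp)$, and choose $\theta \in (0,1]$ satisfying $\frac{1}{s} = \frac{\theta}{p} + \frac{1-\theta}{p^*}$. For all $u,v \in \mathcal{W}_{X,0}^{k,p}(\Omega)$ we then have
$$\|u-v\|_{L^s(\Omega)} \le \|u-v\|_{L^p(\Omega)}^{\theta}\,\|u-v\|_{L^{p^*}(\Omega)}^{1-\theta}.$$
By Theorem \ref{thm1} the second factor is controlled by $\|u-v\|_{\mathcal{W}_{X,0}^{k,p}(\Omega)}^{1-\theta}$, hence stays bounded on bounded subsets of $\mathcal{W}_{X,0}^{k,p}(\Omega)$. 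Consequently, any sequence that is Cauchy in $L^p(\Omega)$ is also Cauchy in $L^s(\Omega)$, and it suffices to prove compactness of the embedding into $L^p(\Omega)$.

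For the $L^p$ compactness, take a bounded sequence $\{u_n\} \subset \mathcal{W}_{X,0}^{k,p}(\Omega)$ and extend each $u_n$ by zero to $\mathbb{R}^n$, which is legitimate since $\mathcal{W}_{X,0}^{k,p}(\Omega)$ is the closure of $C_0^\infty(\Omega)$. By the Fr\'echet--Kolmogorov theorem, it suffices to verify
$$\sup_n \|u_n(\cdot + h) - u_n(\cdot)\|_{L^p(\mathbb{R}^n)} \longrightarrow 0 \qquad \text{as}\ h \to 0.$$
Since $\mathcal{W}_{X,0}^{k,p}(\Omega) \subset \mathcal{W}_{X,0}^{1,p}(\Omega)$ for $k\geq 1$, this reduces to proving, for every $u \in C_0^\infty(\Omega)$ and all sufficiently small $h$, an estimate of the form
$$\|u(\cdot+h)-u(\cdot)\|_{L^p(\mathbb{R}^n)} \le C\,|h|^{1/s_0}\bigl(\|Xu\|_{L^p(\Omega)} + \|u\|_{L^p(\Omega)}\bigr),$$
where $s_0$ is the H\"ormander index. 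The natural route is to use the representation formula of Proposition \ref{prop3-2}: writing $u(x+h)-u(x)$ through the kernel representation and splitting the resulting integral according to whether $d(y,x),\,d(y,x+h)$ lie above or below the scale $\lambda \asymp |h|^{1/s_0}$, one controls the near-diagonal part by the Nagel--Stein--Wainger comparison $d(x,x+h) \lesssim |h|^{1/s_0}$ together with the weighted weak-$L^p$ kernel estimates already invoked for Theorem \ref{thm1}, and the far part by cancellation of the singular kernel along Euclidean translations.

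The main obstacle is precisely this uniform Euclidean translation estimate: the subunit metric $d$ degenerates with respect to the Euclidean metric in non-elliptic directions, so a Euclidean displacement $|h|$ can correspond to a subunit displacement as large as $|h|^{1/s_0}$, which is the source of the fractional H\"older exponent and forces a careful tracking of the interplay between the two metrics. Once this estimate is available, Fr\'echet--Kolmogorov extracts a subsequence convergent in $L^p(\Omega)$; the H\"older interpolation from the first paragraph then upgrades the convergence to $L^s(\Omega)$ for every $s \in [1, p^*)$, which completes the proof.
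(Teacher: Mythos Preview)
Your overall strategy is sound and runs parallel to the paper's: both reduce to showing that bounded sets in $\mathcal{W}_{X,0}^{k,p}(\Omega)$ are $L^p$-precompact, then upgrade to all $L^s$ with $s<p^*$ via the Sobolev embedding of Theorem~\ref{thm1}. The paper takes a much shorter path at both stages. For the upgrade it invokes an abstract Rellich--Kondrachov lemma of Haj{\l}asz--Koskela (Proposition~\ref{prop4-1}): a bounded embedding $G\hookrightarrow L^q$ together with almost-everywhere convergence of subsequences already yields compactness into every $L^s$, $s<q$. For the $L^p$-precompactness itself the paper simply cites Danielli's known compact embedding $\mathcal{W}_{X,0}^{1,p}(\Omega)\hookrightarrow L^p(\Omega)$, extracts an $L^p$-convergent subsequence, and passes to a further a.e.-convergent subsequence by Riesz.

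Your self-contained Fr\'echet--Kolmogorov route is a legitimate alternative to citing Danielli, but the sketch of the translation estimate has a genuine gap. Proposition~\ref{prop3-2} is a \emph{pointwise inequality} $|u(x)|\le C\int K(x,y)(|Xu|+|u|)\,dy$, not a representation identity; you cannot subtract two such inequalities at $x$ and $x+h$ to produce the kernel difference $K(x+h,y)-K(x,y)$ and then exploit ``cancellation of the singular kernel along Euclidean translations''. To make that idea work you would have to return to the exact operator identity in the lifted space (the relation $au=\sum_j T_j\widetilde{X_j}u+T_0u$ of Lemma~\ref{lemma3-1}), prove Euclidean H\"older regularity of those type-$1$ kernels, and then push the estimate down by saturation---essentially redoing Danielli's argument from scratch. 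The cleaner repairs are either to invoke Danielli directly, as the paper does, or to quote the Rothschild--Stein $L^p$ subelliptic estimate giving $\mathcal{W}_{X}^{1,p}\subset W^{1/s_0,p}_{\rm loc}$, from which the translation bound and hence Fr\'echet--Kolmogorov follow at once.
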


Finally, we can also present the Moser-Trudinger inequality for $\mathcal{W}^{1,\tilde{\nu}}_{X,0}(\Omega)$.  It is noteworthy that the corresponding Moser-Trudinger inequality for the Sobolev space $\mathcal{W}^{1,Q}_{X}(\Omega)$ associated with the X-PS domain $\Omega\subset\subset U$ was established in \cite{Garofalo1996}. Additionally, the Moser-Trudinger inequality on Carnot groups has been explored in \cite{Lu-Cohn2001,Lam2014,Balogh2003}, among others.

\begin{theorem}[Moser-Trudinger inequality]
\label{thm7}
Suppose $X=(X_1,X_{2},\ldots,X_m)$ and $\Omega$ satisfy the assumptions of Theorem \ref{thm1}. Moreover, assume that for every $x\in U$ and $r>0$, the subunit ball $B(x,r)$ admits finite volume, i.e.,
\begin{equation}
  |B(x,r)|<+\infty\qquad \forall x\in U,~r>0.  \tag{A}
\end{equation}
 Then for any
\[ 0<\sigma<\frac{\tilde{\nu}-1}{e\tilde{\nu}}C_{3}^{-\frac{\tilde{\nu}+1}{\tilde{\nu}-1}}\left( \frac{C_{1}}{4C_{0}(C_{1}+C_{2})}\right)^{\frac{\tilde{\nu}}{\tilde{\nu}-1}}\cdot\left(\frac{\lambda_{1}(\tilde{\nu})CC_{1}}{\lambda_{1}(\tilde{\nu})+1} \right)^{\frac{1}{\tilde{\nu}-1}}, \]
there exists $\widetilde{C}>0$ such that for any $u\in \mathcal{W}^{1,\tilde{\nu}}_{X,0}(\Omega)$ with $\|Xu\|_{L^{\tilde{\nu}}(\Omega)}\leq 1$, we have
\begin{equation}
 \int_{\Omega}e^{\sigma|u|^{\frac{\tilde{\nu}}{\tilde{\nu}-1}}}dx\leq \widetilde{C}|\Omega|.
\end{equation}
Here, $C_{0}>0$ is a positive constant given by \eqref{4-33}, $C,C_{1},C_{2},C_{3}$ are the positive constants appeared in Proposition \ref{prop2-2}-Proposition \ref{prop2-4}, and
\[ \lambda_{1}(\tilde{\nu}):=\inf_{u\in \mathcal{W}_{X,0}^{1,\tilde{\nu}}(\Omega),~u\neq 0}\frac{\int_{\Omega}|Xu|^{\tilde{\nu}}dx}{\int_{\Omega}|u|^{\tilde{\nu}}dx}>0.\]
\end{theorem}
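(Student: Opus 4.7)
}
The strategy is the classical Moser series expansion, combined with an iterated application of the subcritical Sobolev inequality from Theorem~\ref{thm1}(1) in order to track the precise dependence of the $L^{p}$-Sobolev constant on $p$. Setting $q=\tilde{\nu}/(\tilde{\nu}-1)$ and expanding the exponential formally,
\begin{equation*}
\int_{\Omega}e^{\sigma|u|^{q}}\,dx=|\Omega|+\sum_{k=1}^{\infty}\frac{\sigma^{k}}{k!}\int_{\Omega}|u|^{qk}\,dx,
\end{equation*}
so the problem reduces to estimating $\|u\|_{L^{qk}(\Omega)}$ with a sharp control of the constant as $k\to\infty$. Condition (A) guarantees that the volume estimates needed below are uniform, while Poincar\'e's inequality (encoded through the factor $\lambda_{1}(\tilde{\nu})/(\lambda_{1}(\tilde{\nu})+1)$) allows us to pass freely between $\|Xu\|_{L^{\tilde{\nu}}}$ and the full $\mathcal{W}_{X,0}^{1,\tilde{\nu}}$-norm.

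The core analytic step is to establish a Moser-type polynomial Sobolev estimate of the form
\begin{equation*}
\|u\|_{L^{p}(\Omega)}\leq A\,p^{(\tilde{\nu}-1)/\tilde{\nu}}\,\|Xu\|_{L^{\tilde{\nu}}(\Omega)}\qquad\text{for all }p\geq\tilde{\nu},
\end{equation*}
where the constant $A$ is made explicit in terms of $C,C_{1},C_{2},C_{3},C_{0}$ and $\lambda_{1}(\tilde{\nu})$. To obtain this, I would apply Theorem~\ref{thm1}(1) (with $k=1$ and a subcritical exponent $p'<\tilde{\nu}$) to the function $|u|^{\beta}$, combined with the Leibniz rule $X(|u|^{\beta})=\beta|u|^{\beta-1}Xu$ and H\"older's inequality, yielding a recursion
\begin{equation*}
\|u\|_{L^{\beta q'}(\Omega)}\leq(C\beta)^{1/\beta}\|u\|_{L^{(\beta-1)q'}(\Omega)}^{(\beta-1)/\beta}\|Xu\|_{L^{\tilde{\nu}}(\Omega)}^{1/\beta},
\end{equation*}
with $q'=p'\tilde{\nu}/(\tilde{\nu}-p')$. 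Iterating this recursion along a suitable geometric sequence of exponents $\beta_{n}$ (and using Proposition~\ref{prop2-2}--Proposition~\ref{prop2-4} at each step to control the Sobolev constants through subunit ball volumes) produces the desired polynomial bound in $p$, with $A$ explicitly tied to the constants appearing in the statement.

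With this polynomial Sobolev estimate in hand, the remainder is straightforward. Assuming $\|Xu\|_{L^{\tilde{\nu}}(\Omega)}\leq 1$, substitute into the series to obtain
\begin{equation*}
\frac{\sigma^{k}}{k!}\int_{\Omega}|u|^{qk}\,dx\leq\frac{\sigma^{k}A^{qk}(qk)^{k}}{k!}\leq\frac{(\sigma A^{q}qe)^{k}}{\sqrt{2\pi k}},
\end{equation*}
where I have used Stirling's estimate $k!\geq\sqrt{2\pi k}(k/e)^{k}$. The geometric series on the right converges provided $\sigma A^{q}qe<1$, i.e.,
\begin{equation*}
\sigma<\frac{\tilde{\nu}-1}{e\tilde{\nu}}\cdot A^{-\tilde{\nu}/(\tilde{\nu}-1)},
\end{equation*}
and matching $A$ to the explicit form coming from the iteration gives precisely the threshold stated in the theorem, with the resulting sum dominated by $\widetilde{C}|\Omega|$ for some $\widetilde{C}>0$ depending only on $\sigma$ and the listed constants.

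I expect the principal difficulty to lie not in the abstract Moser iteration, which is by now standard, but in the bookkeeping: to recover the \emph{exact} constants $C_{3}^{-(\tilde{\nu}+1)/(\tilde{\nu}-1)}$, $(C_{1}/(4C_{0}(C_{1}+C_{2})))^{\tilde{\nu}/(\tilde{\nu}-1)}$ and $(\lambda_{1}(\tilde{\nu})CC_{1}/(\lambda_{1}(\tilde{\nu})+1))^{1/(\tilde{\nu}-1)}$ that appear in the admissible range of $\sigma$, one must thread the volume-doubling estimates of Proposition~\ref{prop2-2}--Proposition~\ref{prop2-4}, the weighted weak-$L^{p}$ bounds underlying Theorem~\ref{thm1}, and the Poincar\'e factor through every step of the iteration. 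Keeping the chain of constants sharp, and in particular ensuring that the prefactor $(\tilde{\nu}-1)/(e\tilde{\nu})$ emerges correctly rather than being absorbed into a larger multiplicative loss, is the main technical hurdle.
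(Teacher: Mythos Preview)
Your overall architecture---expand the exponential as a power series and feed in an $L^{p}$ estimate with polynomial growth $p^{1-1/\tilde\nu}$---matches the paper. Where you diverge is in how you propose to obtain that $L^{p}$ estimate: you plan a Moser iteration, applying Theorem~\ref{thm1}(1) to $|u|^{\beta}$ and climbing up a ladder of exponents. The paper does \emph{not} iterate. Instead it derives
\[
\|u\|_{L^{p}(\Omega)}\le C_{4}\,p^{1-1/\tilde\nu}\,|\Omega|^{1/p}
\]
in a single shot, by duality: writing $\|u\|_{L^{p}}=\sup_{\|v\|_{L^{q}}\le 1}\int_{\Omega}|uv|$, inserting the representation formula of Proposition~\ref{prop3-2} for $u$, and applying a three-factor H\"older inequality with carefully chosen exponents so that the two resulting integrals of powers of $d(x,y)/|B(x,d(x,y))|$ are handled directly by Proposition~\ref{prop2-5} (this is where assumption~(A) and the constants $C,C_{1},C_{2},C_{3}$ enter). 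The Poincar\'e constant $\lambda_{1}(\tilde\nu)$ enters only at the very end, converting $\|u\|_{\mathcal{W}^{1,\tilde\nu}_{X,0}}$ back to $\|Xu\|_{L^{\tilde\nu}}$.

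This difference is not cosmetic. Your iteration uses Theorem~\ref{thm1}(1) as a black box, but the constant in Theorem~\ref{thm1} is \emph{not} expressed in terms of $C_{0},C,C_{1},C_{2},C_{3}$; it is built through Lemma~\ref{lemma4-1}, Lemma~\ref{lemma4-2} and a partition-of-unity argument, with several unspecified multiplicative losses along the way. Iterating that implicit constant infinitely many times cannot produce the explicit threshold
\[
\frac{\tilde\nu-1}{e\tilde\nu}\,C_{3}^{-\frac{\tilde\nu+1}{\tilde\nu-1}}\left(\frac{C_{1}}{4C_{0}(C_{1}+C_{2})}\right)^{\frac{\tilde\nu}{\tilde\nu-1}}\left(\frac{\lambda_{1}(\tilde\nu)CC_{1}}{\lambda_{1}(\tilde\nu)+1}\right)^{\frac{1}{\tilde\nu-1}}
\]
for $\sigma$; at best you would obtain \emph{some} positive threshold. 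The paper's direct representation-formula approach is precisely what makes the constants trackable, because each of $C_{0}$ (from Proposition~\ref{prop3-2}), $C,C_{1},C_{2}$ (from Propositions~\ref{prop2-2}--\ref{prop2-3} via Proposition~\ref{prop2-5}) and $C_{3}$ (from Proposition~\ref{prop2-4}) appears exactly once in the single $L^{p}$ estimate. So while your plan would prove a qualitative Moser--Trudinger inequality, it does not prove the theorem as stated.
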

\begin{remark}
From \cite[Remark 2.5]{Garofallo-Nhieu-1998}, we see that for every compact subset $K\subset U$, there exists $R_0>0$ such that the subunit balls $B(x,r)$, with $x\in K$ and $0<r\leq R_0$, are compact. However, this property does not generally hold for large radius. The assumption (A) serves as a necessary condition for the upper bound volume estimates of subunit balls with large radius (see Proposition \ref{prop2-3} and Proposition \ref{prop2-4} below). We point out
that the class of H\"{o}rmander vector fields under assumption (A) is quite large.

For instance, considering $U=\mathbb{R}^n$ and
 $X=(X_{1},X_{2},\ldots,X_{m})$  satisfying (H) along with the following homogeneity assumption:
\begin{enumerate}
  \item [(H.1)]

   There exists a family of non-isotropic dilations $\{\delta_{t}\}_{t>0}$ of the form
  \[ \delta_{t}:\mathbb{R}^n\to \mathbb{R}^n,\qquad \delta_{t}(x)=(t^{\sigma_{1}}x_{1},t^{\sigma_{2}}x_{2},\ldots,t^{\sigma_{n}}x_{n}), \]
 where $1=\sigma_{1}\leq \sigma_{2}\leq\cdots\leq \sigma_{n}$ are positive integers, such that $X_{1},X_{2},\ldots,X_{m}$ are $\delta_{t}$-homogeneous of degree $1$. That is, for all $t>0$, $f\in  C^{\infty}(\mathbb{R}^n)$, and $j = 1, \ldots, m$,
 \[ X_{j}(f\circ \delta_{t})=t(X_{j}f)\circ \delta_{t}. \]
 \end{enumerate}
In this case, $X$ are the so-called homogeneous H\"{o}rmander vector fields, and the assumption (A) is derived from the global version ball-box theorem in \cite[Theorem B]{Biagi2022}.

Moreover, if every vector field $X_{i}=\sum_{k=1}^{n}b_{ik}(x)\partial_{x_{k}}$ satisfies $b_{ik}\in L^{\infty}(U)$, then according to \cite[Proposition 1.37]{Bramanti2022} we have $d(x,y)\geq C|x-y|$ for all $x,y\in U$, and consequently,
\begin{equation*}
  B(x,r)\subset B_{E}\left(x,\frac{r}{C}\right)\cap U\qquad \mbox{for any}~~x\in U,~r>0,
\end{equation*}
where $B_{E}(x,r)=\{y\in\mathbb{R}^n| |x-y|<r\}$ denotes the classical Euclidean ball in $\mathbb{R}^n$. Hence, assumption (A) holds true in this case as well.

\end{remark}

The rest of the paper is organized as follows.  In Section \ref{section2}, we present some necessary preliminaries,
 including the comparison of local homogeneous dimension and generalized M\'{e}tivier index, the subunit metric and volume estimates of subunit balls, the degenerate Friedrichs-Poincar\'{e} type inequality, and the chain rules in general Sobolev spaces. In Section \ref{section3}, we then provide two different types of the representation formulas and construct the weighted weak-$L^p$ estimates of the $T$ operators induced by the representation formula. Finally, we prove Theorem \ref{thm1}-Theorem \ref{thm7} in Section \ref{section4}.

\emph{\textbf{Notations}}. For the sake of simplicity, different positive constants are usually denoted by $C$ sometimes
without indices.

\section{Preliminaries}
\label{section2}
We begin with the some basic objects and notations in Carnot-Carath\'{e}odory space.
\subsection{Basic objects and notations in Carnot-Carath\'{e}odory space}

Let $\mbox{Lie}(X)$ be the Lie algebra generated by vector fields $X_{1},X_{2},\ldots,X_{m}$ over $\mathbb{R}$.
For $l\in \mathbb{N}^{+}$, we define
\begin{equation*}
  \mbox{Lie}^{l}(X):=\mbox{span}\{[X_{i_{1}},\ldots,[X_{i_{j-1}},X_{i_{j}}]]|1\leq i_{j}\leq m, j\leq l\}.
\end{equation*}
The H\"{o}rmander's condition (H) gives that $\mbox{Lie}(X)(x)=\{Z(x)|Z\in \mbox{Lie}(X)\}=T_{x}(U)$ for all $x\in U$. This means, for each point $x\in U$, there exists a minimal integer $s(x)\leq s_{0}$ such that
\begin{equation*}
  \mbox{Lie}^{s(x)}(X)(x):=\{Z(x)|Z\in \mbox{Lie}^{s(x)}(X)\}=T_{x}(U).
\end{equation*}
The integer $s(x)$ is known as the degree of nonholonomy at $x$.

For $x\in U$ and $1\leq j\leq s(x)$, we set $V_{j}(x):=\mbox{Lie}^{j}(X)(x)$. It follows that
\[ \{0\}=V_{0}(x)\subset V_{1}(x)\subset  \cdots \subset V_{s(x)-1}(x)\subsetneq  V_{s(x)}(x)= T_{x}(U).\] Then, we define
\begin{equation}\label{2-1}
\nu(x):=\sum_{j=1}^{s(x)}j(\nu_{j}(x)-\nu_{j-1}(x))
\end{equation}
as the pointwise homogeneous dimension at $x$ (see \cite{Morbidelli2000}), where $\nu_{j}(x):=\mbox{dim}V_{j}(x)$ with $\nu_{0}(x):=0$.  Note that \eqref{2-1} implies $n\leq n+s(x)-1\leq \nu(x)\leq ns(x)$.

We say a point $x\in U$ is regular if, for every $1\leq j\leq s(x)$, the dimension $\nu_{j}(y)$ is a constant as $y$ varies in an open neighbourhood of $x$. Otherwise, $x$ is said to be singular.
 Moreover, for any subset $\Omega \subset\subset U$, we say $\Omega$ is equiregular if every point of $\Omega$ is regular, while  $\Omega$ is said to be non-equiregular if it contains some singular points. The equiregular assumption is also known as the M\'etivier's condition in PDEs (see \cite{Metivier1976}). For the equiregular connected subset $\Omega$, the pointwise homogeneous dimension $\nu(x)$ is a constant $\nu$ consistent with the Hausdorff dimension of $\Omega$ with respect to $X$, and this constant $\nu$ is also called the M\'etivier index.  Additionally, if the subset $\Omega\subset U$ is non-equiregular, we can introduce the generalized M\'{e}tivier index  by
\begin{equation}\label{2-2}
  \tilde{\nu}:=\max_{x\in \overline{\Omega}}\nu(x).
\end{equation}
The generalized M\'{e}tivier index  is
also called the non-isotropic dimension (see \cite{Yung2015,chen-chen2019,chen-chen2020}), which plays an important role in the geometry and functional settings associated
with vector fields $X$.

Then, we introduce some notations in \cite{NSW1985} to present the precise definition of local homogeneous dimension.
 Let $J=(j_{1},\ldots,j_{k})$ be a multi-index with length $|J|=k$, where $1\leq j_{i}\leq m$ and $1\leq k\leq r$. We assign a commutator $X_{J}$ of length $k$  such that
 \[  X_{J}:=[X_{j_{1}},[X_{j_{2}},\ldots[X_{j_{k-1}},X_{j_{k}}]\ldots]],\]
and set
\[ X^{(k)}:=\{X_{J}|J=(j_{1},\ldots,j_{k}),~1\leq j_{i}\leq m, |J|=k \} \]
 the collection of all commutators of length $k$.  Let $Y_{1},\ldots,Y_{l}$ be an enumeration of the components of $X^{(1)},\ldots,X^{(s_{0})}$. We say $Y_{i}$ has formal degree $d(Y_{i})=k$ if $Y_{i}$ is an element of $X^{(k)}$.
 If $I=(i_{1},i_{2},\ldots,i_{n})$ is a $n$-tuple of integers with $1\leq i_{k}\leq l$, we define \[ d(I):=d(Y_{i_{1}})+d(Y_{i_{2}})+\cdots+d(Y_{i_{n}}), \]
  and the so-called Nagel-Stein-Wainger polynomial
\begin{equation}\label{2-3}
  \Lambda(x,r):=\sum_{I}|\lambda_{I}(x)|r^{d(I)},
\end{equation}
where $\lambda_{I}(x):=\det(Y_{i_{1}},Y_{i_{2}},\ldots,Y_{i_{n}})(x)$, and $I=(i_{1},i_{2},\ldots,i_{n})$ ranges in the set of $n$-tuples satisfying $1\leq i_{k}\leq l$.

We now recall the local homogeneous dimension introduced by Capogna-Danielli-Garofalo in \cite{Capogna1993,Capogna1994,Capogna1996}. Let $\Omega\subset\subset U$ be a bounded open set. According to
 \cite[(3.4), p. 1166]{Capogna1996} and \cite[(3.1), p. 105]{Garofalo2018}, the local homogeneous dimension $Q$ relative to the bounded set $\overline{\Omega}$ is precisely defined as follows:
\begin{equation}\label{2-4}
 Q:=\max\{d(I)|\lambda_{I}(x)\neq 0~\mbox{and}~x\in \overline{\Omega}\}=\sup_{x\in\overline{\Omega}}\left(\lim_{r\to+\infty}\frac{\ln \Lambda(x,r)}{\ln r}\right).
\end{equation}

To compare the local homogeneous dimension and generalized M\'{e}tivier index, we employ the following proposition.
\begin{proposition}[{\cite[Proposition 2.2]{chen-chen2019}}]
\label{prop2-1}
For each $x\in U$, the pointwise homogeneous dimension $\nu(x)$ can be characterized by
\begin{equation}\label{2-5}
  \nu(x)=\sum_{j=1}^{s(x)}j(\nu_{j}(x)-\nu_{j-1}(x))=\lim_{r\to 0^{+}}\frac{\ln\Lambda(x,r)}{\ln r}=\min\{d(I)|\lambda_{I}(x)\neq 0\}.
\end{equation}
\end{proposition}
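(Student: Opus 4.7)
The statement asserts three equal quantities, so there are two equalities to establish; write $d_{\min}(x):=\min\{d(I)\mid \lambda_I(x)\neq 0\}$. The identification $\lim_{r\to 0^+}\frac{\ln\Lambda(x,r)}{\ln r}=d_{\min}(x)$ is a direct inspection of the polynomial $\Lambda(x,r)=\sum_I|\lambda_I(x)|r^{d(I)}$: only indices with $\lambda_I(x)\neq 0$ contribute, and factoring out $r^{d_{\min}(x)}$ leaves a sum converging to a strictly positive limit as $r\to 0^+$ (at least one term realizes the minimal exponent). Taking logarithms and dividing by $\ln r<0$ yields the claim.

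The remaining equality $\nu(x)=d_{\min}(x)$ rests on the structural observation that, for each $1\le j\le s(x)$, the evaluations of commutators in $X^{(j)}$ span the quotient $V_j(x)/V_{j-1}(x)$. This is because $\mathrm{Lie}^{j}(X)$ is spanned by commutators of length at most $j$, those of length strictly less than $j$ already lie in $\mathrm{Lie}^{j-1}(X)$, so any new directions at step $j$ must come from $X^{(j)}$ modulo $V_{j-1}(x)$. For the upper bound $d_{\min}(x)\le\nu(x)$, I would construct an admissible $n$-tuple $I$ inductively: given a basis of $V_{j-1}(x)$ assembled from $\nu_{j-1}(x)$ elements of $X^{(1)}\cup\cdots\cup X^{(j-1)}$, append $\nu_j(x)-\nu_{j-1}(x)$ elements of $X^{(j)}$ whose images span $V_j(x)/V_{j-1}(x)$. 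At $j=s(x)$ the resulting $n$-tuple satisfies $\lambda_I(x)\neq 0$ and $d(I)=\sum_{j=1}^{s(x)}j(\nu_j(x)-\nu_{j-1}(x))=\nu(x)$.

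For the matching lower bound, fix any $I=(i_1,\ldots,i_n)$ with $\lambda_I(x)\neq 0$, let $n_j=\#\{k\mid d(Y_{i_k})=j\}$, and set $N_j=n_1+\cdots+n_j$. The $N_j$ basis vectors of degree at most $j$ all lie in $V_j(x)$ and are linearly independent there, forcing $N_j\le\nu_j(x)$, with equality $N_{s(x)}=n=\nu_{s(x)}(x)$. Abel summation gives
$$d(I)=\sum_{j=1}^{s(x)}j\,n_j=s(x)\,n-\sum_{j=1}^{s(x)-1}N_j,\qquad \nu(x)=s(x)\,n-\sum_{j=1}^{s(x)-1}\nu_j(x),$$
hence $d(I)-\nu(x)=\sum_{j=1}^{s(x)-1}(\nu_j(x)-N_j)\ge 0$, giving $d_{\min}(x)\ge\nu(x)$. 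The only mildly delicate ingredient is the structural fact that $X^{(j)}$ surjects onto $V_j(x)/V_{j-1}(x)$; once this is granted, the matching bounds are a standard greedy/combinatorial pair and I do not expect a serious obstacle.
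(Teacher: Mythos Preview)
The paper does not prove this proposition; it is quoted from \cite[Proposition 2.2]{chen-chen2019} and used without argument. So there is no in-paper proof to compare against, and your proposal stands on its own.

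Your argument is correct in outline, but there is one slip in the lower bound. When you fix an arbitrary $I$ with $\lambda_I(x)\neq 0$ and run the Abel summation up to $s(x)$, you assert $N_{s(x)}=n$. This need not hold: the enumeration $Y_1,\ldots,Y_l$ runs over commutators of length up to $s_0$, not $s(x)$, so some $Y_{i_k}$ in your tuple may well have formal degree strictly larger than $s(x)$, making $N_{s(x)}<n$. The fix is immediate: sum up to $s_0$ instead, and extend $\nu_j(x)=n$ for $s(x)\le j\le s_0$. Then $N_{s_0}=n$ is trivial, the Abel identities become
\[
d(I)=s_0\,n-\sum_{j=1}^{s_0-1}N_j,\qquad \nu(x)=s_0\,n-\sum_{j=1}^{s_0-1}\nu_j(x),
\]
and your inequality $N_j\le\nu_j(x)$ (which holds for all $j$, since the $N_j$ vectors of degree $\le j$ lie in $V_j(x)$ and remain independent) still gives $d(I)\ge\nu(x)$. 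With this correction the proof goes through; the limit computation and the greedy upper-bound construction are both fine as written.
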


Obviously, by \eqref{2-2}, \eqref{2-4} and \eqref{2-5} we get
\begin{equation}\label{2-6}
 \tilde{\nu}=\max_{x\in \overline{\Omega}}\nu(x)=\max_{x\in \overline{\Omega}}\left(\min\{d(I)|\lambda_{I}(x)\neq 0\}\right)\leq \max_{x\in \overline{\Omega}}\left(\max\{d(I)|\lambda_{I}(x)\neq 0\}\right)= Q.
\end{equation}
It is worth mentioning that $\tilde{\nu}$ is strictly less than $Q$ in some degenerate cases. For example,
\begin{ex}
\label{example2-1}
Let $X=(X_1,X_2,X_3)=(e^{x_2}\partial_{x_1}, e^{2x_2}\partial_{x_1}, x_1\partial_{x_2})$ be the smooth vector fields defined in $\mathbb{R}^{2}$. Assume that $\Omega=\{x\in \mathbb{R}^2| |x|<1\}$ is the unit ball in $\mathbb{R}^{2}$. A direct calculation yields that
\[ [X_{1},X_{2}]=0,\quad [X_{1},X_{3}]=e^{x_{2}}\partial_{x_{2}}-x_{1}e^{x_{2}}\partial_{x_{1}},\quad \mbox{and}~~~ [X_{2},X_{3}]=e^{2x_{2}}\partial_{x_{2}}-2x_{1}e^{2x_{2}}\partial_{x_{1}}. \]
Observing that
\[ \det(X_{1},X_{3})(x)=x_{1}e^{x_{2}}, \quad \det(X_{2},X_{3})(x)=x_{1}e^{2x_{2}},\]
\[ \det(X_{1},[X_{1},X_{3}])(x)=e^{2x_{2}},\quad \det([X_{1},X_{3}],[X_{2},X_{3}])(x)=x_{1}e^{3x_{2}},   \]
we obtain $\nu(x)=2$ if $x_1\neq 0$, and $\nu(x)=3$ for $x_1=0$. Thus, the generalized M\'{e}tivier index $$\tilde{\nu}=\max_{x\in \overline{\Omega}}\nu(x)= 3. $$ However, since $\det([X_{1},X_{3}],[X_{2},X_{3}])(x)\neq 0$ for $x_1\neq 0$, the local
homogeneous dimension
\[ Q=\max_{x\in \overline{\Omega}}\left(\max\{d(I)|\lambda_{I}(x)\neq 0\}\right)=4,\]
which clearly indicates that $Q>\tilde{\nu}$ in this degenerate case.
\end{ex}

\begin{remark}
If the H\"{o}rmander vector fields $X$ satisfy M\'{e}tivier's condition on $\overline{\Omega}$ (i.e. $\overline{\Omega}$ is equiregular), then we have $Q=\tilde{\nu}$.
\end{remark}

\subsection{Subunit metric and subunit balls}

Then, we introduce the subunit metric associated with the H\"{o}rmander vector fields $X$.
\begin{definition}[Subunit metric, see \cite{NSW1985,Morbidelli2000}]
\label{def2-1}
For any $x,y\in U$ and $\delta>0$, let $C(x,y,\delta)$ be the collection of absolutely continuous mapping $\varphi:[0,1]\to U$, such that $\varphi(0)=x,\varphi(1)=y$ and
\[ \varphi'(t)=\sum_{i=1}^{m}a_{i}(t)X_{i}I(\varphi(t)) \]
with $\sum_{k=1}^{m}|a_{k}(t)|^2\leq \delta^2$ for a.e. $t\in [0,1]$. Here, $X_{i}I(x)=(b_{i1}(x),b_{i2}(x),\ldots,b_{in}(x))^{T}$ denotes the corresponding vector value function of $X_{i}=\sum_{k=1}^{n}b_{ik}(x)\partial_{x_{k}}$.
The subunit metric $d(x,y)$ is defined by
\begin{equation}\label{2-7}
  d(x,y):=\inf\{\delta>0~|~ \exists \varphi\in C(x,y,\delta)~\mbox{with}~\varphi(0)=x,~ \varphi(1)=y\}.
\end{equation}
\end{definition}

The subunit metric $d$, often referred to as the control distance, is ensured to be well-defined by the Chow-Rashevskii theorem (see \cite[Theorem 57]{Bramanti2014}).   Given any $x\in U$ and $r>0$, we denote by
    \[ B(x,r):=\{y\in U|~d(x,y)<r\} \]
 the subunit ball associated with the subunit metric $d(x,y)$. This notation for the subunit ball will be consistently employed throughout the paper. To provided precise estimates of the volume of the subunit ball, we construct the following lower bound of Nagel-Stein-Wainger polynomial.

\begin{proposition}
\label{prop2-2}
  For any compact subset $K\subset U$ and any $\delta>0$, there exists a positive constant $C>0$ such that
\begin{equation}\label{2-8}
  \Lambda(x,r)\geq Cr^{\tilde{\nu}(K)} \qquad \forall x\in K,~~0<r\leq \delta,
\end{equation}
where  $\tilde{\nu}(K):=\max_{x\in K}\nu(x)$ denotes the generalized M\'{e}tivier index of $K$,  and $\nu(x)$ is the pointwise homogeneous dimension defined in \eqref{2-1}.
\end{proposition}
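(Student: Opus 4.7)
The plan is to exploit the characterization $\nu(x)=\min\{d(I)\mid \lambda_{I}(x)\neq 0\}$ from Proposition \ref{prop2-1}, keep just a \emph{single} monomial from the sum defining $\Lambda(x,r)$, and combine it with a standard compactness/continuity argument. The essential observation is that when $r\leq 1$, lower degrees give larger monomials, so $r^{\nu(x)}\geq r^{\tilde{\nu}(K)}$ for every $x\in K$. Once the $r\leq 1$ case is settled, an increase in $r$ up to $\delta$ will be handled by the evident monotonicity of $\Lambda(x,\cdot)$.

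First, I would fix $x\in K$. By Proposition \ref{prop2-1}, there exists an $n$-tuple $I_{x}=(i_{1},\ldots,i_{n})$ with $d(I_{x})=\nu(x)$ and $\lambda_{I_{x}}(x)\neq 0$. Since $\lambda_{I_{x}}(\,\cdot\,)=\det(Y_{i_{1}},\ldots,Y_{i_{n}})(\,\cdot\,)$ is a continuous (in fact smooth) function on $U$, there is an open neighbourhood $V_{x}\subset U$ of $x$ and a constant $c_{x}>0$ such that $|\lambda_{I_{x}}(y)|\geq c_{x}$ for every $y\in V_{x}$. Retaining only this single term in the polynomial $\Lambda(y,r)$ gives, for $y\in V_{x}$ and $0<r\leq 1$,
\[
\Lambda(y,r)\;\geq\;|\lambda_{I_{x}}(y)|\,r^{d(I_{x})}\;\geq\;c_{x}\,r^{\nu(x)}\;\geq\;c_{x}\,r^{\tilde{\nu}(K)},
\]
where the last inequality uses $\nu(x)\leq\tilde{\nu}(K)$ and $r\leq 1$.

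Next, I cover the compact set $K$ by finitely many such neighbourhoods $V_{x_{1}},\ldots,V_{x_{N}}$ and set $c:=\min_{1\leq j\leq N}c_{x_{j}}>0$. This yields the uniform lower bound $\Lambda(x,r)\geq c\,r^{\tilde{\nu}(K)}$ for every $x\in K$ and every $0<r\leq 1$. To extend to $0<r\leq \delta$ with $\delta>1$ if necessary, I would use that $\Lambda(x,r)=\sum_{I}|\lambda_{I}(x)|r^{d(I)}$ is non-decreasing in $r\geq 0$ (all coefficients are non-negative), so for $1\leq r\leq\delta$,
\[
\Lambda(x,r)\;\geq\;\Lambda(x,1)\;\geq\;c\;\geq\;\frac{c}{\delta^{\tilde{\nu}(K)}}\,r^{\tilde{\nu}(K)}.
\]
Taking $C:=c\cdot\min\{1,\delta^{-\tilde{\nu}(K)}\}$ yields the asserted estimate.

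The only subtle point — and what I would flag as the main conceptual step rather than a genuine obstacle — is recognising that it suffices to retain one monomial of degree exactly $\nu(x)$ (which is guaranteed by \eqref{2-5}) and to exploit the inequality $r^{\nu(x)}\geq r^{\tilde{\nu}(K)}$ available for small $r$; the compactness of $K$ and the smoothness of each $\lambda_{I}$ then do all the remaining work. No uniform control of $\nu(x)$ itself is needed, which is fortunate because $\nu$ is only upper semicontinuous and may genuinely jump up on the lower-dimensional singular set of $K$, as illustrated by Example \ref{example2-1}.
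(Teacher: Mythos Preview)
Your proof is correct and follows essentially the same strategy as the paper's: pick for each $x\in K$ an $n$-tuple $I_x$ with $d(I_x)=\nu(x)$ and $\lambda_{I_x}(x)\neq 0$ via Proposition~\ref{prop2-1}, use continuity of $\lambda_{I_x}$ and compactness of $K$ to obtain a finite cover with uniform lower bounds, and then compare $r^{\nu(x_i)}$ to $r^{\tilde{\nu}(K)}$. The only cosmetic difference is that you split at $r=1$ and invoke monotonicity of $\Lambda(x,\cdot)$ for $1\le r\le\delta$, whereas the paper handles all $0<r\le\delta$ in one stroke via $r^{\nu(x_i)}\ge\delta^{\nu(x_i)-\tilde{\nu}(K)}\,r^{\tilde{\nu}(K)}$.
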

\begin{proof}
For each fixed $x\in K$, according to \eqref{2-5}, there exists an $n$-tuple $I_x$ such that $d(I_x)=\nu(x)$ and $\lambda_{I_x}(x)\neq 0$. The continuity of the function $y\mapsto|\lambda_{I_x}(y)|$ allows us to select an open neighborhood $U_{x}\subset U$ of $x$ such that $|\lambda_{I_x}(y)|\geq \frac{1}{2}|\lambda_{I_x}(x)|>0$ for all $y\in U_{x}$. Since $K$ is a compact subset of $U$, we can find a finite collection of pairs $(x_{i},I_{i},U_{i}, C_{i})~ (1\leq i\leq q)$ satisfying
\begin{itemize}
  \item $x_{i}\in K$ and $U_{i}\subset U$ is an open neighborhood of $x_i$;
  \item $K\subset \bigcup_{i=1}^{q}U_{i}$;
  \item $d(I_{i})=\nu(x_i)$ and  $|\lambda_{I_i}(y)|\geq C_{i}>0$ for any $y\in U_{i}$.
\end{itemize}
Thus, for any $x\in K$,
\begin{equation}
\begin{aligned}\label{2-9}
\Lambda(x,r)=\sum_{i=1}^{q}\Lambda(x,r)\chi_{U_i\cap K}(x)\geq
\sum_{i=1}^{q}|\lambda_{I_i}(x)|\chi_{U_i\cap K}(x)r^{d(I_i)}\geq C_0\sum_{i=1}^{q}\chi_{U_i\cap K}(x)r^{\nu(x_{i})},
\end{aligned}
\end{equation}
where $C_{0}=\min\{C_{i}|1\leq i\leq q\}$, and $\chi_{E}$ denotes the indicator function of $E$. Observing that $r^{\nu(x_{i})}\geq \left(\min_{1\leq i\leq q}\delta^{\nu(x_i)-\tilde{\nu}(K)}\right) r^{\tilde{\nu}(K)}$ for $0<r\leq \delta$ and $1\leq i\leq q$,
 \eqref{2-9} derives that
\begin{equation*}
 \Lambda(x,r)\geq C r^{\tilde{\nu}(K)} \qquad\forall x\in K,~0<r\leq \delta,
\end{equation*}
where $C=C_0\left(\min_{1\leq i\leq q}\delta^{\nu(x_i)-\tilde{\nu}(K)}\right)>0$.
\end{proof}

Proposition \ref{prop2-2} provides us with the following  volume estimates for the subunit ball, which refine \cite[Theorem 1]{NSW1985}.

\begin{proposition}[Ball-Box theorem]
\label{prop2-3}
For any compact set $K\subset U$, there exist positive constants $0<C_{1}\leq C_{2}$ and $\rho_{K}>0$ such that
\begin{equation}\label{2-10}
 |B(x,r)|\geq C_{1}\Lambda(x,r)\qquad \forall x\in K,~0<r\leq \max\{\delta_{K},\rho_K\},\\
\end{equation}
and
\begin{equation}\label{2-11}
|B(x,r)|\leq C_{2}\Lambda(x,r)\qquad \forall x\in K,~0<r\leq \rho_{K},
\end{equation}
where $|B(x,r)|$ is the $n$-dimensional Lebesgue measure of $B(x,r)$, and $\delta_{K}:=\sup_{x,y\in K}d(x,y)$ denotes the diameter of $K$ with respect to the subunit metric $d$. Furthermore, if the assumption $(A)$ is satisfied (i.e. $|B(x,r)|<+\infty$ for all $x\in U$ and $r>0$), we have
\begin{equation}\label{2-12}
 C_{1}\Lambda(x,r)\leq |B(x,r)|\leq C_{2}\Lambda(x,r)\qquad \forall x\in K,~0<r\leq \max\{\delta_{K},\rho_K\}.\\
\end{equation}
\end{proposition}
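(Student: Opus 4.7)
The strategy is to first invoke the classical Ball-Box theorem of Nagel-Stein-Wainger \cite{NSW1985} in the small-radius regime, and then extend both estimates to the full range $0<r\leq \max\{\delta_{K},\rho_K\}$ by combining the monotonicity of subunit balls with the uniform lower bound for $\Lambda(x,r)$ supplied by Proposition \ref{prop2-2}. Concretely, \cite[Theorem 1]{NSW1985} furnishes a radius $\rho_K>0$ and constants $c_1,c_2>0$ such that $c_1\Lambda(x,r)\leq |B(x,r)|\leq c_2\Lambda(x,r)$ holds for every $x\in K$ and every $0<r\leq \rho_K$. This disposes of \eqref{2-10} and \eqref{2-11} on the small-radius regime, so the remaining task is to handle $\rho_K<r\leq \delta_K$ in the case $\delta_K>\rho_K$.

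For the lower bound \eqref{2-10} at large radii, my plan is to use the trivial inclusion $B(x,\rho_K)\subset B(x,r)$ to obtain $|B(x,r)|\geq c_1\Lambda(x,\rho_K)$, and then to compare $\Lambda(x,\rho_K)$ with $\Lambda(x,r)$ uniformly in $x\in K$. Proposition \ref{prop2-2} guarantees $\Lambda(x,\rho_K)\geq C'\rho_K^{\tilde\nu(K)}$ for all $x\in K$, while the continuity of the coefficients $\lambda_I$ on the compact set $K$ yields the uniform upper bound $\Lambda(x,r)\leq M:=\sum_{I}\|\lambda_I\|_{L^\infty(K)}\delta_K^{d(I)}$ whenever $r\leq\delta_K$. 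Together these produce $|B(x,r)|\geq (c_1 C'\rho_K^{\tilde\nu(K)}/M)\,\Lambda(x,r)$, and \eqref{2-10} follows on the whole range with $C_1=\min\{c_1,\,c_1 C'\rho_K^{\tilde\nu(K)}/M\}$.

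For the upper bound \eqref{2-11} at large radii under assumption $(A)$, I would fix an arbitrary reference point $x_0\in K$. The triangle inequality for the subunit metric gives $B(x,r)\subset B(x_0,\,r+d(x,x_0))\subset B(x_0,2\delta_K)$ for every $x\in K$ and $r\leq\delta_K$. Assumption $(A)$ makes $N:=|B(x_0,2\delta_K)|$ finite, so $|B(x,r)|\leq N$ uniformly for $x\in K$ and $r\leq\delta_K$. Combining this with the monotonicity $\Lambda(x,r)\geq \Lambda(x,\rho_K)\geq C'\rho_K^{\tilde\nu(K)}$ coming once more from Proposition \ref{prop2-2} yields $|B(x,r)|\leq (N/(C'\rho_K^{\tilde\nu(K)}))\,\Lambda(x,r)$, which closes \eqref{2-11} on the extended range. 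The estimate \eqref{2-12} is then merely the conjunction of the extended \eqref{2-10} and \eqref{2-11}.

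The main obstacle, and the reason why Proposition \ref{prop2-2} is essential rather than just the pointwise characterization $\nu(x)=\min\{d(I):\lambda_I(x)\neq 0\}$, is securing constants that are uniform across $K$. A purely pointwise bound $\Lambda(x,\rho_K)>0$ would not be enough, since the infimum over $K$ could in principle collapse near singular points where the lower-degree commutators degenerate; Proposition \ref{prop2-2} rules this out by providing the uniform multiple of $\rho_K^{\tilde\nu(K)}$. Once this uniformity is in hand, the remainder is a straightforward monotonicity and triangle-inequality argument, and no deeper information about the subunit geometry is required beyond the small-radius Ball-Box theorem.
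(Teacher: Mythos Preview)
Your argument is correct and close to the paper's, but there is one genuine methodological difference worth noting in the extension of the lower bound \eqref{2-10} to the range $\rho_K<r\leq\delta_K$. You invoke Proposition~\ref{prop2-2} to obtain a uniform floor $\Lambda(x,\rho_K)\geq C'\rho_K^{\tilde\nu(K)}$ and then pair it with a compactness ceiling $\Lambda(x,r)\leq M$ on $K\times(0,\delta_K]$. The paper instead exploits the polynomial structure of $\Lambda$ directly: writing $\Lambda(x,\rho_K)=\sum_I|\lambda_I(x)|r^{d(I)}(\rho_K/r)^{d(I)}$ and using $d(I)\leq ns_0$ together with $\rho_K/r\geq\rho_K/\delta_K$, one gets $\Lambda(x,\rho_K)\geq(\rho_K/\delta_K)^{ns_0}\Lambda(x,r)$ immediately, without appealing to Proposition~\ref{prop2-2} at all. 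So your claim that Proposition~\ref{prop2-2} is ``essential'' for the lower bound is slightly off; it is one valid route, but the paper's ratio argument is more elementary here and yields an explicit constant $C_1=C^{-1}(\rho_K/\delta_K)^{ns_0}$. For the upper bound under assumption~$(A)$, your argument and the paper's are essentially identical (the paper uses $B(x_0,3\delta_K)$ rather than $B(x_0,2\delta_K)$, a cosmetic difference), and there Proposition~\ref{prop2-2} is indeed the tool both of you use to bound $\Lambda(x,r)$ from below uniformly.
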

\begin{proof}
A well-known result by Nagel-Stein-Wainger \cite{NSW1985} gives that, for any compact set $K\subset U$, there exist positive constants $C>0$ and $\rho_{K}>0$ such that for any $x\in K$ and any $0< r\leq \rho_{K}$,
\begin{equation}\label{2-13}
  C^{-1}\Lambda(x,r)\leq |B(x,r)|\leq C\Lambda(x,r).
\end{equation}
This yields \eqref{2-11}, and also gives \eqref{2-10} and \eqref{2-12} provided $\delta_{K}\leq \rho_{K}$.

 Assume that  $\rho_{K}< \delta_K$ and $\rho_{K} \leq r \leq \delta_K$. Since $d(I)\leq ns_{0}$, we deduce from \eqref{2-3} and \eqref{2-13} that for any $x\in K$ and $\rho_{K} \leq r \leq \delta_K$,
\[\begin{aligned}
|B(x,r)|&\geq |B(x,\rho_{K})|\geq C^{-1}\sum_{I}|\lambda_{I}(x)|\rho_{K}^{d(I)}=C^{-1}\sum_{I}|\lambda_{I}(x)|r^{d(I)}\cdot\left(\frac{\rho_{K}}{r}\right)^{d(I)}\\
&\geq C^{-1}\sum_{I}|\lambda_{I}(x)|r^{d(I)}\cdot\left(\frac{\rho_{K}}{\delta_K}\right)^{d(I)}\geq C^{-1}\left(\frac{\rho_{K}}{\delta_K}\right)^{ns_{0}}\sum_{I}|\lambda_{I}(x)|r^{d(I)}\geq C_{1}\Lambda(x,r),
\end{aligned}
\]
where $C_{1}=C^{-1}\left(\frac{\rho_{K}}{\delta_K}\right)^{ns_{0}}>0$ is a positive constant. This proves \eqref{2-10} as well as the first inequality in \eqref{2-12}.

 Moreover, if the assumption $(A)$ is satisfied, we can choose a point $x_0\in K$ such that
 \[\{x\in U|d(x,K):=\inf_{y\in K}d(x,y)\leq \delta_{K}\}\subset B(x_0,3\delta_K). \]
 Note that $B(x,\delta_K)\subset \{x\in U|d(x,K):=\inf_{y\in K}d(x,y)\leq \delta_{K}\}$ for all $x\in K$.
 Thus, Proposition \ref{prop2-2} implies that for any $x\in K$ and $\rho_{K} \leq r \leq \delta_K$,
\[ \begin{aligned}
|B(x,r)|&\leq|B(x,\delta_K)|\leq |B(x_0,3\delta_K)|\leq C_{2}\Lambda(x,r),
\end{aligned} \]
where $C_{2}>0$ is a positive constant. This completes the proof of \eqref{2-12}.
\end{proof}

By Proposition \ref{prop2-2} and Proposition \ref{prop2-3}, we can deduce that
\begin{proposition}
\label{prop2-4}
 For any compact subset $K\subset U$, there exist  $C_{3}>1$ and $\rho_{K}>0$ such that
\begin{equation}\label{2-14}
  |B(x,2r)|\leq C_{3}|B(x,r)| \quad \forall x\in K,~~ 0<r\leq \frac{\rho_{K}}{2}.
\end{equation}
Additionally,
\begin{equation}\label{2-15}
C_{3}^{-1}|B(y,d(x,y))|\leq   |B(x,d(x,y))|\leq C_{3}|B(y,d(x,y))|\quad \forall x,y\in K~ \mbox{with}~ d(x,y)\leq \frac{\rho_{K}}{2}.
\end{equation}
Here, $\rho_{K}$ is the same positive constant appeared in  Proposition \ref{prop2-3}. Furthermore, if the assumption $(A)$ is satisfied (i.e. $|B(x,r)|<+\infty$ for all $x\in U$ and $r>0$), \eqref{2-14} holds for all $x\in K$ and $0<r\leq \delta_{K}=\sup_{x,y\in K}d(x,y)$, and the restriction $d(x,y)\leq \frac{\rho_{K}}{2}$ in \eqref{2-15} can be removed.
\end{proposition}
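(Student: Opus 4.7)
The plan is to derive both \eqref{2-14} and \eqref{2-15} from Proposition \ref{prop2-2} and Proposition \ref{prop2-3} combined with the polynomial structure of $\Lambda(x,r)$. The pivotal elementary observation is that every multi-index $I$ appearing in \eqref{2-3} satisfies $d(I) \leq n s_0$, so for any $\alpha \geq 1$ one has
\[ \Lambda(x, \alpha r) \;=\; \sum_I |\lambda_I(x)| \alpha^{d(I)} r^{d(I)} \;\leq\; \alpha^{n s_0}\, \Lambda(x, r). \]
This monotonicity bound is the only ``scaling'' input needed from the polynomial $\Lambda$.

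For the doubling estimate \eqref{2-14} on the range $0 < r \leq \rho_K/2$, I would simply chain three inequalities:
\[ |B(x,2r)| \;\leq\; C_2\, \Lambda(x,2r) \;\leq\; C_2\cdot 2^{n s_0}\, \Lambda(x,r) \;\leq\; \frac{C_2\cdot 2^{n s_0}}{C_1}\, |B(x,r)|, \]
invoking the upper bound \eqref{2-11} at the admissible radius $2r \leq \rho_K$ and the lower bound \eqref{2-10} at radius $r$; this yields $C_3 := C_2\cdot 2^{n s_0}/C_1 > 1$. The quasi-symmetry \eqref{2-15} is then a direct consequence of the triangle inequality for $d$: any $z \in B(x, d(x,y))$ satisfies $d(z,y) \leq d(z,x) + d(x,y) < 2 d(x,y)$, whence $B(x, d(x,y)) \subset B(y, 2d(x,y))$, and so
\[ |B(x, d(x,y))| \;\leq\; |B(y, 2d(x,y))| \;\leq\; C_3\, |B(y, d(x,y))| \]
by applying the just-proved doubling at $y \in K$ with radius $d(x,y) \leq \rho_K/2$; the reverse inequality is symmetric.

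Under assumption $(A)$, the full equivalence \eqref{2-12} holds on the enlarged range $0 < r \leq \max\{\delta_K, \rho_K\}$, so the same chain immediately produces doubling whenever $2r \leq \max\{\delta_K, \rho_K\}$. I expect the main obstacle to be extending \eqref{2-14} to the residual range $r \in (\max\{\delta_K, \rho_K\}/2,\, \delta_K]$, in which the inflated radius $2r$ may exceed the diameter of $K$ and \eqref{2-12} no longer applies directly to $|B(x,2r)|$. To treat this, I would use $(A)$ together with the inclusion $B(x,2r) \subset B(x_0, 3\delta_K)$ for a suitable $x_0 \in K$ (the same device used in the proof of Proposition \ref{prop2-3}), whose finite volume is bounded uniformly in $x \in K$ via $\Lambda(x_0, 3\delta_K)$ on a fixed compact enlargement of $K$; meanwhile $|B(x,r)| \geq |B(x, \rho_K/2)| \geq C_1 \Lambda(x, \rho_K/2)$ admits a uniform positive lower bound on $K$ by Proposition \ref{prop2-2}. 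Taking the ratio of the two estimates delivers \eqref{2-14} on the full range $0 < r \leq \delta_K$ with a (possibly larger) constant $C_3$, and the identical triangle argument then removes the restriction $d(x,y) \leq \rho_K/2$ in \eqref{2-15}.
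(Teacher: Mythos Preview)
Your proposal is correct and matches the paper's intent: the paper does not give an explicit proof of Proposition~\ref{prop2-4}, merely stating that it follows from Proposition~\ref{prop2-2} and Proposition~\ref{prop2-3}, and your argument is precisely the natural way to spell this out. The chain $|B(x,2r)|\leq C_2\Lambda(x,2r)\leq C_2\,2^{ns_0}\Lambda(x,r)\leq (C_2\,2^{ns_0}/C_1)|B(x,r)|$, the triangle-inequality inclusion $B(x,d(x,y))\subset B(y,2d(x,y))$, and the residual-range treatment via $B(x,2r)\subset B(x_0,3\delta_K)$ are all standard and correct; one small simplification is that since $x_0\in K$ is a fixed point, $|B(x_0,3\delta_K)|$ is a single finite number by $(A)$ and no further appeal to $\Lambda$ on an enlarged compact set is needed.
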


Next, we give the following estimates concerning the volume of subunit ball.

 \begin{proposition}
\label{prop2-5}
 Let $W\subset\subset U$ be a bounded open subset, $\mu=\xi+(\eta-1)\tilde{\nu}(\overline{W})$ with $\eta\geq 1$ and $\xi>0$. Then, there exists positive constant $\rho_{\overline{W}}>0$ such that for all $x\in \overline{W}$ and $0< r\leq  \rho_{\overline{W}}$, we have
\begin{equation}\label{2-16}
\int_{B(x,r)}\frac{d(x,y)^\mu}{|B(x,d(x,y))|^{\eta}}dy\leq (C_{1}C)^{1-\eta}C_{3}\frac{2^\xi}{2^{\xi}-1}r^\xi,
\end{equation}
where $\tilde{\nu}(\overline{W})=\max_{x\in\overline{W}}\nu(x)$. Moreover, if  $n\eta>\mu$ and the assumption (A) is satisfied, there exists a positive constant $C>0$ such that
\begin{equation}\label{2-17}
\int_{W} \frac{d(x,y)^\mu}{|B(x,d(x,y))|^{\eta}}dy\leq \left[\left(\frac{C_{2}}{C_{1}}\right)^{\eta}+1\right]\frac{C_{3}}{(CC_{1})^{\frac{\mu}{\tilde{\nu}}}}\frac{2^\xi}{2^{\xi}-1}|W|^\frac{\xi}{\tilde{\nu}(\overline{W})}\qquad \forall x\in \overline{W}.
\end{equation}
Here, $C,C_{1},C_{2},C_{3}$ are the positive constants appeared in Proposition \ref{prop2-2}-Proposition \ref{prop2-4}.
 \end{proposition}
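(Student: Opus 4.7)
I would prove both inequalities through a single algebraic reduction followed by a dyadic decomposition.

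For \eqref{2-16}, the key observation is that $\mu-\xi=(\eta-1)\tilde{\nu}(\overline{W})$, which suggests the factorization
\[
\frac{d(x,y)^{\mu}}{|B(x,d(x,y))|^{\eta}} = \frac{d(x,y)^{\xi}}{|B(x,d(x,y))|}\cdot \frac{d(x,y)^{(\eta-1)\tilde{\nu}(\overline{W})}}{|B(x,d(x,y))|^{\eta-1}}.
\]
Combining Proposition \ref{prop2-2} applied to $K=\overline{W}$ (with $\delta=\rho_{\overline{W}}$) with Proposition \ref{prop2-3} yields $|B(x,\rho)|\geq C_{1}C\,\rho^{\tilde{\nu}(\overline{W})}$ on the relevant scale, and since $\eta\geq 1$ this pointwise bounds the second factor by $(C_{1}C)^{1-\eta}$. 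It then remains to estimate $\int_{B(x,r)} d(x,y)^{\xi}/|B(x,d(x,y))|\,dy$, which I would handle by slicing $B(x,r)$ into annuli $A_{k}:=B(x,2^{-k}r)\setminus B(x,2^{-k-1}r)$, $k\geq 0$. On $A_{k}$ the integrand is at most $(2^{-k}r)^{\xi}/|B(x,2^{-k-1}r)|$, while the doubling inequality of Proposition \ref{prop2-4} gives $|A_{k}|\leq C_{3}|B(x,2^{-k-1}r)|$, so $\int_{A_{k}}\leq C_{3}(2^{-k}r)^{\xi}$. Summing the geometric series contributes $C_{3}\frac{2^{\xi}}{2^{\xi}-1}r^{\xi}$, and combining with the prefactor $(C_{1}C)^{1-\eta}$ yields \eqref{2-16}.

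For \eqref{2-17}, assumption (A) extends the volume and doubling bounds of Propositions \ref{prop2-3} and \ref{prop2-4} up to the diameter $\delta_{\overline{W}}$. I would set the threshold radius $r_{0}:=(|W|/(CC_{1}))^{1/\tilde{\nu}(\overline{W})}$, so that $|B(x,r_{0})|\geq CC_{1}\,r_{0}^{\tilde{\nu}(\overline{W})}=|W|$, and split $W=(W\cap B(x,r_{0}))\cup(W\setminus B(x,r_{0}))$. On the inner piece, positivity and \eqref{2-16} at $r=r_{0}$, combined with $r_{0}^{\xi}=(CC_{1})^{-\xi/\tilde{\nu}}|W|^{\xi/\tilde{\nu}}$ and the identity $1-\eta-\xi/\tilde{\nu}=-\mu/\tilde{\nu}$, produce exactly the contribution $\frac{C_{3}}{(CC_{1})^{\mu/\tilde{\nu}}}\cdot\frac{2^{\xi}}{2^{\xi}-1}|W|^{\xi/\tilde{\nu}}$ accounting for the ``$+1$'' in the bracket. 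On the outer piece, $d(x,y)\geq r_{0}$, so the lower bound $|B(x,d(x,y))|\geq CC_{1}\,d(x,y)^{\tilde{\nu}(\overline{W})}$ gives
\[
\frac{d(x,y)^{\mu}}{|B(x,d(x,y))|^{\eta}}\leq (CC_{1})^{-\eta}\,d(x,y)^{\xi-\tilde{\nu}(\overline{W})}.
\]
Here the hypothesis $n\eta>\mu$ enters: combined with $\tilde{\nu}(\overline{W})\geq\nu(x)\geq n$ it forces $\xi<\tilde{\nu}(\overline{W})$, so the exponent $\xi-\tilde{\nu}$ is negative and $d(x,y)^{\xi-\tilde{\nu}}\leq r_{0}^{\xi-\tilde{\nu}}$ on the outer region. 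With the crude bound $|W\setminus B(x,r_{0})|\leq|W|$ this bounds the outer integral by $(CC_{1})^{-\eta}r_{0}^{\xi-\tilde{\nu}}|W|=(CC_{1})^{-\mu/\tilde{\nu}}|W|^{\xi/\tilde{\nu}}$, which absorbs into the claimed constant using $(C_{2}/C_{1})^{\eta}\geq 1$ and $C_{3}\cdot 2^{\xi}/(2^{\xi}-1)\geq 1$.

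The principal technical obstacle is tracking the ranges in which Propositions \ref{prop2-2}--\ref{prop2-4} apply: for \eqref{2-16} all dyadic scales lie below $\rho_{\overline{W}}$, whereas \eqref{2-17} genuinely needs the extended range up to $\delta_{\overline{W}}$ supplied by assumption (A). The edge case $r_{0}>\delta_{\overline{W}}$ (very large $|W|$) must be dispatched separately: there $W\subset B(x,\delta_{\overline{W}})\subset B(x,r_{0})$, so only the inner estimate is needed and $\delta_{\overline{W}}^{\xi}\leq r_{0}^{\xi}$ immediately yields the stated form.
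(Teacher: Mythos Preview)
Your argument for \eqref{2-16} is exactly the paper's: factor using $|B(x,\rho)|\geq C_1C\,\rho^{\tilde\nu(\overline W)}$, reduce to the $\eta=1$ integral, then run the dyadic annular decomposition with doubling. For \eqref{2-17}, however, you and the paper diverge on the outer piece. The paper chooses $r_0$ implicitly via $|B(x,r_0)|=|W|$, uses the measure identity $|W\setminus B(x,r_0)|=|B(x,r_0)\setminus W|$, and then invokes the monotonicity of $t\mapsto t^\mu/\Lambda(x,t)^\eta$ (this is where $n\eta>\mu$ enters for them, since the minimal degree of $\Lambda(x,\cdot)$ is at least $n$) to swap the outer integral over $W\setminus B(x,r_0)$ for an integral over $B(x,r_0)\setminus W$ with a factor $(C_2/C_1)^\eta$; the two pieces then reassemble into $\int_{B(x,r_0)}$ and \eqref{2-16} finishes. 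Your route is more elementary: you fix $r_0$ explicitly by $(|W|/(CC_1))^{1/\tilde\nu}$, bound the outer integrand pointwise by $(CC_1)^{-\eta}d(x,y)^{\xi-\tilde\nu}$ using only the lower volume bound, and use $n\eta>\mu$ together with $\tilde\nu\geq n$ solely to force $\xi<\tilde\nu$ so that this decays. This avoids the measure-swap trick and the $\Lambda$-monotonicity entirely, at the cost of a slightly cruder constant that you then absorb into the stated bracket via $(C_2/C_1)^\eta\geq1$ and $C_3\cdot 2^\xi/(2^\xi-1)\geq1$. Both arguments are correct; the paper's reassembly explains the precise shape of the constant $[(C_2/C_1)^\eta+1]$, while yours is shorter and shows the structural hypothesis $n\eta>\mu$ is really only needed to make $\xi<\tilde\nu$.
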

\begin{proof}
For any $x\in \overline{W}$, $y\in B(x,r)$ and $0<r\leq \rho_{\overline{W}}$, we have $d(x,y)<\rho_{\overline{W}}$. Proposition \ref{prop2-2} and \eqref{2-10} imply that
  $d(x,y)^\mu\leq (C_{1}C)^{1-\eta} d(x,y)^\xi|B(x,d(x,y))|^{\eta-1}$. Then, using \eqref{2-14} we get
  \begin{equation*}
    \begin{aligned}
   \int_{B(x,r)}\frac{d(x,y)^\mu}{|B(x,d(x,y))|^{\eta}}dy&\leq    (C_{1}C)^{1-\eta}\int_{B(x,r)}\frac{d(x,y)^\xi}{|B(x,d(x,y))|}dy\\
    &= (C_{1}C)^{1-\eta}\sum_{k=0}^{\infty}\int_{\{y\in U|\frac{r}{2^{k+1}}\leq d(x,y)<\frac{r}{2^{k}}\}}\frac{d(x,y)^\xi}{|B(x,d(x,y))|}dy\\
    &\leq (C_{1}C)^{1-\eta}\sum_{k=0}^{\infty}\int_{\{y\in U|\frac{r}{2^{k+1}}\leq d(x,y)< \frac{r}{2^{k}}\}}\frac{1}{|B\left(x,\frac{r}{2^{k+1}}\right)|}\left( \frac{r}{2^{k}}\right)^\xi dy\\
    &\leq (C_{1}C)^{1-\eta}\sum_{k=0}^{\infty}\frac{|B\left(x,\frac{r}{2^{k}}\right)|}{|B\left(x,\frac{r}{2^{k+1}}\right)|}\left( \frac{r}{2^{k}}\right)^\xi \leq (C_{1}C)^{1-\eta}C_{3}\sum_{k=0}^{\infty}\left( \frac{r}{2^{k}}\right)^\xi\\
    &= (C_{1}C)^{1-\eta}C_{3}\frac{2^\xi}{2^{\xi}-1}r^\xi,
    \end{aligned}
  \end{equation*}
 which yields \eqref{2-16}. Additionally, if the assumption (A) is satisfied, we can derive from Proposition \ref{prop2-4} that \eqref{2-16} holds for all $x\in \overline{W}$ and $0<r\leq \max\{\rho_{\overline{W}},\delta_{\overline{W}}\}$, where $\delta_{\overline{W}}=\sup_{x,y\in \overline{W}}d(x,y)$ denotes the diameter of $\overline{W}$ with respect to the subunit metric $d$.

Next, we construct the estimate \eqref{2-17} under assumption (A) and $n\eta>\mu$. For any $x\in \overline{W}$,  we can choose a suitable radius $r_0>0$ such that $|B(x,r_0)|=|W|$. Then, we have
\begin{equation}
\begin{aligned}\label{2-18}
   |W\setminus B(x,r_0)|&=|W\setminus (W\cap B(x,r_0))|=|W|-|W\cap B(x,r_0)|\\
   &=|B(x,r_0)|-|W\cap B(x,r_0)|=|B(x,r_0)\setminus W|.
   \end{aligned}
\end{equation}
 We claim that $r_{0}\leq   \delta_{\overline{W}}=\sup_{x,y\in \overline{W}}d(x,y)$. Indeed,   if $r_{0}> \delta_{\overline{W}}$, then for any $y\in \overline{W}$, we have $d(x,y)\leq \delta_{\overline{W}}<r_{0}$, which implies that $\overline{W}\subset B(x,r_0)$ and $|W|<|B(x,r_0)|$. This contradicts the fact that $|B(x,r_0)|=|W|$.

If $n\eta>\mu$, by \eqref{2-3} we see that $t\mapsto \frac{t^\mu}{\Lambda(x,t)^\eta} $ is a decreasing function on $\mathbb{R}^{+}$. It follows from Proposition \ref{prop2-3} and \eqref{2-18} that, for any $x\in \overline{W}$,
\begin{equation}\label{2-19}
\begin{aligned}
  \int_{W\setminus B(x,r_0)}\frac{d(x,y)^\mu}{|B(x,d(x,y))|^{\eta}}dy&\leq C_{1}^{-\eta}\int_{W\setminus B(x,r_0)}\frac{d(x,y)^\mu}{\Lambda(x,d(x,y))^{\eta}}dy\leq C_{1}^{-\eta}|W\setminus B(x,r_0)| \frac{r_{0}^\mu}{\Lambda(x,r_{0})^\eta}\\
  &=C_{1}^{-\eta}|B(x,r_0)\setminus W| \frac{r_{0}^\mu}{\Lambda(x,r_{0})^\eta}
  \leq C_{1}^{-\eta}\int_{B(x,r_0)\setminus W}\frac{d(x,y)^\mu}{\Lambda(x,d(x,y))^{\eta}}dy\\
  &  \leq \left(\frac{C_{2}}{C_{1}}\right)^{\eta}\int_{B(x,r_0)\setminus W}\frac{d(x,y)^\mu}{|B(x,d(x,y))|^{\eta}}dy.
  \end{aligned}
\end{equation}
 Therefore, we conclude from  Proposition \ref{prop2-2} and Proposition \ref{prop2-3}, \eqref{2-16}, and \eqref{2-19} that
\[ \begin{aligned}
\int_{W} \frac{d(x,y)^\mu}{|B(x,d(x,y))|^{\eta}}dy &=\int_{W\setminus B(x,r_0)}\frac{d(x,y)^\mu}{|B(x,d(x,y))|^{\eta}}dy+\int_{W\cap B(x,r_0)} \frac{d(x,y)^\mu}{|B(x,d(x,y))|^{\eta}}dy\\
&\leq \left[\left(\frac{C_{2}}{C_{1}}\right)^{\eta}+1\right]\int_{B(x,r_0)}\frac{d(x,y)^\mu}{|B(x,d(x,y))|^{\eta}}dy\\
&\leq \left[\left(\frac{C_{2}}{C_{1}}\right)^{\eta}+1\right]\frac{C_{3}}{(CC_{1})^{\eta-1}}\frac{2^\xi}{2^{\xi}-1}r_{0}^\xi\\
&\leq \left[\left(\frac{C_{2}}{C_{1}}\right)^{\eta}+1\right]\frac{C_{3}}{(CC_{1})^{\frac{\mu}{\tilde{\nu}}}}\frac{2^\xi}{2^{\xi}-1}|W|^\frac{\xi}{\tilde{\nu}(\overline{W})},
\end{aligned}\]
which gives \eqref{2-17}.
\end{proof}

\subsection{Degenerate Friedrichs-Poincar\'{e} type inequality}

For H\"ormander vector fields, the Poincar\'{e}-Wirtinger type inequality has attracted considerable attention in the literature since Jerison's work \cite{Jerison1986duke}. Further investigations have been carried out by Saloff-Coste \cite{Saloff-Coste1992}, Garofalo-Nhieu \cite{Garofalo1996}, and Haj{\l}asz-Koskela \cite{Hajlasz2000}, among others.  However, when addressing the Dirichlet problems of degenerate elliptic equations, we require the following Friedrichs-Poincar\'{e} type inequality, which is entirely different from the Poincar\'{e}-Wirtinger type inequality and much less known.

 \begin{proposition}[Degenerate Friedrichs-Poincar\'{e} Inequality]
\label{prop2-6}
Let $X=(X_{1},X_{2},\ldots,X_{m})$ be the smooth vector fields defined on $U$, satisfying the
H\"{o}rmander's condition $(H)$. For any open bounded subset $W\subset\subset U$ and positive number $p\geq 1$, there exists a positive constant $C>0$ such that
\begin{equation}\label{2-20}
 \int_{W}{|u|^pdx}\leq C\int_{W}|Xu|^pdx,\qquad \forall u\in \mathcal{W}_{X,0}^{1,p}(W).
  \end{equation}
\end{proposition}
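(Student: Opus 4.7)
The plan is to derive \eqref{2-20} by reducing it, via a power-function trick, to the previously-established suboptimal Sobolev inequality \eqref{1-4} of Capogna-Danielli-Garofalo. Since \eqref{1-4} was proven by partition-of-unity arguments without dependence on any Friedrichs-Poincar\'{e} inequality, using it here introduces no circularity. By the definition of $\mathcal{W}_{X,0}^{1,p}(W)$ as the closure of $C_{0}^{\infty}(W)$, it suffices to verify \eqref{2-20} for $u \in C_{0}^{\infty}(W)$. The key observation is that, rather than applying \eqref{1-4} directly at exponent $p$ (which would restrict to $p<Q$), one applies \eqref{1-4} at exponent $1$ to the auxiliary function $v := |u|^p$.

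Concretely, since $u$ is smooth with compact support in $W$ and $p \geq 1$, the function $v = |u|^p$ is Lipschitz with compact support, hence belongs to $\mathcal{W}_{X,0}^{1,1}(W)$ by mollification, and satisfies $Xv = p|u|^{p-1}\mathrm{sgn}(u)\,Xu$ almost everywhere by the chain rule from Section \ref{section2}. Applying \eqref{1-4} with $p$ replaced by $1$ to $v$ produces
\[ \left(\int_{W}|u|^{pQ/(Q-1)}\,dx\right)^{(Q-1)/Q} \leq C\|Xv\|_{L^{1}(W)} = Cp\int_{W}|u|^{p-1}|Xu|\,dx. \]
On the right, H\"{o}lder's inequality with exponents $p/(p-1)$ and $p$ gives $\int_{W}|u|^{p-1}|Xu|\,dx \leq \|u\|_{L^p(W)}^{p-1}\|Xu\|_{L^p(W)}$, while on the left, H\"{o}lder's inequality with exponents $Q/(Q-1)$ and $Q$ yields $\int_{W}|u|^{p}\,dx \leq |W|^{1/Q}\bigl(\int_{W}|u|^{pQ/(Q-1)}\,dx\bigr)^{(Q-1)/Q}$. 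Chaining these three inequalities and dividing by $\|u\|_{L^p(W)}^{p-1}$ (in the non-trivial case $u \not\equiv 0$) yields $\|u\|_{L^p(W)} \leq Cp\,|W|^{1/Q}\|Xu\|_{L^p(W)}$, which is exactly \eqref{2-20} after raising to the $p$-th power.

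The only technical subtlety is verifying $v = |u|^p \in \mathcal{W}_{X,0}^{1,1}(W)$ together with the chain rule formula for $Xv$. However, this is routine: since $|u|^p$ is Lipschitz with support in a compact subset of $W$, standard mollification (or the regularization $u_{\varepsilon} := \sqrt{u^2+\varepsilon^2}$ followed by a cutoff) produces smooth compactly supported approximants converging in the $\mathcal{W}_{X}^{1,1}$-norm, and the chain rule results from the preliminaries of Section \ref{section2} provide the required almost everywhere identity. No essentially new obstacle arises; the argument is a short calculation once the power trick is identified. Note that $Q \geq n \geq 2 > 1$ under the paper's standing hypotheses, so the exponent $Q/(Q-1)$ in the application of \eqref{1-4} at $p = 1$ is well-defined, and the resulting constant in \eqref{2-20} takes the explicit form $(Cp\,|W|^{1/Q})^p$.
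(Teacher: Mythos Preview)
Your argument has a genuine gap at the step where you invoke \eqref{1-4}. As stated in the paper, \eqref{1-4} is the continuous embedding $\mathcal{W}_{X,0}^{1,1}(W)\hookrightarrow L^{Q/(Q-1)}(W)$, which means
\[
\|v\|_{L^{Q/(Q-1)}(W)}\leq C\|v\|_{\mathcal{W}_{X,0}^{1,1}(W)}=C\big(\|v\|_{L^{1}(W)}+\|Xv\|_{L^{1}(W)}\big),
\]
not the ``pure'' inequality $\|v\|_{L^{Q/(Q-1)}(W)}\leq C\|Xv\|_{L^{1}(W)}$ that you use. Indeed, the paper states that \eqref{1-4} is obtained from the local inequality \eqref{1-3} by ``standard partition of unity arguments''; when you write $v=\sum_i\psi_i v$ and apply \eqref{1-3} to each $\psi_i v$, the product rule $X(\psi_i v)=\psi_i Xv+(X\psi_i)v$ unavoidably produces a term controlled by $\|v\|_{L^1}$. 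Carrying this extra term through your computation yields
\[
\|u\|_{L^p(W)}\leq C|W|^{1/Q}\|u\|_{L^p(W)}+Cp\,|W|^{1/Q}\|Xu\|_{L^p(W)},
\]
which can be absorbed only when $C|W|^{1/Q}<1$, i.e.\ when $W$ is sufficiently small. For an arbitrary bounded $W\subset\subset U$ the absorption fails, and you are left needing exactly the Friedrichs--Poincar\'e inequality you set out to prove. So the argument is either circular or restricted to small domains.

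By contrast, the paper's proof avoids this trap entirely: it argues by contradiction, extracting from a minimizing sequence a limit $\varphi_0\in\mathcal{W}_{X,0}^{1,p}(W)$ with $X\varphi_0=0$ via Danielli's compact embedding $\mathcal{W}_{X,0}^{1,p}(W)\hookrightarrow L^p(W)$, then uses H\"ormander's condition to force $\varphi_0$ to be a nonzero constant on $W$, and finally derives a contradiction from the hypoellipticity of $\triangle_X$ applied to the zero-extension of $\varphi_0$ to $U$. This soft compactness-plus-hypoellipticity route makes no appeal to any Sobolev inequality on $W$ and therefore works for arbitrary bounded $W$.
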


 The statement of \eqref{2-20} originated from \cite[Lemma 5]{Xu90} assumes the smoothness of the boundary $\partial W$   and the existence of at least one vector field $X_{j}~ (1\leq j\leq m)$ that can be globally straightened in $W$. Moreover, for the case when $p=2$, \eqref{2-20} was also discussed in \cite[Lemma 3.2]{jost1998} and \cite[Proposition 2.1]{chen-chen2020}, provided there exists an additional non-characteristic condition on the smooth boundary of $W$.
  However, it's worth noting that for general H\"{o}rmander vector fields, the characteristic set in the boundary may not be empty, even for a smooth domain (e.g., a unit ball in the Heisenberg group). Therefore, the smoothness and non-characteristic assumptions might be too restrictive in many degenerate situations. To eliminate these limitations, we propose a generalization of \eqref{2-20} to the arbitrary open bounded subsets $U_1$, without imposing any additional assumptions on its boundary. This generalization extends the applicability of the Friedrichs-Poincar\'{e} inequality encompass a wider range of scenarios.

\begin{proof}[Proof of Proposition \ref{prop2-6}]
We prove \eqref{2-20} by contradiction. Suppose that
\[ \inf_{\|\varphi\|_{L^{p}(W)}=1,~~ \varphi\in \mathcal{W}_{X,0}^{1,p}(W)}\|X\varphi\|^p_{L^p(W)}=0.\]
 Then there is a sequence
$\{\varphi_{j}\}_{j=1}^{\infty}$ in  $\mathcal{W}_{X,0}^{1,p}(W)$ such that
$\|X\varphi_{j}\|_{L^p(W)}\to 0$ with
$\|\varphi_{j}\|_{L^p(W)}=1$.   \cite[Corollary 3.3]{Danielli1991} indicates that $\mathcal{W}_{X,0}^{1,p}(W)$ is compactly embedded into $L^p(W)$ for $p\geq 1$. This allows us to select a subsequence $\{\varphi_{j_{i}}\}_{i=1}^{\infty}\subset \{\varphi_{j}\}_{j=1}^{\infty}\subset \mathcal{W}_{X,0}^{1,p}(W)$ such that $\varphi_{j_{i}}\to \varphi_{0}$ in $L^{p}(W)$ and $\varphi_{0}\in L^{p}(W)$ with $\|\varphi_0\|_{L^{p}(W)}=1$. Now, for any $1\leq l\leq m$ and $u\in C_{0}^{\infty}(W)$, we have
\begin{equation}\label{2-21}
  \int_{W}\varphi_{0}X_{l}^{*}udx=\lim_{i\to \infty}\int_{W}\varphi_{j_{i}}X_{l}^{*}udx=\lim_{i\to \infty}\int_{W}uX_{l}\varphi_{j_{i}}dx=0.
\end{equation}
Hence, $X\varphi_0=0$, $\varphi_{j_{i}}\to \varphi_{0}$ in $\mathcal{W}_{X,0}^{1,p}(W)$,  and $\varphi_0\in \mathcal{W}_{X,0}^{1,p}(W)$. Denote by $\triangle_{X}:=-\sum_{j=1}^{m}X_{j}^{*}X_{j}$ the formal self-adjoint H\"{o}rmander operator associated with $X=(X_{1},X_{2},\ldots,X_m)$.  Substituting $u$ with $X_{l}u$
in \eqref{2-21} yields:
\begin{equation*}
 (\varphi_{0},\triangle_{X}u)_{L^{2}(W)}=-\sum_{l=1}^{m}\int_{W}\varphi_{0}X_{l}^{*}X_{l}udx=0,
\end{equation*}
which implies $\triangle_{X}\varphi_0=0$ in $\mathcal{D}'(W)$. The hypoellipticity of  $\triangle_{X}$ yields that $\varphi_{0}\in C^{\infty}(W)$. Moreover, since $X_{j}\varphi_0=0$ on $W$ for $1\leq j\leq m$ and $\|\varphi_{0}\|_{L^p(W)}=1$, the  H\"{o}rmander's condition implies that $\partial_{x_{j}}\varphi_0=0$ on $W$ for $1\leq j\leq n$. That means  $\varphi_0$ must be a non-zero constant on $W$.

Next, we choose the sequence $\{u_k\}_{k=1}^{\infty}\subset C_{0}^{\infty}(W)$ such that $u_{k}\to \varphi_{0}$ in $\mathcal{W}_{X,0}^{1,p}(W)$, and we denote by
\[ \overline{u_{k}}:=\left\{
                     \begin{array}{ll}
                      u_{k}, & \hbox{on $W$,} \\
                       0, & \hbox{on $U\setminus W$;}
                     \end{array}
                   \right.\qquad \mbox{and}\qquad\overline{\varphi_{0}}:=\left\{
                     \begin{array}{ll}
                       \varphi_{0}, & \hbox{on $W$,} \\
                       0, & \hbox{on $U\setminus W$.}
                     \end{array}
                   \right.\]
It follows that $\{\overline{u_{k}}\}_{k=1}^{\infty}\subset C_{0}^{\infty}(U)$ is a Cauchy sequence in $\mathcal{W}_{X,0}^{1,p}(U)$ with $\overline{u_{k}}\to \overline{\varphi_{0}}$ in $L^p(U)$. As a result, we have
$\overline{\varphi_{0}}\in \mathcal{W}_{X,0}^{1,p}(U)$,
$ \|\overline{u_{k}}-\overline{\varphi_{0}}\|_{\mathcal{W}_{X,0}^{1,p}(U)}=\|u_{k}-\varphi_{0}\|_{\mathcal{W}_{X,0}^{1,p}(W)}\to 0$ and
\[ \int_{U}|X\overline{\varphi_{0}}|^{p}dx=\int_{W}|X\varphi_{0}|^{p}dx=0.\]
Let $q=\frac{p}{p-1}$ for $p>1$ and $q=\infty$ for $p=1$. For any $v\in C_{0}^{\infty}(U)$, we have
\begin{equation*}
  |(\overline{\varphi_0},\triangle_{X}v)_{L^2(U)}|\leq \sum_{j=1}^{m}|(X_{j}\overline{\varphi_0},X_{j}v)_{L^2(U)}|\leq \sum_{j=1}^{m}\|X_{j}\overline{\varphi_0}\|_{L^{p}(U)}\cdot \|X_{j}v\|_{L^{q}(U)}=0,
\end{equation*}
and therefore $\triangle_{X}\overline{\varphi_0}=0$ in $\mathcal{D}'(U)$. The hypoellipticity of  $\triangle_{X}$ also gives $\overline{\varphi_{0}}\in C^{\infty}(U)$, which leads a contradiction since $\overline{\varphi_{0}}$ is not smooth across $\partial W$.
\end{proof}

\subsection{Chain rules in generalized Sobolev spaces}

Before delving into the chain rules in the generalized Sobolev spaces, we  give the following useful
 proposition.
 \begin{proposition}
\label{prop2-7}
Let  $\Omega\subset\subset U$ be a bounded open subset. For $p\geq 1$ and any $u\in \mathcal{W}_{X}^{1,p}(U)$, if $\text{\rm supp}~u$ is a compact subset of $\Omega$, then $u\in \mathcal{W}_{X,0}^{1,p}(\Omega)$.
\end{proposition}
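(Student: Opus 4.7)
The plan is to approximate $u$ by Friedrichs mollifications $u_\varepsilon := u*\rho_\varepsilon$ and verify that $u_\varepsilon \in C_0^\infty(\Omega)$ while $u_\varepsilon \to u$ in the $\mathcal{W}_X^{1,p}(U)$-norm; membership of $u$ in $\mathcal{W}_{X,0}^{1,p}(\Omega)$ then follows immediately from the definition. Fix a standard mollifier $\rho_\varepsilon(x) := \varepsilon^{-n}\rho(x/\varepsilon)$ with $\rho \in C_0^\infty(\mathbb{R}^n)$, $\rho \geq 0$, $\text{supp}\,\rho \subset \{|x|\leq 1\}$ and $\int \rho\, dx = 1$. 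Set $K := \text{supp}\, u$; by hypothesis $K$ is a compact subset of $\Omega$, so there exists $\varepsilon_0 > 0$ such that the Euclidean $\varepsilon_0$-neighbourhood $K_{\varepsilon_0}$ is still a compact subset of $\Omega$. For every $0 < \varepsilon < \varepsilon_0$, the mollification $u_\varepsilon$ belongs to $C^\infty(\mathbb{R}^n)$ with $\text{supp}\, u_\varepsilon \subset K_\varepsilon \subset \Omega$, so $u_\varepsilon \in C_0^\infty(\Omega)$.

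The convergence $u_\varepsilon \to u$ in $L^p(U)$ is classical since $u \in L^p(U)$. The nontrivial task is to prove $X_j u_\varepsilon \to X_j u$ in $L^p(U)$ for each $1 \leq j \leq m$. Writing $X_j = \sum_{k=1}^n b_{jk}(x)\partial_{x_k}$ with smooth coefficients, I would split
\[
X_j u_\varepsilon \;=\; (X_j u)*\rho_\varepsilon \;+\; \bigl[X_j(u*\rho_\varepsilon) - (X_j u)*\rho_\varepsilon\bigr].
\]
Since $X_j u \in L^p(U)$, the first summand converges to $X_j u$ in $L^p(U)$, so it remains to show that the commutator $R_\varepsilon(u) := X_j(u*\rho_\varepsilon) - (X_j u)*\rho_\varepsilon$ vanishes in $L^p(U)$. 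A direct integration by parts using the distributional definition of $X_j u$ (with formal adjoint $X_j^* = -X_j - \text{div}\,X_j$) yields the Friedrichs commutator identity
\[
\begin{aligned}
R_\varepsilon(u)(x) = {} & \sum_{k=1}^n \int \bigl[b_{jk}(y) - b_{jk}(x)\bigr]\, u(y)\, \partial_{y_k}\rho_\varepsilon(x-y)\, dy \\
& {}+ \int u(y)\,(\text{div}\, X_j)(y)\, \rho_\varepsilon(x-y)\, dy.
\end{aligned}
\]

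The main obstacle is the $L^p$-convergence $R_\varepsilon(u) \to 0$, which I would establish in two steps. First, smoothness of the $b_{jk}$ on the compact set $K_{\varepsilon_0}$ gives $|b_{jk}(y) - b_{jk}(x)| \leq M|x-y|$, so the first integral is dominated by a convolution of $|u|$ with the kernel $|w|\,|\partial_{y_k}\rho_\varepsilon(w)|$, whose $L^1$-norm equals $\int |z|\,|\partial_k\rho(z)|\,dz$ independently of $\varepsilon$; Young's inequality then yields the uniform bound $\|R_\varepsilon(u)\|_{L^p(U)} \leq C\|u\|_{L^p(U)}$. Second, the pointwise limit is computed on a dense class $u \in C_0^\infty(\Omega)$ by rescaling $z = (x-y)/\varepsilon$ and Taylor-expanding $b_{jk}$: combined with $\int z_k(\partial_k\rho)(z)\,dz = -1$, this shows the first integral converges to $-u(x)(\text{div}\, X_j)(x)$ and exactly cancels the limit $u(x)(\text{div}\, X_j)(x)$ of the second integral. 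The uniform operator bound together with density of $C_0^\infty(\Omega)$ in $L^p$ transfers this cancellation to every $u \in L^p(U)$, giving $R_\varepsilon(u) \to 0$ in $L^p(U)$ and completing the argument.
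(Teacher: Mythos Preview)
Your argument is correct; the Friedrichs commutator identity and the uniform $L^p$ bound via Young's inequality are set up properly, and the density argument closes the loop. The proof is, however, a genuinely different route from the paper's.

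The paper avoids the mollification commutator entirely. It fixes a cutoff $f\in C_0^\infty(\Omega)$ with $f\equiv 1$ on $\operatorname{supp} u$, invokes the Meyers--Serrin theorem for $\mathcal{W}_X^{1,p}(\Omega)$ (cited from \cite{Garofalo1996}) to produce $\psi_i\in C^\infty(\Omega)\cap\mathcal{W}_X^{1,p}(\Omega)$ with $\psi_i\to u$, and then observes that $f\psi_i\in C_0^\infty(\Omega)$ converges to $fu=u$ in $\mathcal{W}_X^{1,p}(\Omega)$ by the product rule. This is considerably shorter because the hard analytic work (which is essentially your Friedrichs lemma) is packaged inside the Meyers--Serrin citation. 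Your approach, by contrast, is self-contained: you redo the commutator estimate from scratch, which is exactly the key step hidden inside any proof of Meyers--Serrin for variable-coefficient vector fields. So the two proofs are morally the same computation at different levels of abstraction---the paper outsources it, you carry it out explicitly. Your version has the advantage of not depending on an external reference, at the cost of length; the paper's version is cleaner for exposition once Meyers--Serrin is on hand.
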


\begin{proof}
Clearly, $u\in \mathcal{W}_{X}^{1,p}(U)$ implies $u\in \mathcal{W}_{X}^{1,p}(\Omega)$. Since $\text{supp}~u$ is a compact subset in $\Omega$, there exists a function $f\in C_{0}^{\infty}(\Omega)$ such that $f\equiv 1$ on $\text{supp}~u$ and $\text{supp}~f\subset  \Omega $. Owing to the Meyers-Serrin's theorem (see \cite[Theorem 1.13]{Garofalo1996}), we can find  a sequence $\{\psi_{i}\}_{i=1}^{\infty}\subset C^{\infty}(\Omega)\cap \mathcal{W}_{X}^{1,p}(\Omega)$ such that $\psi_{i}\to u$ in $\mathcal{W}_{X}^{1,p}(\Omega)$. Observing that $f\psi_{i}\in C_{0}^{\infty}(\Omega)$ and
\[ \begin{aligned}
\|f\psi_i-u\|_{\mathcal{W}_{X}^{1,p}(\Omega)}^{p}&=\|f\psi_i-fu\|_{\mathcal{W}_{X}^{1,p}(\Omega)}^{p}= \|X(f(\psi_i-u))\|_{L^p( \Omega )}^{p}+\|f(\psi_i-u)\|_{L^p( \Omega )}^{p}\\
&\leq C(\|X(\psi_i-u)\|_{L^p(\Omega)}^p+\|\psi_i-u\|_{L^p(\Omega)}^p)\to 0
\end{aligned}
\]
as $i\to \infty$, we conclude that $u\in \mathcal{W}_{X,0}^{1,p}( \Omega)$.
\end{proof}

Then, we have

\begin{proposition}[Chain rules]
\label{prop2-8}
Let  $U_{1}$ be an open subset of $U$. Suppose that $F\in C^{1}(\mathbb{R})$ with $F'\in L^{\infty}(\mathbb{R})$. Then for any $u\in \mathcal{W}^{1,p}_X(U_{1})$ with $p\geq 1$, we have
\begin{equation}\label{2-22}
  X_{j}(F(u))=F'(u)X_{j}u\qquad \mbox{in}~~\mathcal{D}'(U_{1})\qquad \mbox{for}~~j=1,\ldots,m.
\end{equation}
Moreover,
\begin{enumerate}[(1)]
\item if $F(0)=0$, then $F(u)\in \mathcal{W}^{1,p}_X(U_{1})$;
  \item if $F(0)=0$ and $u\in \mathcal{W}_{X,0}^{1,p}(U_{1})$, then $F(u)\in \mathcal{W}^{1,p}_{X,0}(\Omega)$.
\end{enumerate}
\end{proposition}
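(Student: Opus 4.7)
The plan is a three-step approximation argument: verify the chain rule for smooth functions, lift it to $\mathcal{W}_X^{1,p}(U_1)$ by a Meyers--Serrin density plus dominated convergence, and then invoke Proposition \ref{prop2-7} to obtain the ``zero-trace'' version in claim (2). First I would use the Meyers--Serrin theorem for generalized Sobolev spaces (the same density result invoked in the proof of Proposition \ref{prop2-7}) to pick $u_k \in C^\infty(U_1) \cap \mathcal{W}_X^{1,p}(U_1)$ with $u_k \to u$ in $\mathcal{W}_X^{1,p}(U_1)$. Since each $u_k$ is $C^1$, the classical chain rule gives $X_j(F(u_k)) = F'(u_k)\, X_j u_k$ pointwise, and the hypothesis $F' \in L^\infty(\mathbb{R})$ makes $F$ globally Lipschitz with constant $L := \|F'\|_{L^\infty}$.

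Lipschitzness immediately yields $\|F(u_k) - F(u)\|_{L^p(U_1)} \leq L \|u_k - u\|_{L^p(U_1)} \to 0$; after passing to a subsequence I may also assume $u_k \to u$ a.e.\ in $U_1$. To handle the product on the right-hand side I split
\[
F'(u_k) X_j u_k - F'(u) X_j u \;=\; F'(u_k)\bigl(X_j u_k - X_j u\bigr) \;+\; \bigl(F'(u_k) - F'(u)\bigr) X_j u.
\]
The first piece is bounded in $L^p$ by $L\|X_j u_k - X_j u\|_{L^p(U_1)} \to 0$. For the second, continuity of $F'$ and $u_k \to u$ a.e.\ give $F'(u_k) \to F'(u)$ a.e., while the uniform bound $|(F'(u_k) - F'(u)) X_j u|^p \leq (2L)^p |X_j u|^p \in L^1(U_1)$ lets Lebesgue dominated convergence conclude convergence to $0$ in $L^p(U_1)$. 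Testing against $\varphi \in C_0^\infty(U_1)$ in the smooth identity $\int_{U_1} F(u_k) X_j^* \varphi\, dx = \int_{U_1} F'(u_k) X_j u_k \cdot \varphi\, dx$ and passing to the limit yields the distributional formula \eqref{2-22}.

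For claim (1), the assumption $F(0)=0$ combined with the Lipschitz bound gives $|F(u)| \leq L|u| \in L^p(U_1)$, and $X_j F(u) = F'(u) X_j u \in L^p(U_1)$ is immediate from $|F'|\leq L$; hence $F(u) \in \mathcal{W}_X^{1,p}(U_1)$. For claim (2), I choose $\phi_k \in C_0^\infty(U_1)$ with $\phi_k \to u$ in $\mathcal{W}_X^{1,p}(U_1)$. Because $F(0) = 0$, each $F(\phi_k)$ is $C^1$ with support contained in the compact set $\text{supp}(\phi_k) \subset U_1$, and after trivial extension by zero lies in $\mathcal{W}_X^{1,p}(U)$; Proposition \ref{prop2-7} then places $F(\phi_k) \in \mathcal{W}_{X,0}^{1,p}(U_1)$. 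Repeating the convergence analysis of the previous paragraph with $u_k$ replaced by $\phi_k$ shows $F(\phi_k) \to F(u)$ in $\mathcal{W}_X^{1,p}(U_1)$, and closedness of $\mathcal{W}_{X,0}^{1,p}(U_1)$ delivers the conclusion. The one delicate step I would budget most care for is the passage to the limit in the product $F'(u_k) X_j u_k$: since only $L^p$ (not pointwise) control on $u_k$ is available a priori, one must first extract an a.e.-convergent subsequence before dominated convergence can be applied to the split above.
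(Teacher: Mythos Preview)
Your argument is correct and follows essentially the same route as the paper: Meyers--Serrin approximation, the Lipschitz bound for $L^p$ convergence of $F(u_k)$, extraction of an a.e.-convergent subsequence, the same two-term splitting of $F'(u_k)X_ju_k - F'(u)X_ju$, and dominated convergence. The only cosmetic difference is in claim~(2): the paper notes directly that $F(\phi_k)\in C_0^1(U_1)\subset \mathcal{W}_{X,0}^{1,p}(U_1)$, whereas you route through Proposition~\ref{prop2-7}; both achieve the same end.
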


\begin{proof}
The Meyers-Serrin theorem (see \cite[Theoerm 1.13]{Garofalo1996}) tells us that for any $u\in \mathcal{W}_{X}^{1,p}(U_{1})$, there exists $\{u_ {k}\}_{k=1}^{\infty}\subset C^{\infty}(U_{1})\cap \mathcal{W}_{X}^{1,p}(U_{1})$ such that $u_{k}\to u$ in $\mathcal{W}_{X}^{1,p}(U_{1})$. Since for any $x\in \mathbb{R}$
  \[|F(x)|\leq |F(x)-F(0)|+|F(0)|\leq \|F'\|_{L^{\infty}(\mathbb{R})} |x|+|F(0)|,\]
it follows that $F(u),F(u_k)\in L^p_{\rm loc}(U_{1})\subset \mathcal{D}'(U_{1})$. Additionally,
  \begin{equation}\label{2-23}
    \int_{U_{1}}|F(u_k)-F(u)|^pdx\leq \|F'\|_{L^{\infty}(\mathbb{R})}^p\int_{U_{1}}|u_k-u|^pdx\to0,
  \end{equation}
which implies $F(u_k)\to F(u)$ in $L^p(U_{1})$.  Recalling that $u_k\to u$ in $L^p(U_{1})$,  there is a subsequence $\{u_{k_{i}}\}_{i=1}^{\infty}$ that converges to $u$ almost everywhere on $U_{1}$. Thus,  $F'(u_{k_{i}})\to F'(u)$ almost everywhere on $U_{1}$. For each $1\leq j\leq m$, applying the dominated convergence theorem, we obtain
\begin{equation}
  \begin{aligned}\label{2-24}
   &\int_{U_{1}}|X_j (F(u_{k_{i}}))-F'(u)X_ju|^pdx=\int_{U_{1}}|F'(u_{k_{i}})X_ju_{k_{i}}-F'(u)X_ju|^pdx\\
    &\leq C\int_{U_{1}}|F'(u_{k_{i}})X_{j}u_{k_{i}}-F'(u_{k_{i}})X_{j}u|^pdx
    +C\int_{U_{1}}|F'(u_{k_{i}})X_{j}u-F'(u)X_{j}u|^pdx \\
     & \leq C \|F'\|_{L^{\infty}(\mathbb{R})}^p\int_{U_{1}}|X_ju_{k_{i}}-X_ju|^pdx+C\int_{U_{1}}|F'(u_{k_{i}})-F'(u)|^p|X_ju|^pdx\to 0,
  \end{aligned}
  \end{equation}
  which implies that $X_j(F(u_{k_{i}}))=F'(u_{k_{i}})X_{j}u_{k_{i}}\in L^p(U_{1})$ converges to  $F'(u)X_ju$ in $L^p(U_{1})$. Therefore, $F'(u_{k_{i}})X_{j}u_{k_{i}}\to F'(u)X_ju$ in $\mathcal{D}'(U_1)$.  Observing that for any $\varphi\in C_0^\infty(U_{1})$, we have
\begin{equation}\label{2-25}
\int_{U_{1}}F'(u_{k_{i}})X_ju_{k_{i}}\varphi dx=\int_{U_{1}}X_j(F(u_{k_{i}}))\varphi dx=\int_{U_{1}}F(u_{k_{i}})X_j^*\varphi dx.
\end{equation}
Letting $i\to \infty$ in \eqref{2-25} and using $F(u_{k_{i}})\to F(u)$  in $\mathcal{D'}(U_{1})$, we obtain
\[ \int_{U_{1}}F'(u)X_ju\varphi dx=\int_{U_{1}}F(u)X_j^*\varphi dx\qquad \forall \varphi\in C_0^\infty(U_{1}).\]
  Hence, \eqref{2-22} is proved. \par

  If $F(0)=0$, then $F(u)\in L^p(U_{1})$. It derives from \eqref{2-22} that
 \[\|XF(u)\|_{L^p(U_1)}\leq \|F'\|_{L^{\infty}(\mathbb{R}^n)}\|Xu\|_{L^p(U_1)}<\infty.\] Thus, $F(u)\in \mathcal{W}_{X}^{1,p}(U_{1})$. Suppose further that $u\in \mathcal{W}_{X,0}^{1,p}(U_{1})$, then we can choose an approximating sequence $\{u_k\}_{k=1}^{\infty}\subset C_0^\infty(U_{1})$ such that $\|u_k-u\|_{\mathcal{W}^{1,p}_X(U_{1})}\to 0$ and $F(u_k)\in C_0^1(U_{1})$. It follows from \eqref{2-23} and \eqref{2-24}  that $\|F(u_{k_{i}})-F(u)\|_{\mathcal{W}^{1,p}_X(U_{1})}\to 0$, which yields $F(u)\in \mathcal{W}_{X,0}^{1,p}(U_{1})$.
\end{proof}
As a result of Proposition \ref{prop2-8}, we have
\begin{proposition}
\label{prop2-9}
Let  $U_{1}$ be an open subset of $U$. For any $u\in \mathcal{W}_X^{1,p}(U_{1})$ and any $c\in \mathbb{R}$, we have
\begin{equation}\label{2-26}
  X_j(u-c)_+=H(u-c)X_ju\quad\mbox{and}\quad X_j(u-c)_-=-H(c-u)X_ju~~~\mbox{in}~~\mathcal{D}'(U_{1}),
\end{equation}
where $H(x)=\chi_{\{x\in \mathbb{R}|x>0\}}(x)$ and $\chi_{E}$ denotes the indicator function of $E$. Furthermore,
\begin{enumerate}[(1)]
  \item if $c\geq 0$, we have $(u-c)_{+}, (u+c)_{-}\in \mathcal{W}_X^{1,p}(U_{1})$;
  \item if $c\geq 0$ and $u\in \mathcal{W}_{X,0}^{1,p}(U_{1})$, then $(u-c)_{+}, (u+c)_{-}\in \mathcal{W}^{1,p}_{X,0}(U_{1})$.
\end{enumerate}
\end{proposition}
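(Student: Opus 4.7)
My approach reduces Proposition~\ref{prop2-9} to the chain rule of Proposition~\ref{prop2-8} via smooth approximation of $t\mapsto t_+$. Fix a family $\{F_\varepsilon\}_{\varepsilon>0}\subset C^\infty(\mathbb{R})$ with $F_\varepsilon(t)=0$ for $t\leq 0$, $F_\varepsilon'$ rising smoothly from $0$ to $1$ on $[0,\varepsilon]$, and $F_\varepsilon'\equiv 1$ on $[\varepsilon,\infty)$; then $F_\varepsilon\in C^\infty(\mathbb{R})$, $0\leq F_\varepsilon'\leq 1$, $F_\varepsilon'(0)=0$, and $F_\varepsilon(t)\to t_+$, $F_\varepsilon'(t)\to H(t)$ pointwise for every $t$ (both limits equal to $0$ at $t=0$). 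The shift $G_\varepsilon(t):=F_\varepsilon(t-c)$ satisfies $G_\varepsilon\in C^1(\mathbb{R})$ and $G_\varepsilon'\in L^\infty(\mathbb{R})$, so Proposition~\ref{prop2-8} applied to $u\in\mathcal{W}_X^{1,p}(U_1)$ with $F=G_\varepsilon$ gives
\begin{equation*}
X_j F_\varepsilon(u-c)=F_\varepsilon'(u-c)X_j u\quad\text{in}~\mathcal{D}'(U_1).
\end{equation*}
Letting $\varepsilon\to 0$, one has $F_\varepsilon(u-c)\to(u-c)_+$ and $F_\varepsilon'(u-c)X_j u\to H(u-c)X_j u$ pointwise everywhere (the equalities $F_\varepsilon'(0)=H(0)=0$ take care of the level set $\{u=c\}$), with dominants $|F_\varepsilon(u-c)|\leq|u-c|$ and $|F_\varepsilon'(u-c)X_j u|\leq|X_j u|$, both in $L^p_{\mathrm{loc}}(U_1)$. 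Dominated convergence in $L^p_{\mathrm{loc}}$, combined with the distributional continuity of $X_j$, yields $X_j(u-c)_+=H(u-c)X_j u$ in $\mathcal{D}'(U_1)$. The identity for $(u-c)_-$ follows by applying the above to $-u$, since $(u-c)_-=(c-u)_+$.

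For item~(1), when $c\geq 0$ the pointwise bounds $(u-c)_+\leq u_+\leq|u|$ and $(u+c)_-\leq u_-\leq|u|$ give $(u-c)_+,(u+c)_-\in L^p(U_1)$. Together with the identities of the previous paragraph, which yield $|X_j(u-c)_+|,|X_j(u+c)_-|\leq|X_j u|\in L^p(U_1)$, this shows $(u-c)_+,(u+c)_-\in\mathcal{W}_X^{1,p}(U_1)$.

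For item~(2), suppose further $u\in\mathcal{W}_{X,0}^{1,p}(U_1)$ and pick $u_k\in C_0^\infty(U_1)$ with $u_k\to u$ in $\mathcal{W}_X^{1,p}(U_1)$. Because $c\geq 0$ forces $F_\varepsilon(-c)=0$, the compact support of $u_k$ gives $w_k^\varepsilon:=F_\varepsilon(u_k-c)\in C_0^\infty(U_1)$, with $X_j w_k^\varepsilon=F_\varepsilon'(u_k-c)X_j u_k$ by Proposition~\ref{prop2-8}. I establish (a) for each fixed $\varepsilon$, $w_k^\varepsilon\to F_\varepsilon(u-c)$ in $\mathcal{W}_X^{1,p}(U_1)$ as $k\to\infty$; and (b) $F_\varepsilon(u-c)\to(u-c)_+$ in $\mathcal{W}_X^{1,p}(U_1)$ as $\varepsilon\to 0$. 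For (a), $L^p$-convergence follows from the $1$-Lipschitz bound $|F_\varepsilon(u_k-c)-F_\varepsilon(u-c)|\leq|u_k-u|$; for the $X_j$-derivative use the splitting
\begin{equation*}
F_\varepsilon'(u_k-c)X_j u_k-F_\varepsilon'(u-c)X_j u=F_\varepsilon'(u_k-c)(X_j u_k-X_j u)+\bigl(F_\varepsilon'(u_k-c)-F_\varepsilon'(u-c)\bigr)X_j u,
\end{equation*}
whose first summand is bounded in $L^p$ by $\|X_j u_k-X_j u\|_{L^p}$ and whose second tends to zero by dominated convergence along a subsequence $u_{k_i}\to u$ a.e., exploiting the \emph{continuity} of $F_\varepsilon'$. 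For (b), the dominated convergence argument of the first paragraph carries over globally because $c\geq 0$ makes $|u|$ and $|X_j u|$ serve as dominants on all of $U_1$. A diagonal extraction from (a) and (b) produces a sequence in $C_0^\infty(U_1)$ converging to $(u-c)_+$ in $\mathcal{W}_X^{1,p}(U_1)$, placing $(u-c)_+$ in $\mathcal{W}_{X,0}^{1,p}(U_1)$; the case $(u+c)_-$ reduces to this via $(u+c)_-=((-u)-c)_+$ together with $-u\in\mathcal{W}_{X,0}^{1,p}(U_1)$. The principal obstacle is step~(a): a naive passage to the limit in $X_j(u_k-c)_+=H(u_k-c)X_j u_k$ would require a subelliptic Stampacchia-type fact that $X_j u=0$ a.e.\ on $\{u=c\}$, which is not readily available for general H\"ormander vector fields; inserting the smooth $F_\varepsilon$-approximants before letting $\varepsilon\to 0$ bypasses this difficulty precisely because $F_\varepsilon'$ is continuous.
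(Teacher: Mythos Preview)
Your proof is correct and follows the paper's approach: approximate $t\mapsto(t-c)_+$ by $C^1$ functions with bounded derivative (the paper uses $F_{c,\varepsilon}(x)=(\sqrt{(x-c)^2+\varepsilon^2}-\varepsilon)\chi_{\{x>c\}}$ rather than your smooth $F_\varepsilon$), apply the chain rule of Proposition~\ref{prop2-8}, and pass to the limit by dominated convergence. For item~(2) you can shorten the argument considerably: since $G_\varepsilon(0)=F_\varepsilon(-c)=0$ when $c\geq 0$, Proposition~\ref{prop2-8}(2) already places $F_\varepsilon(u-c)$ in $\mathcal{W}^{1,p}_{X,0}(U_1)$, so only your step~(b) is needed and the double limit (a)+(b) with diagonal extraction is unnecessary.
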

\begin{proof}
Suppose first that $c\in \mathbb{R}$. For any $\varepsilon>0$, we define
\begin{equation*}
F_{c,\varepsilon}(x)=\left\{
            \begin{array}{ll}
             [(x-c)^2+\varepsilon^2]^{\frac{1}{2}}-\varepsilon, & \hbox{$x>c$;} \\[2mm]
               0, & \hbox{$x\leq c$.}
            \end{array}
          \right.
\end{equation*}
It follows that $F_{c,\varepsilon}\in C^1(\mathbb{R})$, $|F_{c,\varepsilon}'(x)|\leq 1$ and
\begin{equation*}
 F_{c,\varepsilon}'(x)=\left\{
            \begin{array}{ll}
\frac{x-c}{\sqrt{(x-c)^2+\varepsilon^2}}, & \hbox{$x>c$;} \\[2mm]
               0, & \hbox{$x\leq c$.}
            \end{array}
          \right.
\end{equation*}
Moreover, we have $\lim_{\varepsilon\to 0}F_{c,\varepsilon}(x)=(x-c)_{+}$ and $\lim_{\varepsilon\to 0}F'_{c,\varepsilon}(x)=\chi_{\{x|x>c\}}(x)$ for all $x\in \mathbb{R}$. For any $u\in \mathcal{W}_{X}^{1,p}(U_{1})$ and $j=1,\ldots, m$, Proposition \ref{prop2-8} yields that
 \begin{equation}\label{2-27}
  \int_{U_{1}}F_{
c,\varepsilon}(u)X_{j}^{*}\varphi dx=\int_{U_{1}}\varphi F_{c,\varepsilon}'(u)X_{j}u dx\qquad \forall \varphi\in C_{0}^{\infty}(U_{1}).
 \end{equation}
Letting $\varepsilon\to 0^{+}$ in \eqref{2-27}, by dominated convergence theorem we obtain
\[ \int_{U_{1}}(u-c)_{+}X_{j}^{*}\varphi dx=\int_{\{x\in U_{1}| u(x)>c\}}\varphi X_{j}u dx, \qquad\forall \varphi\in C_{0}^{\infty}(U_{1}). \]
Therefore, $X_j(u-c)_+=H(u-c)X_ju$ in $\mathcal{D}'(U_{1})$. Similarly, we can deduce that $X_j(u-c)_-=-H(c-u)X_ju$ in $\mathcal{D}'(U_{1})$ by replacing $F_{c,\varepsilon}(x)$ to $G_{c,\varepsilon}(x):=F_{-c,\varepsilon}(-x)$ in the arguments above. Thus, \eqref{2-26} is achieved.

Assuming that $c\geq 0$, by $(x-c)_{+}\leq |x|$ and \eqref{2-26} we obtain $(u-c)_{+}\in \mathcal{W}_{X}^{1,p}(U_{1})$. If we further suppose that $u\in \mathcal{W}_{X,0}^{1,p}(U_{1})$, Proposition \ref{prop2-8} gives that $F_{c,\varepsilon}(u)\in \mathcal{W}_{X,0}^{1,p}(U_{1})$. Note that
\begin{equation*}
  \int_{U_{1}}|F_{c,\varepsilon}(u)-(u-c)_{+}|^{p}dx\leq \int_{U_{1}}|(u-c)_{+}|^{p}dx\leq \int_{U_{1}}|u|^p dx
\end{equation*}
and
\begin{equation*}
 \int_{U_{1}}|X_{j}F_{c,\varepsilon}(u)-X_j(u-c)_{+}|^{p}dx=\int_{U_{1}}|F'_{c,\varepsilon}(u)-H(u-c)|^{p}|X_ju|^{p}dx\leq 2^{p}\int_{U_{1}}|X_ju|^{p}dx.
\end{equation*}
Using the dominated convergence theorem we obtain $F_{c,\varepsilon}(u)\to (u-c)_{+}$ in $\mathcal{W}_{X,0}^{1,p}(U_{1})$, and therefore $(u-c)_{+}\in \mathcal{W}_{X,0}^{1,p}(U_{1})$.  The proof for the situation $(u+c)_{-}$ is similar and we omit here.
\end{proof}

\section{Representation formulas and weak-$L^p$ estimates of $T$ operators}
\label{section3}

In this section, we provide the key tools for constructing Sobolev inequalities on $\mathcal{W}_{X,0}^{k,p}(\Omega)$, including two different types of representation formulas and weighted weak-$L^p$ estimates.
\subsection{Two types of Representation formulas}

\begin{proposition}[Type I representation formula, see {\cite[Proposition 2.12]{Franchi1995}} and \cite{Lu-Wheeden1998}]
\label{prop3-1}
Suppose $X=(X_{1},X_{2},\ldots,X_{m})$ satisfy the condition (H), and $W\subset \subset U$ is a bounded open subset. Then, there exist positive constants $C>0$, $\alpha_{1}\geq 1$ and $r_0>0$ such that for any $x_0\in \overline{W}$ and $0<r<r_0$, we have
\begin{equation}\label{3-1}
  |f(x)-f_{B}|\leq C\int_{B(x_{0},\alpha_{1}r)}\frac{d(x,y)}{|B(x,d(x,y))|}|Xf(y)|dy
\end{equation}
holds for all $x\in B(x_0,r)$ and any $f\in C^{\infty}(B(x_0,\alpha_{1}r))$. Here $\alpha_{1}$ is a positive constant independent of $f$ and $B(x_0,r)$, and $f_{B}=|B(x_0,r)|^{-1}\int_{B(x_0,r)}f(y)dy$.
\end{proposition}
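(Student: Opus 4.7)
The plan is to derive \eqref{3-1} by combining a pointwise bound obtained through integration of $f$ along subunit curves, an averaging argument, and a Fubini/covering estimate, making essential use of the doubling property of subunit balls established in Proposition \ref{prop2-4}.

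First, for every $x,y\in B(x_0,r)$ and every $\varepsilon>0$, Definition \ref{def2-1} supplies an absolutely continuous curve $\gamma_y:[0,1]\to U$ with $\gamma_y(0)=y$, $\gamma_y(1)=x$, and $\gamma_y'(t)=\sum_{j=1}^m a_j(t)X_j(\gamma_y(t))$ satisfying $\bigl(\sum_j a_j^2(t)\bigr)^{1/2}\leq d(x,y)+\varepsilon$. Reparameterizing on $[t,1]$ shows that $d(x,\gamma_y(t))\leq d(x,y)+\varepsilon\leq 2r+\varepsilon$, so $\gamma_y\subset B(x_0,3r+\varepsilon)$ and the choice $\alpha_1=3$ (or any larger constant) suffices. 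The chain rule together with Cauchy--Schwarz yields
\begin{equation*}
|f(x)-f(y)|\leq (d(x,y)+\varepsilon)\int_0^1|Xf(\gamma_y(t))|\,dt,
\end{equation*}
which, after letting $\varepsilon\to 0$ and averaging the identity $f(x)-f_B=|B|^{-1}\int_B(f(x)-f(y))\,dy$, gives
\begin{equation*}
|f(x)-f_B|\leq \frac{1}{|B|}\int_B d(x,y)\int_0^1|Xf(\gamma_y(t))|\,dt\,dy.
\end{equation*}

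Next, I would decompose $B=B(x_0,r)$ dyadically via $A_k=\{y\in B:2^{-k-1}r<d(x,y)\leq 2^{-k}r\}$ for $k\geq 0$, and on each $A_k$ establish a covering estimate of the form
\begin{equation*}
\int_{A_k}\int_0^1|Xf(\gamma_y(t))|\,dt\,dy\leq C\int_{B(x,2^{1-k}r)}|Xf(z)|\,dz,
\end{equation*}
with $C$ independent of $k$. This is the technical core of the argument and its main obstacle: since general H\"ormander vector fields do not carry a group structure, the flow $(y,t)\mapsto\gamma_y(t)$ admits no explicit change of variables, and the required multiplicity estimate must be extracted by a Whitney-type decomposition of the shadow ball $B(x,2^{1-k}r)$ combined with the ball-box volume estimates of Propositions \ref{prop2-2}--\ref{prop2-4}, following the approach of Jerison, Franchi, and Lu--Wheeden \cite{Franchi1995,Lu-Wheeden1998}.

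Combining the two ingredients with $d(x,y)\sim 2^{-k}r$ on $A_k$ and $|B(x_0,r)|\geq c|B(x,2^{-k}r)|$ (from Proposition \ref{prop2-4}), one obtains
\begin{equation*}
|f(x)-f_B|\leq C\sum_{k=0}^{\infty}\frac{2^{-k}r}{|B(x,2^{-k}r)|}\int_{B(x,2^{1-k}r)}|Xf(z)|\,dz.
\end{equation*}
Swapping summation and integration, for each fixed $z\in B(x,2r)$ the sum collapses to those finitely many indices with $2^{1-k}r>d(x,z)$; by doubling these terms form a geometric sequence whose dominant contribution occurs at the smallest scale $2^{-k}r\sim d(x,z)$, giving a bound of order $d(x,z)|B(x,d(x,z))|^{-1}$. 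This reproduces precisely the kernel in \eqref{3-1}.
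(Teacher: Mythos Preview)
The paper does not supply its own proof of Proposition~\ref{prop3-1}; it is quoted from \cite{Franchi1995,Lu-Wheeden1998}, and the paper explicitly remarks that the original argument in \cite[Proposition~2.12]{Franchi1995} ``consists of an elaborate argument relying directly on the lifting technique introduced by Rothschild--Stein.'' So there is nothing to compare against except that remark, and your outline takes a genuinely different route: a direct curve-integration argument in the base space rather than lifting.

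The genuine gap is the covering estimate
\[
\int_{A_k}\int_0^1|Xf(\gamma_y(t))|\,dt\,dy\leq C\int_{B(x,2^{1-k}r)}|Xf(z)|\,dz.
\]
You correctly identify this as the ``technical core and main obstacle,'' but what you offer in its place is a gesture toward a Whitney decomposition together with a citation of Jerison, Franchi, and Lu--Wheeden. Those references, however, do \emph{not} establish such a multiplicity bound directly: Jerison and Franchi--Lu--Wheeden lift to a free nilpotent group, where a genuine group structure and an explicit change of variables \emph{are} available, and then project back down (this is essentially the same machinery the present paper uses for Proposition~\ref{prop3-2}); Lu--Wheeden \cite{Lu-Wheeden1998} instead deduce the representation formula from the already-known Poincar\'e inequality via a chain-of-balls argument. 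In the base space, the subunit curves $\gamma_y$ are neither unique nor canonically chosen, the map $(y,t)\mapsto\gamma_y(t)$ carries no controlled Jacobian, and a Whitney decomposition of the target ball does not by itself bound how many curve segments hit a given Whitney cube. Without either the lifting step or an appeal to an already-proved Poincar\'e inequality, this estimate is not available, and your argument stalls precisely here. The remaining steps of your outline (the dyadic summation and the collapse to the kernel $d(x,z)/|B(x,d(x,z))|$) are fine once that estimate is granted.
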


The original proof of \eqref{3-1} in \cite[Proposition 2.12]{Franchi1995} consists of an elaborate argument relying directly on the lifting technique introduced by Rothschild-Stein \cite{stein1976}, which also improved the previous results in \cite[pp. 384-388]{Lu1992}. As is well-known,  the type I representation formula \eqref{3-1} plays a crucial role in establishing the Poincar\'{e}-Wirtinger type inequality and subelliptic Sobolev embedding in the space $\mathcal{W}_{X}^{1,p}(\Omega)$ associated with certain special domains (e.g. X-PS domain), as well as the relative isoperimetric inequality (see \cite{Garofalo1996, Lu1992, Franchi-Lu-Wheeden1996}). In Section \ref{section4} below, this representation formula will be utilized in proving the Sobolev inequality in the supercritical case $kp>\tilde{\nu}$.

We must mention that, to construct the Sobolev inequalities on $\mathcal{W}_{X,0}^{k,p}(\Omega)$ in the subcritical case $kp<\tilde{\nu}$ and critical case $kp=\tilde{\nu}$, a different type of representation formula is required, which is entirely distinct from \eqref{3-1}.

\begin{proposition}[Type II representation formula]
\label{prop3-2}
Assume $X=(X_{1},X_{2},\ldots,X_{m})$ satisfy the condition (H), and $W\subset \subset U$ is a bounded open subset. Then, there exists a positive constant $C>0$ such that for any $f\in C_{0}^{\infty}(W)$, we have
\begin{equation}\label{3-2}
  |f(x)|\leq C\int_{W}\frac{d(x,y)}{|B(x,d(x,y))|}(|Xf(y)|+|f(y)|)dy, \qquad \forall x\in \overline{W}.
\end{equation}
\end{proposition}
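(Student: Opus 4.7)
The plan is to prove the representation formula \eqref{3-2} by combining Rothschild--Stein's lifting technique with the saturation method, explicitly avoiding any appeal to a global fundamental solution for $-\triangle_X$ (whose existence the authors note is not actually guaranteed by the classical references). First, for each $x_0 \in \overline{W}$, I would invoke Rothschild--Stein to produce a neighborhood $V \ni x_0$, an open set $\widetilde{V} \subset \mathbb{R}^{n+d}$, and a lifted system $\widetilde{X} = (\widetilde{X}_1,\ldots,\widetilde{X}_m)$ of free H\"ormander vector fields of step $s_0$ on $\widetilde{V}$, such that the canonical projection $\pi:\widetilde{V} \to V$ intertwines $\widetilde{X}_j$ with $X_j$. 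On $\widetilde{V}$ the vector fields are approximated at each point by left-invariant homogeneous vector fields on a free nilpotent group $G$ of suitable step.

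Next, I would construct a local parametrix $\widetilde{P}$ for $-\widetilde{\triangle}_X = \sum_j \widetilde{X}_j^{*}\widetilde{X}_j$ by transferring the explicit fundamental solution on $G$ through the exponential map and localizing with a cutoff. This yields the identity $-\widetilde{\triangle}_X \widetilde{P} = I - \widetilde{R}$ in $\widetilde{V}$, where $\widetilde{P}$ is a type $2$ operator in the Rothschild--Stein sense and $\widetilde{R}$ is a smoother remainder. Consequently, each $\widetilde{P}\widetilde{X}_j^{*}$ (and its formal adjoint) is a type $1$ operator, with kernel $\widetilde{K}_j(\widetilde{x},\widetilde{y})$ satisfying the pointwise bound
\[
|\widetilde{K}_j(\widetilde{x},\widetilde{y})|
\;\leq\; C\,\frac{\widetilde{d}(\widetilde{x},\widetilde{y})}{|\widetilde{B}(\widetilde{x},\widetilde{d}(\widetilde{x},\widetilde{y}))|},
\]
where $\widetilde{d}$ and $\widetilde{B}$ denote the subunit metric and balls attached to $\widetilde{X}$.

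For $f \in C_0^{\infty}(W)$ supported in $V$, I would lift by $\widetilde{f}(\widetilde{x}) = f(\pi\widetilde{x})\,\chi(\widetilde{x})$ for a suitable cutoff $\chi$ in the added variables, apply the parametrix identity, and integrate by parts so that derivatives land on $f$ via $\pi$. Evaluating at $\widetilde{x}=(x,0)$ and then integrating (saturating) against $\widetilde{y}'$ gives
\[
|f(x)| \;\leq\; \sum_{j=1}^{m}\int_{\widetilde{V}}|\widetilde{K}_j(\widetilde{x},\widetilde{y})|\,|X_j f(\pi\widetilde{y})|\,d\widetilde{y}
\;+\;\int_{\widetilde{V}}|\widetilde{R}(\widetilde{x},\widetilde{y})|\,|f(\pi\widetilde{y})|\,d\widetilde{y}\;+\;E(x),
\]
with an error term $E(x)$ produced by the cutoff commutators $[\widetilde{X}_j,\chi]$, which is controlled by $\int |f(\pi\widetilde{y})|\,d\widetilde{y}$ against a type $1$-like kernel. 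The key geometric step then is the saturation estimate for the kernels, namely that integrating $\widetilde{d}(\widetilde{x},\widetilde{y})/|\widetilde{B}(\widetilde{x},\widetilde{d}(\widetilde{x},\widetilde{y}))|$ over the fiber $\pi^{-1}(y)$ yields an expression controlled by $d(x,y)/|B(x,d(x,y))|$; this uses the relation $\widetilde{d}((x,0),(y,y'))\geq d(x,y)$, Fubini, and the comparison between lifted balls and fibered products via the ball-box structure. The global formula on $W$ follows by covering $\overline{W}$ with finitely many such $V_i$ and patching via a partition of unity, the cutoff errors contributing precisely the $|f|$ term on the right-hand side of \eqref{3-2}.

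The main obstacle will be the saturation estimate for the type $1$ kernel together with the rigorous construction of the local parametrix, since one cannot pass to a genuine local fundamental solution by simply integrating out the lifted variables. Controlling the remainder $\widetilde{R}$ and the cutoff-commutator errors in a way that keeps the right-hand side in the desired form $|Xf|+|f|$ (rather than producing higher-order derivatives of $f$) is where the refined kernel estimates must be carefully executed.
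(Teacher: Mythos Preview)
Your proposal is correct and follows essentially the same approach as the paper: Rothschild--Stein lifting to free vector fields, a type $1$ operator representation $au=\sum_j T_j\widetilde{X}_j u+T_0 u$ on the lifted space (which the paper invokes directly from \cite[(15.5)]{stein1976} rather than rederiving via the parametrix identity as you outline), the kernel bound $|K_j(\xi,\eta)|\le C\tilde d(\xi,\eta)^{1-Q_0}$, the fiber-saturation estimate using $\tilde d((x,0),(y,t))\ge d(x,y)$ together with the fiber-measure bound \eqref{3-7} and a dyadic decomposition in $\tilde d$, and finally a partition-of-unity patching that produces the $|f|$ term. The only cosmetic difference is that the paper lifts $f(x)b(s)$ with a cutoff $b$ in the added variables and absorbs the commutator $[\widetilde X_j,b]$ directly into the $|u|$ term, exactly as you anticipate with your error $E(x)$.
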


The type II representation formula \eqref{3-2} without the term $f$ in the right-hand side has been previously presented in  \cite[Proposition 2.4]{Capogna1993}, \cite[p. 210, Remark]{Capogna1994} and \cite[Lemma 5.1]{Lu1992}. As indicated in those references, it was obtained from the identity
\begin{equation}\label{3-3}
  u(x)=\int_{W}X\Gamma(x,y)\cdot Xu(y)dy
\end{equation}
and the associated estimate
\begin{equation}\label{3-4}
  |X\Gamma(x,y)|\leq C\frac{d(x,y)}{|B(x,d(x,y))|}
\end{equation}
mentioned in \cite[p. 114, Corollary]{NSW1985}, where $\Gamma(x,y)$ denotes the global fundamental solution of the H\"{o}rmander operator $-\triangle_{X}=\sum_{j=1}^{m}X_{j}^{*}X_{j}$ in $\mathbb{R}^n$. However, as pointed out by Nagel \cite[Theorem 11]{Nagel1986} and  Biagi-Bonfiglioli-Bramanti \cite[p. 1882]{Biagi2022}, the kernel function $\Gamma(x,y)$ in \eqref{3-3} and \eqref{3-4}, derived by locally saturating the lifted variables for the parametrix $\widetilde{\Gamma}$ associated with the lifting operators $-\widetilde{\triangle_{X}}=\sum_{j=1}^{m}\widetilde{X_{j}^{*}}\widetilde{X_{j}}$, should only be expected to be a local parametrix for $-\triangle_{X}$, but not a genuine global fundamental solution in $\mathbb{R}^n$. Therefore, the identity \eqref{3-3} is invalid, and in our opinion, there exist gaps in the proofs provided in the aforementioned references.

To provide a rigorous proof of Proposition \ref{prop3-2}, we invoke the celebrated lifting-approximating theory by Rothschild-Stein  \cite[Part II]{stein1976}. In precisely, we have

\begin{proposition}[{\cite[Theorem 10.6, Theorem 10.7, Proposition 10.39, Corollary 10.37]{Bramanti2022}}]
\label{prop3-3}
For any $x_{0}\in U$, suppose that the vector fields $X=(X_{1},X_{2},\ldots,X_{m})$ satisfying H\"{o}rmander's condition of step $s(x_0)$ at $x_{0}$. Then we have:
\begin{enumerate}[(1)]
  \item There exists an integer $l$ and vector fields $\widetilde{X_{1}}, \widetilde{X_{2}},\ldots,\widetilde{X_{m}}$ defined in an open neighborhood $W_{0}$ of $(x_{0},0)\in \mathbb{R}^{n+l}$, of the form
\[ \widetilde{X_{j}}=X_{j}+\sum_{i=1}^{l}b_{ji}(x,t_{1},t_{2},\ldots,t_{i-1})\partial_{t_{i}}, \]
where $b_{ji}$ are polynomials such that the vector fields $\widetilde{X_{1}}, \widetilde{X_{2}},\ldots,\widetilde{X_{m}}$ are free up to step $s(x_0)$ and satisfying H\"{o}rmander's condition of step $s(x_0)$ in  $W_{0}$.
\item There exists a structure of stratified homogeneous group $\mathbb{G}$ in $\mathbb{R}^{N}=\mathbb{R}^{n+l}$,  with canonical generators $Y_{1},Y_{2},\ldots,Y_{m}$, and for any $\eta$ in a neighborhood of $(x_0,0)$, there exists a smooth diffeomorphism $\xi\mapsto \Theta_{\eta}(\xi)$ from a neighborhood of $\eta$ onto a neighborhood of the origin in $\mathbb{G}$, smoothly depending on $\eta$, such that for any smooth function $f:\mathbb{G}\to \mathbb{R}$,
\[ \widetilde{X_{j}}(f(\Theta_{\eta}(\cdot)))(\xi)=(Y_{j}f+R_{\eta,j}f)(\Theta_{\eta}(\xi)),\qquad j=1,\ldots,m. \]
Here, the remainder $R_{\eta,j}$, depends smoothly on $\eta$, is a smooth vector field with local degree $\leq 0$.
\item The function $\Theta(\eta,\xi):=\Theta_{\eta}(\xi)$ satisfies $\Theta(\eta,\xi)^{-1}=-\Theta(\xi,\eta)$. Moreover, the change of coordinates from $\mathbb{R}^{N}$ to $\mathbb{G}$ is given by $\xi\to u=\Theta(\eta,\xi)$, which admits a Jacobian determinant such that $d\xi=c(\eta)(1+O(\|u\|_{\mathbb{G}}))du$. Here, $c(\eta)$ is a smooth function, bounded and bounded away from zero, and $\|\cdot\|_{\mathbb{G}}$ is a homogeneous norm on $\mathbb{G}$.

\item Let $\tilde{d}$ be the subunit metric induced by the vector fields $\widetilde{X_{1}}, \widetilde{X_{2}},\ldots,\widetilde{X_{m}}$ on $W_{0}$. For any homogeneous norm $\|\cdot\|_{\mathbb{G}}$ on $\mathbb{G}$, the function $\rho(\xi,\eta)=\|\Theta(\xi,\eta)\|_{\mathbb{G}}$ is a quasidistance, locally equivalent
to the subunit metric $\tilde{d}$.

\item For any points $\xi=(x,s)$ and $\eta=(y,t)$ in $W_{0}$, we have
\begin{equation}\label{3-5}
 \tilde{d}(\xi,\eta)=\tilde{d}((x,s),(y,t))\geq d(x,y).
\end{equation}

\item Let $Q_{0}$ be the homogeneous dimension of $\mathbb{G}$, then for any compact subset $K\subset W_0$, there exist $C>1$ and $\delta_{0}>0$ such that
\begin{equation}\label{3-6}
 C^{-1}r^{Q_{0}}\leq |\tilde{B}(\xi,r)|\leq  Cr^{Q_{0}}
\end{equation}
holds for $\xi\in K$ and $0<r\leq\delta_{0}$. Here, $\tilde{B}(\xi,r)=\{\eta\in W_{0}|\tilde{d}(\xi,\eta)<r\}$ denotes the subunit ball induced by the subunit metric $\tilde{d}$.

\item For any $\xi=(x,s)\in\mathbb{R}^{n+l}$, we denote by $\pi_{1}(\xi):=\pi_{1}(x,s)=x$ the projection from $\mathbb{R}^{n+l}$ to $\mathbb{R}^{n}$. Then,  $\pi_{1}(\tilde{B}(\xi,r))=B(x,r)$.

\item For any compact set $K\subset W_0$, there exist positive constants $C>0$ and $\delta_0>0$ such that
\begin{equation}\label{3-7}
  |\{t\in \mathbb{R}^{l}|(y,t)\in \tilde{B}((x,0),r)\}|\leq C\frac{|\tilde{B}((x,0),r)|}{|B(x,r)|}
\end{equation}
holds for any $(x,0)\in K$, $y\in \pi_{1}(W_0)$ and $0<r\leq \delta_0$. Here, $|E|$ denotes the Lebesgue measure of the set $E$ in $\mathbb{R}^{k}$ for the suitable dimension $k$.
\end{enumerate}
\end{proposition}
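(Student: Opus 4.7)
The plan is to follow the classical lifting--approximation machinery of Rothschild--Stein \cite{stein1976}, as refined and systematically reorganized in Bramanti \cite{Bramanti2022}. The eight assertions form an interlocking package, and I would treat them in order, using each one as input to the next.

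For item (1), I would construct the lifting by the standard inductive procedure: count the linearly independent commutators up to length $s(x_0)$ that a \emph{free} system of $m$ generators of step $s(x_0)$ would possess, compare with the (possibly fewer) independent commutators produced by $X_1,\ldots,X_m$ at $x_0$, and adjoin one new variable $t_i$ for each missing relation. The coefficients $b_{ji}(x,t_1,\ldots,t_{i-1})$ are chosen as polynomials depending only on previously introduced variables, which gives the triangular structure required both to force freeness of $\widetilde{X_1},\ldots,\widetilde{X_m}$ and to make saturation arguments work later. H\"ormander's condition for the lifted system in a neighborhood $W_0$ of $(x_0,0)$ is automatic because the projection already satisfies it.

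For items (2)--(4), I would invoke Rothschild--Stein's approximation theorem. Once $\widetilde{X}=(\widetilde{X_1},\ldots,\widetilde{X_m})$ is free of step $s(x_0)$, there is a canonical Lie algebra isomorphism with the free nilpotent Lie algebra of step $s(x_0)$ on $m$ generators, and the diffeomorphism $\Theta_\eta$ is defined through canonical coordinates of the second kind based at $\eta$. The approximation identity
\[\widetilde{X_j}(f\circ\Theta_\eta)(\xi)=(Y_j f+R_{\eta,j}f)(\Theta_\eta(\xi))\]
follows by expanding the pushforward of $\widetilde{X_j}$ in the graded basis of the Lie algebra of $\mathbb{G}$ and collecting the top-degree part as $Y_j$, with the remainder $R_{\eta,j}$ automatically carrying strictly lower homogeneous degree. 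The symmetry $\Theta(\eta,\xi)^{-1}=-\Theta(\xi,\eta)$ and the Jacobian expansion $d\xi=c(\eta)(1+O(\|u\|_{\mathbb{G}}))du$ both come from Taylor expansion of the defining flows near the identity, with smoothness in $\eta$ inherited from the smooth dependence of exponential coordinates on the base point. Item (4), the quasidistance property of $\rho$ and its local equivalence to $\tilde d$, is then obtained by combining the approximation theorem with the triangle inequality for the homogeneous norm $\|\cdot\|_{\mathbb{G}}$, treating the lower-order remainders $R_{\eta,j}$ by standard perturbation estimates valid on compact sets.

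Items (5)--(8) are more geometric. For (5), any subunit curve $\varphi$ connecting $\xi=(x,s)$ to $\eta=(y,t)$ in the lifted space satisfies $\varphi'(r)=\sum_{j}a_j(r)\widetilde{X_j}(\varphi(r))$; since $\widetilde{X_j}=X_j+(\text{polynomial vertical part})$, the horizontal projection $\pi_1\circ\varphi$ is a subunit curve for $X$ connecting $x$ to $y$ with the same coefficients $a_j$, and taking the infimum gives $\tilde d(\xi,\eta)\ge d(x,y)$. Item (6) follows from the dilation invariance on $\mathbb{G}$: the subunit ball at the origin scales exactly as $r^{Q_0}$, and this is transported to $\xi\in K$ via $\Theta_\xi$ using the bounded Jacobian $c(\xi)$. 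Item (7) uses the lifting direction of (5) together with the surjectivity of $\pi_1$ on subunit paths in the free group. The main obstacle, and the hardest of the eight, is item (8): the fiber estimate is the quantitative content of Rothschild--Stein saturation. The strategy is to write $|\tilde B((x,0),r)|$ as an iterated integral via Fubini in the $(y,t)$ decomposition, apply (6) to control the total volume as $r^{Q_0}$, and match it against the Nagel--Stein--Wainger estimate $|B(x,r)|\asymp\Lambda(x,r)$ to extract the bound on vertical fibers. The technical delicacy is uniformity of constants as $(x,0)$ ranges over $K$, which is handled by compactness of $K$ together with the smooth, non-degenerate dependence of $\Theta_\eta$ on $\eta$ guaranteed by (2).
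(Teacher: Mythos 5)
You should first be aware that the paper itself offers no proof of Proposition \ref{prop3-3}: it is quoted wholesale from Bramanti--Brandolini \cite{Bramanti2022}, which in turn repackages Rothschild--Stein \cite{stein1976} and Nagel--Stein--Wainger \cite{NSW1985}, so your sketch has to be judged against those standard arguments. For items (1)--(7) your outline follows exactly that classical route and is essentially sound: the inductive addition of variables with triangular polynomial coefficients, the approximation theorem in canonical coordinates with remainders of local degree $\leq 0$, the projection inequality \eqref{3-5} by pushing a lifted subunit curve down to a subunit curve of $X$ with the same coefficients, and the equality in (7), whose nontrivial inclusion $B(x,r)\subseteq\pi_{1}(\tilde{B}(\xi,r))$ you should state precisely as a path-lifting fact: given a subunit curve of $X$, the extra coordinates are obtained by integrating the triangular system $t_i'=\sum_j a_j b_{ji}(x(s),t_1,\ldots,t_{i-1})$, which is solvable because each $b_{ji}$ depends only on the previously determined variables. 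Your phrase ``surjectivity of $\pi_1$ on subunit paths in the free group'' is too vague to carry that step, but the idea is the right one.

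The genuine gap is in item (8). Fubini gives $|\tilde{B}((x,0),r)|=\int_{B(x,r)}|\{t:(y,t)\in\tilde{B}((x,0),r)\}|\,dy$, so matching the total volume $r^{Q_0}$ from \eqref{3-6} against $|B(x,r)|\asymp\Lambda(x,r)$ only bounds the \emph{average} fiber by $|\tilde{B}((x,0),r)|/|B(x,r)|$, whereas \eqref{3-7} is a \emph{pointwise} bound for every $y$; no volume bookkeeping alone produces that, and this is precisely the saturation lemma of \cite{NSW1985}. The missing idea is fiber comparability: fix $t_0$ in the fiber over $y$; the triangle inequality for $\tilde{d}$ puts the whole fiber inside $\{t:(y,t)\in\tilde{B}((y,t_0),2r)\}$; then for each $y'\in B(y,2r)$ one lifts a subunit path of $X$ from $y$ to $y'$ starting at each $(y,t)$, and because the vertical part of $\widetilde{X_j}$ is divergence-free in $t$ (again the triangular structure, $b_{ji}$ independent of $t_i$) this flow preserves fiber measure, so Fubini yields $|\tilde{B}((y,t_0),4r)|\geq|B(y,2r)|\cdot|\{t:(y,t)\in\tilde{B}((y,t_0),2r)\}|$. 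Combining this with $\tilde{B}((y,t_0),4r)\subseteq\tilde{B}((x,0),5r)$, the comparability $|\tilde{B}((x,0),5r)|\leq C|\tilde{B}((x,0),r)|$ from \eqref{3-6}, and $B(x,r)\subseteq B(y,2r)$ from \eqref{3-5} gives \eqref{3-7}. Without this path-lifting and measure-preservation step your strategy stalls at the average bound; the uniformity over $K$ that you single out as the main delicacy is by comparison the routine part.
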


Thanks to Proposition \ref{prop3-3} and the estimates of type $\lambda$ operators, we obtain

\begin{lemma}
\label{lemma3-1}
For any $x_{0}\in U$, let $X=(X_{1},X_{2},\ldots,X_{m})$ be vector fields satisfying H\"{o}rmander's condition of step $s(x_0)$ at $x_{0}$, and let $W_{0}$ be an open neighborhood of $(x_0,0)\in \mathbb{R}^{n+l}$ as given by Proposition \ref{prop3-3}. Then, for any bounded open subset $W_{1}\subset \subset W_{0}$ and any function $a\in C_{0}^{\infty}(W_{1})$, there exists a positive constant $C>0$ such that for any $u\in C_{0}^{\infty}(W_1)$, we have
\begin{equation}
  |a(\xi)u(\xi)|\leq C\int_{W_{1}}(|\widetilde{X}u(\eta)|+|u(\eta)|)\frac{d\eta}{\tilde{d}(\xi,\eta)^{Q_{0}-1}}\qquad \forall \xi\in W_{1},
\end{equation}
where $\widetilde{X}=(\widetilde{X_{1}},\widetilde{X_{2}},\ldots,\widetilde{X_{m}})$ denotes the lifting vector fields of $X=(X_{1},X_{2},\ldots,X_{m})$,  $Q_{0}$ is the homogeneous dimension of $\mathbb{G}$, and
$\tilde{d}$ is the subunit metric induced by $\widetilde{X}$ in $W_0$.
\end{lemma}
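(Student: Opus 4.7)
The plan is to derive the pointwise bound by constructing a local parametrix for the lifted sub-Laplacian $\widetilde{L}:=-\widetilde{\triangle_X}=\sum_{j=1}^{m}\widetilde{X_j}^*\widetilde{X_j}$ on $W_0$, via the Rothschild--Stein approximation theorem, and then applying the resulting integral representation to the cutoff product $au$.

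Let $\Gamma$ denote the fundamental solution of the canonical sub-Laplacian $-\sum_{j=1}^{m}Y_j^2$ on the stratified group $\mathbb{G}$ from Proposition \ref{prop3-3}. By Folland's classical theory, $\Gamma$ is $\delta_t$-homogeneous of degree $2-Q_0$, and its horizontal derivatives $Y_j\Gamma$ are homogeneous of degree $1-Q_0$. Using the map $\Theta$, I define a candidate local parametrix with kernel
\[
P(\xi,\eta):=\chi(\xi,\eta)\,\Gamma(\Theta(\xi,\eta)),
\]
where $\chi\in C^\infty$ is a symmetric cutoff equal to $1$ on a neighborhood of the diagonal of $\overline{W_1}\times\overline{W_1}$ and vanishing outside the region in $W_0\times W_0$ where $\Theta$ is well defined. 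By Proposition \ref{prop3-3}(4), $\|\Theta(\xi,\eta)\|_{\mathbb{G}}\sim \tilde{d}(\xi,\eta)$, so $P$ is a kernel of type $2$, i.e.\ $|P(\xi,\eta)|\leq C\tilde{d}(\xi,\eta)^{2-Q_0}$. Applying the chain rule $\widetilde{X_j}(f\circ\Theta_\eta)=(Y_jf+R_{\eta,j}f)\circ\Theta_\eta$ of Proposition \ref{prop3-3}(2), the leading contribution to $\widetilde{L}_\xi P(\xi,\eta)$ reproduces the identity (since $-\sum_j Y_j^2\Gamma=\delta_0$), while the contributions from the remainders $R_{\eta,j}$ (of local degree $\leq 0$), from derivatives of $\chi$, and from the divergence of $\widetilde{X_j}$, all produce kernels of type at least $1$, bounded by $C\tilde{d}(\xi,\eta)^{1-Q_0}$. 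Thanks to the symmetric choice of $\chi$, this yields both the right and the left parametrix identity $P\widetilde{L}=I+S$ on $C_0^\infty(W_1)$, with $S$ an integral operator whose kernel is bounded by $C\tilde{d}(\xi,\eta)^{1-Q_0}$.

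Applying this identity to $au\in C_0^\infty(W_1)$ gives
\[
a(\xi)u(\xi)=\sum_{j=1}^{m}(P\widetilde{X_j}^*)\bigl(\widetilde{X_j}(au)\bigr)(\xi)-S(au)(\xi).
\]
Since $P$ is of type $2$, its composition with the first-order operator $\widetilde{X_j}^*$ lowers the type by one, so $P\widetilde{X_j}^*$ is represented by an integral kernel bounded by $C\tilde{d}(\xi,\eta)^{1-Q_0}$. Expanding $\widetilde{X_j}(au)=a\,\widetilde{X_j}u+(\widetilde{X_j}a)u$ and using the uniform bounds on $a$ and $\widetilde{X_j}a$ on $\overline{W_1}$, I combine all contributions into
\[
|a(\xi)u(\xi)|\leq C\int_{W_1}\bigl(|\widetilde{X}u(\eta)|+|u(\eta)|\bigr)\frac{d\eta}{\tilde{d}(\xi,\eta)^{Q_0-1}},
\]
which is precisely the claimed inequality.

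The principal technical obstacle is the rigorous verification of the parametrix identity $P\widetilde{L}=I+S$ with $S$ of type $\geq 1$. This requires the full calculus of type-$\lambda$ operators on the lifted space: one must track carefully how the remainder vector fields $R_{\eta,j}$ of Proposition \ref{prop3-3}(2) affect the order of the resulting kernels under compositions such as $\widetilde{X_j}^*\widetilde{X_j}$ acting on $P(\cdot,\eta)$, and must check that cutoff contributions are supported away from the diagonal, hence produce bounded (and thus absorbable, by boundedness of $W_1$) kernels. The fact that each $R_{\eta,j}$ has local degree $\leq 0$ is exactly what ensures that every error kernel gains the one order required to be of type $\geq 1$ rather than type $0$. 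Once this bookkeeping is completed, the passage from the parametrix identity to the desired pointwise bound reduces to routine size estimates together with the Leibniz expansion of $\widetilde{X_j}(au)$.
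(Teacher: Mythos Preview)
Your argument is correct and follows essentially the same route as the paper. The paper simply invokes the Rothschild--Stein representation formula \cite[(15.5)]{stein1976} (equivalently \cite[Theorems 11.19, 11.25]{Bramanti2022}), which asserts that $au=\sum_{j=1}^m T_j\widetilde{X_j}u+T_0u$ with $T_0,\ldots,T_m$ operators of type~$1$, and then reads off the pointwise bound from the kernel estimate $|K_j(\xi,\eta)|\le C\rho(\xi,\eta)^{1-Q_0}$ together with $\rho\sim\tilde d$; your parametrix construction $P\widetilde L=I+S$ applied to $au$ and the subsequent Leibniz expansion is precisely how Rothschild--Stein derive that formula in the first place, so you are reproving the cited black box rather than taking a different path.
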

\begin{proof}
Using \cite[(15.5), p. 298]{stein1976} (also see \cite[Theorem 11.19, Theorem 11.25]{Bramanti2022}), we deduce that for any  $a\in C_{0}^{\infty}(W_{1})$, there exists linear operators $T_{0},T_{1},\ldots,T_{m}$ of type $1$ such that
\begin{equation}\label{3-9}
au=\sum_{j=1}^{m}T_{j}\widetilde{X_{j}}u+T_{0}u\qquad \forall u\in C_{0}^{\infty}(W_1).
\end{equation}
Here, the operators $T_{0},T_{1},\ldots,T_{m}$ are defined in terms of some type $1$ kernels $K_{0}, K_{1},\ldots,K_{m}$ as follows:
\begin{equation}\label{3-10}
  (T_{j}f)(\xi):=\int_{W_{1}}K_{j}(\xi,\eta)f(\eta)d\eta\qquad \forall f\in C_{0}^{\infty}(W_{1}),~~0\leq j\leq m.
\end{equation}
 For precise definitions of operators and kernels of type $\lambda$, one can refer to \cite[Defintion 11.11, Defintion 11.7]{Bramanti2022} and \cite[(13.3), (13,4), pp. 288-289]{stein1976}. By means of \cite[(13.4), p. 289]{stein1976} and \cite[Theorem 11.10]{Bramanti2022}, we obtain the following growth estimate for kernels of type $1$:
\begin{equation}\label{3-11}
|K_{j}(\xi,\eta)|\leq C(\rho(\xi,\eta))^{1-Q_{0}},
\end{equation}
where $\rho(\xi,\eta)=\|\Theta(\xi,\eta)\|_{\mathbb{G}}$ is the quasidistance and $Q_{0}$ is the homogeneous dimension of $\mathbb{G}$ given by Proposition \ref{prop3-3} (see \cite[(13.3), (13.6)]{stein1976}). Then, it follows from Proposition \ref{prop3-3} (3) that $d\eta=c(\xi)(1+O(\|u\|_{\mathbb{G}}))du$, where $c(\xi)$ is a smooth function, bounded and bounded away from zero. Therefore, for any small $\varepsilon>0$ and any fixed $\xi\in W_{1}$, \eqref{3-11} derives that
\[ \int_{\rho(\xi,\eta)<\varepsilon}|K_{j}(\xi,\eta)|d\eta\leq C\int_{\|u\|_{\mathbb{G}}<\varepsilon}\|u\|_{\mathbb{G}}^{1-Q_{0}}du<\infty, \]
which  confirms the well-definedness of  $T_{0},T_{1},\ldots,T_{m}$. Since $\rho$ is equivalent to the subunit metric $\tilde{d}$ in $W_{1}$,  \eqref{3-9}-\eqref{3-11} imply that
\begin{equation*}
\begin{aligned}
|a(\xi)u(\xi)|&\leq C\int_{W_{1}}\left(\sum_{j=1}^{m}|\widetilde{X_{j}}u(\eta)|+|u(\eta)|\right)\tilde{d}(\xi,\eta)^{1-Q_{0}}d\eta\qquad \forall \xi\in W_{1}.
\end{aligned}
\end{equation*}
\end{proof}

Owing to Proposition \ref{prop3-3} and Lemma \ref{lemma3-1}, we can derive the local version of Proposition \ref{prop3-2}.
\begin{lemma}
\label{lemma3-2}
Let $X=(X_{1},X_{2},\ldots,X_{m})$ satisfy  condition (H). For any fixed point $x_0\in U$, there exists a open neighborhood $U(x_0)$ of $x_0$ such that for any $u\in C_{0}^{\infty}(U(x_0))$, we have
\begin{equation}\label{3-12}
  |u(x)|\leq C\int_{U(x_0)}\frac{d(x,y)}{|B(x,d(x,y))|}(|Xu(y)|+|u(y)|)dy, \qquad \forall x\in U(x_0).
\end{equation}

\end{lemma}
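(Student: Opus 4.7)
The plan is to reduce \eqref{3-12} to Lemma~\ref{lemma3-1} via the Rothschild--Stein lifting and then project back to $\mathbb{R}^n$ by integrating out the lifted variables. First I would invoke Proposition~\ref{prop3-3} at $x_0$ to obtain lifted vector fields $\widetilde{X}=(\widetilde{X_1},\ldots,\widetilde{X_m})$ on a neighborhood $W_0$ of $(x_0,0)\in\mathbb{R}^{n+l}$, with induced subunit metric $\tilde{d}$ and associated stratified group of homogeneous dimension $Q_0$. I would then fix a sufficiently small open neighborhood $U(x_0)\subset\pi_{1}(W_0)$ of $x_0$, a cutoff $\varphi\in C_0^\infty(\mathbb{R}^l)$ with $\varphi(0)=1$, and a bump $a\in C_0^\infty(W_0)$ with $a\equiv 1$ on $\overline{U(x_0)}\times\{0\}$, all arranged so that $W_1:=U(x_0)\times\mathrm{supp}\,\varphi\subset\subset W_0$.

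For $u\in C_0^\infty(U(x_0))$, set $\tilde{u}(x,t):=u(x)\varphi(t)\in C_0^\infty(W_1)$. Because $\widetilde{X_j}=X_j+\sum_{i=1}^l b_{ji}(x,t_1,\ldots,t_{i-1})\partial_{t_i}$ with the polynomials $b_{ji}$ bounded on $W_1$, the product rule yields
\[
|\widetilde{X_j}\tilde{u}(y,t)|\leq |X_ju(y)|\,|\varphi(t)|+M\,|u(y)|\,|\nabla_t\varphi(t)|.
\]
Applying Lemma~\ref{lemma3-1} at $\xi=(x,0)$ with $x\in U(x_0)$ (so that $a(\xi)=1$) would then give
\[
|u(x)|\leq C\int_{W_1}\bigl(|Xu(y)|+|u(y)|\bigr)\,\psi(t)\,\tilde{d}((x,0),(y,t))^{1-Q_0}\,dy\,dt
\]
for some nonnegative $\psi\in C_0^\infty(\mathbb{R}^l)$ built from $\varphi$ and $\nabla_t\varphi$. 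By Fubini, \eqref{3-12} then reduces to the kernel estimate
\[
\int\psi(t)\,\tilde{d}((x,0),(y,t))^{1-Q_0}\,dt\leq C\,\frac{d(x,y)}{|B(x,d(x,y))|}.
\]

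To prove the kernel estimate, I would apply the layer cake formula together with the lower bound $\tilde{d}((x,0),(y,t))\geq d(x,y)$ from Proposition~\ref{prop3-3}(5) and the upper bound $\tilde{d}\leq R$ on $W_1$:
\[
\int\psi(t)\,\tilde{d}((x,0),(y,t))^{1-Q_0}\,dt\leq C(Q_0-1)\int_{d(x,y)}^{R}r^{-Q_0}\bigl|\{t:(y,t)\in\tilde{B}((x,0),r)\}\bigr|\,dr.
\]
The slice estimate \eqref{3-7} together with $|\tilde{B}((x,0),r)|\leq Cr^{Q_0}$ from \eqref{3-6} turns this into $C\int_{d(x,y)}^{R}dr/|B(x,r)|$. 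Finally, writing $|B(x,r)|\sim\Lambda(x,r)=\sum_I|\lambda_I(x)|r^{d(I)}$ via Propositions~\ref{prop2-2} and~\ref{prop2-3}, the term-by-term inequality $\Lambda(x,r)/\Lambda(x,a)\geq(r/a)^{\nu(x)}$ for $r\geq a$ immediately produces
\[
\int_a^\infty\frac{dr}{\Lambda(x,r)}\leq\frac{a^{\nu(x)}}{\Lambda(x,a)}\int_a^\infty r^{-\nu(x)}\,dr=\frac{a}{(\nu(x)-1)\Lambda(x,a)},
\]
whose constant is uniform because $\nu(x)\geq n\geq 2$.

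The main technical hurdle will be the careful bookkeeping of the layer cake in the lifted space and the identification of the $t$-slice measure of the lifted subunit ball, for which Proposition~\ref{prop3-3}(7) is tailor-made. Once the problem is reduced to estimating $\int_{d(x,y)}^{R}dr/|B(x,r)|$, the final polynomial-growth inequality above is elementary and uniform, and assembling the pieces delivers \eqref{3-12}.
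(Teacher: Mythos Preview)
Your approach is essentially the paper's: lift via Proposition~\ref{prop3-3}, apply Lemma~\ref{lemma3-1} to a tensor product $u(x)\varphi(t)$, and then reduce \eqref{3-12} to the slice estimate $\int \tilde d((x,0),(y,t))^{1-Q_0}\,dt\lesssim d(x,y)/|B(x,d(x,y))|$ using \eqref{3-5}--\eqref{3-7}. The only cosmetic differences are that the paper carries out a dyadic decomposition in $\tilde d$ (annuli $2^{k}d(x,y)\leq\tilde d<2^{k+1}d(x,y)$) instead of your continuous layer cake, and closes the resulting geometric sum with the cruder exponent $n$ in place of your $\nu(x)$; both variants are equivalent and your sketch is correct.
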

\begin{proof}
According to Proposition \ref{prop3-3}, for any fixed point $x_0\in U$, there exist lifting vector fields $\widetilde{X_{1}}, \widetilde{X_{2}},\ldots,\widetilde{X_{m}}$ defined in an open neighborhood $W_{0}$ of $(x_{0},0)\in \mathbb{R}^{n+l}$ for some $l\in \mathbb{N}$. Let $\tilde{d}$ be the subunit metric induced by $\widetilde{X_{1}}, \widetilde{X_{2}},\ldots,\widetilde{X_{m}}$ in $W_{0}$, and denote by $\tilde{B}((x_0,0),r)=\{(y,t)\in W_0|\tilde{d}((x_0,0),(y,t))<r\}$ the subunit ball in $W_0$ associated with $\tilde{d}$.
We first choose a bounded open neighborhood $W_{3}$ of $(x_0,0)$ such that  $(x_0,0)\in W_{3}\subset\subset W_{0}$ and  $\pi_{1}(\overline{W_{3}})$ is a compact subset of $U$, where $\pi_{1}(\xi)=\pi_{1}(x,s)=x$ denotes the projection from $\mathbb{R}^{n+l}$ to $\mathbb{R}^{n}$. From properties (6) and (8) in Proposition \ref{prop3-3}, there exists a positive constant $\delta_{0}$ associated with the  compact set $\overline{W_{3}}$ such that \eqref{3-6} and \eqref{3-7} hold for $K=\overline{W_{3}}$. Besides, by Proposition \ref{prop2-3}, there exist positive constants $C>1$ and $\rho_{\pi_{1}(\overline{W_{3}})}>0$ such that
\begin{equation}\label{3-13}
C^{-1}\Lambda(x,r)\leq |B(x,r)|\leq C\Lambda(x,r)\qquad \forall x\in \pi_{1}(\overline{W_{3}}),~0<r\leq \rho_{\pi_{1}(\overline{W_{3}})}.
\end{equation}
Then, we choose some positive constants $0<\delta_{2}<\delta_{1}\leq \min\{\frac{1}{2}\delta_{0},\frac{1}{3}\rho_{\pi_{1}(\overline{W_{3}})}\} $ and $T>0$ such that
\begin{equation}\label{3-14}
W_{2}:=B(x_{0},\delta_{2})\times (-T,T)^{l}\subset\subset W_{1}:=\tilde{B}((x_0,0),\delta_{1})\subset \tilde{B}((x_0,0),3\delta_{1})\subset W_{3}\subset\subset W_{0}.
\end{equation}
Let $U(x_0):=B(x_{0},\delta_{2})$ be the open neighborhood of $x_0$. We next show that \eqref{3-12} holds on $U(x_0)$.

Set the cut-off functions $a\in C_{0}^{\infty}(W_{1})$ with $a(\xi)=1$ on $\overline{W_{2}}$, and $b\in C_{0}^{{\infty}}((-T,T)^{l})$ with $b(s)=1$ on $(-\frac{1}{2}T,\frac{1}{2}T)^{l}$. For any $u\in C_{0}^{\infty}(U(x_0))$, by \eqref{3-14} we have $u(x)b(s)\in C_{0}^{{\infty}}(W_{1})$. Additionally, $W_{1}\subset \tilde{B}((x,0),2\delta_{1})$ for all $\xi=(x,0)\in W_{2}$. Note that $\widetilde{X_{j}}u=X_{j}u$. Owing to Lemma \ref{lemma3-1}, for any $x\in U(x_0)$ such that $\xi=(x,0)\in W_{2}$, we obtain
\begin{equation}
\begin{aligned}\label{3-15}
|u(x)|&=|a(x,0)u(x)b(0)|\leq C\int_{W_{1}}\left(\sum_{j=1}^{m}|\widetilde{X_{j}}(ub)(\eta)|+|(ub)(\eta)|\right)\tilde{d}(\xi,\eta)^{1-Q_{0}}d\eta\\
&=C\int_{\pi_{1}(W_{1})}\left(|Xu(y)|+|u(y)|\right)\left(\int_{\{t\in \mathbb{R}^{l}|(y,t)\in W_{1}\} }\frac{dt}{\tilde{d}((x,0),(y,t))^{Q_{0}-1}}\right)dy\\
& \leq C\int_{\pi_{1}(W_{1})}\left(|Xu(y)|+|u(y)|\right)\left(\int_{\{t\in \mathbb{R}^{l}|(y,t)\in \tilde{B}((x,0),2\delta_{1})\} }\frac{dt}{\tilde{d}((x,0),(y,t))^{Q_{0}-1}}\right)dy\\
&=C\int_{B(x_0,\delta_1)}\left(|Xu(y)|+|u(y)|\right)\left(\int_{\{t\in \mathbb{R}^{l}|(y,t)\in \tilde{B}((x,0),2\delta_{1})\}}\frac{dt}{\tilde{d}((x,0),(y,t))^{Q_{0}-1}}\right)dy,
\end{aligned}
\end{equation}
where $Q_{0}$ is the homogeneous dimension of $\mathbb{G}$.

Let us examine the integral
\begin{equation}\label{3-16}
I(x,y):=\int_{\{t\in \mathbb{R}^{l}|(y,t)\in \tilde{B}((x,0),2\delta_{1})\}}\frac{dt}{\tilde{d}((x,0),(y,t))^{Q_{0}-1}}
\end{equation}
for fixed  $(x,0)\in W_{2}$ and $y\in B(x_0,\delta_1)\subset \pi_{1}(W_{0})$. It follows from \eqref{3-14} that $ \tilde{B}((x,0),2\delta_{1})\subset \tilde{B}((x_{0},0),3\delta_{1})\subset\subset W_{0}$. Thus,  Proposition \ref{prop3-3} (5) indicates that for any $t\in \mathbb{R}^{l}$ satisfying $(y,t)\in \tilde{B}((x,0),2\delta_{1})$, we have
\begin{equation}\label{3-17}
d(x,y)\leq \tilde{d}((x,0),(y,t))<2\delta_{1}.
\end{equation}
According to \eqref{3-17}, the estimates of \eqref{3-16} can be divided into the following two cases:\\
\noindent \emph{\textbf{Case 1}:} $\delta_{1}\leq d(x,y)$. Using \eqref{3-6}, \eqref{3-7} and \eqref{3-17}, we have
\begin{equation}
\begin{aligned}\label{3-18}
I(x,y)&=\int_{\{t\in \mathbb{R}^{l}|(y,t)\in \tilde{B}((x,0),2\delta_{1})\}}\frac{dt}{\tilde{d}((x,0),(y,t))^{Q_{0}-1}}\\
&\leq \frac{1}{d(x,y)^{Q_{0}-1}}|\{t\in \mathbb{R}^{l}|(y,t)\in \tilde{B}((x,0),2\delta_{1})\}|\\
&\leq \frac{C}{d(x,y)^{Q_{0}-1}}\cdot \frac{|\tilde{B}((x,0),2\delta_{1})|}{|B(x,2\delta_{1})|}\\
&\leq \frac{C}{d(x,y)^{Q_{0}-1}}\cdot \frac{(2\delta_{1})^{Q_{0}}}{|B(x,2\delta_{1})|}\leq C\frac{d(x,y)}{|B(x,d(x,y))|},
\end{aligned}
\end{equation}
where $C>0$ is a positive constant independent of $x$ and $y$.

\noindent \emph{\textbf{Case 2}:} $\delta_{1}> d(x,y)$. In this case, we can choose an integer $k_0\geq 1$ (depends on $x,y$ and $\delta_1$), such that $2^{k_{0}}d(x,y)< 2\delta_{1}\leq 2^{k_{0}+1}d(x,y)$. Then we have
\begin{equation}
\begin{aligned}
I(x,y)&=\int_{\{t\in \mathbb{R}^{l}|(y,t)\in \tilde{B}((x,0),2\delta_{1})\}}\frac{dt}{\tilde{d}((x,0),(y,t))^{Q_{0}-1}}\\
&=\sum_{k=0}^{k_0-1}\int_{\{t\in \mathbb{R}^{l}|(y,t)\in \tilde{B}((x,0),2^{k+1}d(x,y))\setminus \tilde{B}((x,0),2^{k}d(x,y)) \}}\frac{dt}{\tilde{d}((x,0),(y,t))^{Q_{0}-1}}\\
&+\int_{\{t\in \mathbb{R}^{l}|(y,t)\in \tilde{B}((x,0),2\delta_{1})\setminus \tilde{B}((x,0),2^{k_{0}}d(x,y)) \}}\frac{dt}{\tilde{d}((x,0),(y,t))^{Q_{0}-1}}:=I_{1}(x,y)+I_{2}(x,y).
\end{aligned}
\end{equation}
By \eqref{3-6}, \eqref{3-7} and \eqref{3-13}, we obtain
\begin{equation}
\begin{aligned}\label{3-20}
I_{1}(x,y)&=\sum_{k=0}^{k_0-1}\int_{\{t\in \mathbb{R}^{l}|(y,t)\in \tilde{B}((x,0),2^{k+1}d(x,y))\setminus \tilde{B}((x,0),2^{k}d(x,y)) \}}\frac{dt}{\tilde{d}((x,0),(y,t))^{Q_{0}-1}}\\
&\leq \sum_{k=0}^{k_0-1}\frac{|\{t\in \mathbb{R}^{l}|(y,t)\in \tilde{B}((x,0),2^{k+1}d(x,y)) \}|}{(2^{k}d(x,y))^{Q_{0}-1}}\\
&\leq C\sum_{k=0}^{k_0-1} \frac{|\tilde{B}((x,0),2^{k+1}d(x,y))|}{(2^{k}d(x,y))^{Q_{0}-1}|B(x,2^{k+1}d(x,y))|}\\
&\leq C\sum_{k=0}^{k_0-1} \frac{2^{Q_{0}+k}d(x,y)}{|B(x,2^{k+1}d(x,y))|}\leq C\sum_{k=0}^{k_0-1} \frac{2^{Q_{0}+k}d(x,y)}{\sum_{I}|\lambda_{I}(x)|(2^{k+1}d(x,y))^{d(I)}}\\
&\leq  C\sum_{k=0}^{k_0-1} \frac{2^{Q_{0}+k}d(x,y)}{2^{(k+1)n}\sum_{I}|\lambda_{I}(x)|(d(x,y))^{d(I)}}\leq \frac{C2^{Q_{0}-n}d(x,y)}{(1-2^{1-n})|B(x,d(x,y))|},
\end{aligned}
\end{equation}
where $C>0$ is a positive constant independent of $x,y$ and $k_0$. Similarly, using $2^{k_{0}}d(x,y)\leq 2\delta_{1}\leq 2^{k_{0}+1}d(x,y)$, we deduce from \eqref{3-6}, \eqref{3-7} and \eqref{3-13} that
\begin{equation}
\begin{aligned}\label{3-21}
I_{2}(x,y)&=\int_{\{t\in \mathbb{R}^{l}|(y,t)\in \tilde{B}((x,0),2\delta_{1})\setminus \tilde{B}((x,0),2^{k_{0}}d(x,y)) \}}\frac{dt}{\tilde{d}((x,0),(y,t))^{Q_{0}-1}}\\
&\leq \frac{|\{t\in \mathbb{R}^{l}|(y,t)\in \tilde{B}((x,0),2\delta_{1}) \}|}{(2^{k_0}d(x,y))^{Q_{0}-1}}\leq \frac{1}{(2^{k_0}d(x,y))^{Q_{0}-1}}\frac{|\tilde{B}((x,0),2\delta_{1})|}{|B(x,2\delta_{1})|}\\
&\leq \frac{C}{(2^{k_0}d(x,y))^{Q_{0}-1}}\frac{(2\delta_{1})^{Q_{0}}}{|B(x,2^{k_{0}}d(x,y))|}\leq \frac{C}{(2^{k_0}d(x,y))^{Q_{0}-1}}\frac{(2^{k_{0}+1}d(x,y))^{Q_{0}}}{\sum_{I}|\lambda_{I}(x)|(2^{k_{0}}d(x,y))^{d(I)}}\\
&\leq \frac{C}{(2^{k_0}d(x,y))^{Q_{0}-1}}\frac{(2^{k_{0}+1}d(x,y))^{Q_{0}}}{2^{k_{0}n}\sum_{I}|\lambda_{I}(x)|d(x,y)^{d(I)}}\leq\frac{C2^{Q_0}d(x,y)}{|B(x,d(x,y))|},
\end{aligned}
\end{equation}
where $C>0$ is a positive constant independent of $x,y$ and $k_0$.

Hence, we conclude from \eqref{3-15}-\eqref{3-21} that
\[   |u(x)|\leq C\int_{U(x_0)}\frac{d(x,y)}{|B(x,d(x,y))|}(|Xu(y)|+|u(y)|)dy \qquad \forall x\in U(x_0), \]
where $U(x_0)=B(x_0,\delta_{2})\subset B(x_0,\delta_1)$, and $C>0$ is a positive constant independent of $x$.
\end{proof}

Now, we can complete the proof of Proposition \ref{prop3-2}.

\begin{proof}[Proof of Proposition \ref{prop3-2}]
Since $\overline{W}$ is a compact subset of $U$, Lemma \ref{lemma3-2} allows us to choose a finite open cover $\{U(x_{i})\}_{i=1}^{l_{0}}$ of $\overline{W}$ and corresponding positive constants $\{C(x_{i})\}_{i=1}^{l_{0}}$ such that for any $u\in C_{0}^{\infty}(U(x_{i}))$,
\begin{equation}\label{3-22}
  |u(x)|\leq C(x_i)\int_{U(x_i)}\frac{d(x,y)}{|B(x,d(x,y))|}(|Xu(y)|+|u(y)|)dy, \qquad \forall x\in U(x_i).
\end{equation}
By partition of unity, there exists a collection of cut-off functions $\{\psi_{i}\}_{i=1}^{l_{0}}$ such that $\psi_{i}\in C_{0}^{\infty}(U(x_{i}))$, $0\leq\psi_{i}\leq 1$, and $\sum_{i=1}^{l_{0}}\psi_{i}(x)=1$ for all $x\in \overline{W}$. As a result of \eqref{3-22}, for any $f\in C_{0}^{\infty}(W)$,
\begin{equation*}
\begin{aligned}
|f(x)|&\leq \sum_{i=1}^{l_{0}}|\psi_{i}(x)f(x)|\leq \sum_{i=1}^{l_{0}}C(x_i)\int_{U(x_i)}\frac{d(x,y)}{|B(x,d(x,y))|}(|X(\psi_{i}f)|+|\psi_{i}f|)dy\\
&\leq C\int_{W}\frac{d(x,y)}{|B(x,d(x,y))|}(|Xf(y)|+|f(y)|)dy\qquad \forall x\in \overline{W},
\end{aligned}
\end{equation*}
where $C>0$ is a positive independent of $x$.
\end{proof}
\begin{remark}
In general, the $f$ term on the right-hand side of \eqref{3-2} cannot be eliminated due to  the locality of Lemma \ref{lemma3-2} and the partition of unity arguments.
\end{remark}

\subsection{Weighted weak-$L^p$ estimates of $T$ operators}

Let $W\subset\subset U$ be a bounded open subset, and $\Lambda(x,r)$ denotes the Nagel-Stein-Wainger polynomial mentioned in \eqref{2-3} above. Next, we are devoted to the weighted weak-$L^p$ estimates of the following linear operators:
\begin{equation}\label{T}
  Tf(x):=\int_{W}\frac{d(x,y)}{\Lambda(x,d(x,y))}f(y)dy.
\end{equation}

For this purpose, we employ an abstract lemma from harmonic analysis. Consider a measure space $(\mathcal{X},\mathcal{A},\mu)$, where $\mathcal{X}$ is a set, $\mathcal{A}$ is a $\sigma$-algebra on $\mathcal{X}$, and $\mu$ is a positive measure defined on $(\mathcal{X},\mathcal{A})$. Then, we have

\begin{lemma}
\label{lemma3-3}
Let $K(x,y)$ be a measurable function on $\mathcal{X}\times \mathcal{X}$ such that for some $r>1$,
$K(\cdot,y)$ is weak-$L^{r}$ uniformly in $y$ and
\begin{equation*}
 \sup_{y\in \mathcal{X}}\|K(\cdot,y)\|_{L^{r}_{w}(\mathcal{X},d\mu)}<\infty.
\end{equation*}
Then the operator
\begin{equation*}
  Tf=\int_{\mathcal{X}}K(\cdot,y)f(y)d\mu(y)
\end{equation*}
is bounded from $L^{1}(\mathcal{X},d\mu)$ to  $L^{r}_{w}(\mathcal{X},d\mu)$. Moreover,
\begin{equation*}
\|Tf\|_{L^{r}_{w}(\mathcal{X},d\mu)}\leq \frac{2^{2-\frac{1}{r}}}{(r-1)^{\frac{1}{r}}}\|f\|_{L^1(\mathcal{X},d\mu)}\sup_{y\in \mathcal{X}}\|K(\cdot,y)\|_{L^{r}_{w}(\mathcal{X},d\mu)}.
\end{equation*}
Here, $L^{r}_{w}(\mathcal{X},d\mu)$ is the weak-$L^{r}$ space on $\mathcal{X}$ with respect to the measure $\mu$, and $\|f\|_{L^{r}_{w}(\mathcal{X},d\mu)}$ denotes the weak-$L^{r}$ norm in $L^{r}_{w}(\mathcal{X},d\mu)$.
\end{lemma}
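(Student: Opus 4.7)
The plan is to prove Lemma \ref{lemma3-3} by the classical truncation-decomposition trick from interpolation theory, balancing an $L^\infty$ bound against an $L^1$ bound on two pieces of the kernel. Set $M := \sup_{y\in\mathcal{X}} \|K(\cdot,y)\|_{L^r_w(\mathcal{X},d\mu)}$ and, by homogeneity of the inequality, reduce to the case $\|f\|_{L^1(\mathcal{X},d\mu)}=1$ and $f\geq 0$. For each $\lambda>0$, choose a threshold $c=c(\lambda)>0$ (to be optimized, and which will turn out to be proportional to $\lambda$) and split
\[
K(x,y) = K_1(x,y) + K_2(x,y), \qquad K_1 := K\,\chi_{\{|K(\cdot,y)|\leq c\}}, \quad K_2 := K\,\chi_{\{|K(\cdot,y)|> c\}},
\]
inducing $Tf = T_1 f + T_2 f$. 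The tail bound
\[
\mu(\{|Tf|>\lambda\}) \leq \mu(\{|T_1 f|>\lambda/2\}) + \mu(\{|T_2 f|>\lambda/2\})
\]
is the starting point: the first summand will be killed by choosing $c$ to force $T_1 f$ to be pointwise small, and the second will be handled by Chebyshev plus an $L^1$ estimate of $K_2$.

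For $T_1 f$: since $|K_1(x,y)|\leq c$ pointwise, $|T_1 f(x)| \leq c\,\|f\|_{L^1}=c$ for every $x$. Choosing $c=\lambda/2$ makes $\mu(\{|T_1 f|>\lambda/2\})=0$. For $T_2 f$: by Fubini,
\[
\|T_2 f\|_{L^1(d\mu)} \leq \int_{\mathcal{X}} \|K_2(\cdot,y)\|_{L^1(d\mu)}\, f(y)\, d\mu(y),
\]
and the integral $\|K_2(\cdot,y)\|_{L^1}$ is evaluated by the layer-cake formula, noting that $\{|K_2(\cdot,y)|>t\}$ equals $\{|K(\cdot,y)|>c\}$ for $t\leq c$ and $\{|K(\cdot,y)|>t\}$ for $t>c$. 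Using the uniform weak-$L^r$ bound $\mu(\{|K(\cdot,y)|>t\})\leq M^r/t^r$, we get
\[
\|K_2(\cdot,y)\|_{L^1} \leq \int_0^c \frac{M^r}{c^r}\,dt + \int_c^\infty \frac{M^r}{t^r}\,dt = \frac{r}{r-1}\,M^r c^{1-r},
\]
uniformly in $y$. Hence $\|T_2 f\|_{L^1}\leq \tfrac{r}{r-1}M^r c^{1-r}$ and Chebyshev yields
\[
\mu(\{|T_2 f|>\lambda/2\}) \leq \frac{2}{\lambda}\cdot \frac{r\,M^r\,c^{1-r}}{r-1}.
\]
Substituting $c=\lambda/2$, this becomes $\frac{2^r\,r\,M^r}{(r-1)\lambda^r}$. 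Taking the $r$-th root, multiplying by $\lambda$, and taking the supremum over $\lambda>0$ gives $\|Tf\|_{L^r_w}\leq \big(\tfrac{2^r r}{r-1}\big)^{1/r} M$, which is bounded by the stated constant $\frac{2^{2-1/r}}{(r-1)^{1/r}}M$ (after reinstating $\|f\|_{L^1}$).

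There is no deep obstacle here: the argument is the Marcinkiewicz-type estimate adapted to a kernel-uniform hypothesis rather than a single weak-$L^r$ function. The only minor technical point to be careful about is that $c$ must be chosen to balance the $L^\infty$ contribution of $T_1 f$ against the tail distribution of $T_2 f$; the natural choice $c=\lambda/(2\|f\|_{L^1})$ makes the $T_1$ piece vanish from the tail and leads directly to the advertised constant. The measurability of $K_1,K_2$ and the Fubini step are routine under the standing hypothesis that $K$ is measurable on $\mathcal{X}\times\mathcal{X}$.
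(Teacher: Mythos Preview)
Your argument is essentially the paper's: truncate the kernel at a threshold, eliminate the bounded piece via an $L^\infty$ bound, and control the large piece in $L^1$ by layer-cake plus Chebyshev (the paper just swaps the labels $K_1\leftrightarrow K_2$ and parametrizes the level as $2s$ with cutoff $s/2$). The only slip is your closing claim that $\big(\tfrac{2^r r}{r-1}\big)^{1/r}=\tfrac{2\,r^{1/r}}{(r-1)^{1/r}}$ is bounded by the stated constant $\tfrac{2^{2-1/r}}{(r-1)^{1/r}}$: this is equivalent to $r\le 2^{r-1}$, which fails for $1<r<2$ (e.g.\ at $r=3/2$ one has $1.5>\sqrt{2}$). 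In fact your layer-cake estimate $\|K_2(\cdot,y)\|_{L^1}\le \tfrac{r}{r-1}M^r c^{1-r}$ is the correct one; the paper's proof writes $\int_{\mathcal{X}}|K_1(x,y)|\,d\mu(x)=\int_{s/2}^\infty \beta_{K(\cdot,y)}(t)\,dt$, thereby omitting the term $\tfrac{s}{2}\,\mu(\{|K(\cdot,y)|\ge s/2\})$, and it is precisely this omission that yields the smaller constant in the lemma. So you have correctly established the $L^1\to L^r_w$ boundedness with the explicit constant $\tfrac{2\,r^{1/r}}{(r-1)^{1/r}}$, but not with the exact constant asserted; the discrepancy traces to an oversight in the paper's own computation rather than to any defect in your method.
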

Lemma \ref{lemma3-3} is a refinement of \cite[Lemma 15.3]{Folland-Stein1974}. We present the proof as follows:
\begin{proof}[Proof of Lemma \ref{lemma3-3}]
Without loss of generality, we can suppose that $\|f\|_{L^1(\mathcal{X},d\mu)}=1$. For any $s>0$, we let $K(x,y)=K_{1}(x,y)+K_{2}(x,y)$, where
\[ K_{1}(x,y)=\left\{
                \begin{array}{ll}
                 K(x,y) , & \hbox{$| K(x,y)|\geq\frac{s}{2}$;} \\
                  0, & \hbox{$| K(x,y)|<\frac{s}{2}$,}
                \end{array}
              \right.\quad \mbox{and}\quad K_{2}(x,y)=\left\{
                \begin{array}{ll}
               0 , & \hbox{$| K(x,y)|\geq\frac{s}{2}$;} \\
                   K(x,y), & \hbox{$| K(x,y)|<\frac{s}{2}$.}
                \end{array}
              \right.\]
We then define $T_{1}f:=\int_{\mathcal{X}}K_{1}(\cdot,y)f(y)d\mu(y)$, $T_{2}f:=\int_{\mathcal{X}}K_{2}(\cdot,y)f(y)d\mu(y)$, and adopt the notation
\[
\beta_{Tf}(s):=\mu(\{x\in \mathcal{X}||Tf(x)|>s\}) \quad \forall s>0.\]
It follows that
$\beta_{Tf}(2s)\leq \beta_{T_{1}f}(s)+\beta_{T_{2}f}(s)$.
Since
\[ |T_{2}f(x)|\leq \int_{\mathcal{X}}|K_{2}(x,y)||f(y)|d\mu(y) \leq \frac{s}{2},\]
we have $\{x\in \mathcal{X}| |T_{2}f(x)|>s\}=\varnothing$ and $\beta_{T_{2}f}(s)=0$ for any $s>0$.

Next, for any fixed $y\in \mathcal{X}$, we have
\[ \begin{aligned}
\int_{\mathcal{X}}|K_{1}(x,y)|d\mu(x)&=\int_{\frac{s}{2}}^{+\infty}\beta_{K(\cdot,y)}(t)dt\leq \int_{\frac{s}{2}}^{+\infty}\frac{\|K(\cdot,y)\|_{L^{r}_{w}(\mathcal{X},d\mu)}^{r}}{t^{r}}dt\\
&=\frac{1}{r-1}\left(\frac{s}{2}\right)^{1-r}\|K(\cdot,y)\|_{L^{r}_{w}(\mathcal{X},d\mu)}^{r}.
\end{aligned}\]
Hence,
\[ \begin{aligned}
\|T_{1}f\|_{L^{1}(\mathcal{X},d\mu)}&=\int_{\mathcal{X}}|T_{1}f(x)|d\mu(x)\leq \int_{\mathcal{X}}|f(y)|\left(\int_{\mathcal{X}}|K_{1}(x,y)|d\mu(x)\right)d\mu(y)\\
&\leq \left[\sup_{y\in \mathcal{X}}\int_{\mathcal{X}}|K_{1}(x,y)|d\mu(x)\right]\cdot\int_{\mathcal{X}}|f(y)|d\mu(y)\\
&\leq \frac{1}{r-1}\left(\frac{s}{2}\right)^{1-r}\sup_{y\in \mathcal{X}}\|K(\cdot,y)\|_{L^{r}_{w}(\mathcal{X},d\mu)}^{r}.
\end{aligned} \]
As a result, we have for any $s>0$,
\[ \beta_{Tf}(2s)\leq \beta_{T_{1}f}(s)\leq \frac{\|T_{1}f\|_{L^{1}(\mathcal{X},d\mu)}}{s}\leq
\frac{1}{(r-1)2^{1-r}}s^{-r}\left(\sup_{y\in \mathcal{X}}\|K(\cdot,y)\|_{L^{r}_{w}(\mathcal{X},d\mu)}\right)^{r}, \]
which yields that
\[ \|Tf\|_{L^{r}_{w}(\mathcal{X},d\mu)}=\sup_{s>0}\bigg((2s)[\beta_{Tf}(2s)]^{\frac{1}{r}}\bigg) \leq \frac{2^{2-\frac{1}{r}}}{(r-1)^{\frac{1}{r}}}\sup_{y\in \mathcal{X}}\|K(\cdot,y)\|_{L^{r}_{w}(\mathcal{X},d\mu)}.\]
\end{proof}

According to Lemma \ref{lemma3-3}, we have
\begin{proposition}
\label{prop3-4}
Let $X=(X_{1},X_{2},\ldots,X_{m})$  satisfy condition (H). Suppose that $W\subset \subset U$ is a bounded open subset. Then, for every $n$-tuple $I=(i_{1},i_{2},\ldots,i_{n})$ with $1\leq i_{j}\leq l$,
the linear operator
\begin{equation*}
  Tf=\int_{W}\frac{d(\cdot,y)}{\Lambda(\cdot,d(x,y))}f(y)dy
\end{equation*}
is bounded from $L^1(W)$ to $L^{\frac{d(I)}{d(I)-1}}_w(W,|\lambda_I|^{\frac{1}{d(I)-1}}dx)$, that is,
  \begin{equation}\label{3-24}
    \int_{\{x\in W||Tf(x)|>t\}}|\lambda_I(x)|^{\frac{1}{d(I)-1}}dx\leq C t^{-\frac{d(I)}{d(I)-1}}\|f\|_{L^1(W)}^{\frac{d(I)}{d(I)-1}} \qquad \forall f\in L^{1}(W),~t>0,
  \end{equation}
where  $C>0$ is a positive constant.
\end{proposition}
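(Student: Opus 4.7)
The plan is to deduce Proposition \ref{prop3-4} from Lemma \ref{lemma3-3} applied to the kernel $K(x,y) = d(x,y)/\Lambda(x, d(x,y))$ on $\mathcal{X} = W$, with the weighted measure $d\nu = |\lambda_I(x)|^{1/(d(I)-1)} dx$ on the range side and Lebesgue measure on the source. The statement of Lemma \ref{lemma3-3} uses a single measure, but its proof goes through verbatim with $dy$ on the source and $d\nu$ on the range: one only needs to rewrite the Fubini step in the $T_1$ estimate and the pointwise bound for $T_2f$ with the appropriate measure on each side. With $r = d(I)/(d(I)-1) > 1$, it then suffices to verify
\[
\sup_{y \in W}\, \sup_{t > 0}\; t^{r} \int_{\{x \in W:\, K(x,y) > t\}} |\lambda_I(x)|^{1/(d(I)-1)} \, dx \;<\; \infty.
\]
For $t$ bounded, the finite weighted mass of $W$ (a continuous density on a compact set) gives this trivially, so the content of the proposition lies in the large-$t$ regime.

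The key pointwise ingredient is the elementary inequality $\Lambda(x,r) \geq |\lambda_I(x)|\, r^{d(I)}$, obtained directly by isolating the $I$-th summand in the definition \eqref{2-3} of $\Lambda$. Combined with the defining relation $\Lambda(x, d(x,y)) < d(x,y)/t$ of $\{K(\cdot,y) > t\}$, this yields $|\lambda_I(x)|\, d(x,y)^{d(I)-1} < 1/t$, equivalently
\[
|\lambda_I(x)|^{1/(d(I)-1)} \;\leq\; \frac{1}{t^{1/(d(I)-1)}\, d(x,y)} \qquad \text{on } \{K(\cdot,y) > t\}.
\]
Integrating this pointwise bound reduces the weak-type estimate to the uniform inequality
\[
\int_{\{K(\cdot,y) > t\}} \frac{dx}{d(x,y)} \;\leq\; \frac{C}{t}, \qquad (\star)
\]
since then $\nu(\{K > t\}) \leq t^{-1/(d(I)-1)} \cdot C/t = C\, t^{-d(I)/(d(I)-1)}$ as required.

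To prove $(\star)$ I would decompose $\{K(\cdot,y) > t\}$ into dyadic annuli $A_k = \{K > t\} \cap \{2^{-k-1} R_t \leq d(x,y) < 2^{-k} R_t\}$ for $k \geq 0$, where $R_t \leq (Ct)^{-1/(\tilde{\nu}-1)}$ is a uniform upper bound on $d(x,y)$ on $\{K > t\}$ coming from $\Lambda(x,r) \geq C r^{\tilde{\nu}}$ (Proposition \ref{prop2-2}). On $A_k$ we have $1/d(x,y) \leq 2^{k+1}/R_t$. Two complementary bounds on $|A_k|$ are then available: the constraint bound $|A_k| \leq |B(y, 2^{-k} R_t)| \leq C\, 2^{-k} R_t/t$, obtained by chaining $|B(x, d(x,y))| \approx \Lambda(x, d(x,y)) < d(x,y)/t$ through the doubling inequality \eqref{2-14} and the ball comparison \eqref{2-15} of Proposition \ref{prop2-4}; and the geometric bound $|A_k| \leq |B(y, 2^{-k} R_t)| \leq C\, \Lambda(y, 2^{-k} R_t)$ from Proposition \ref{prop2-3}, where for small $r$ the polynomial $\Lambda(y,r)$ is dominated by its lowest-degree term $c(y)\, r^{\nu(y)}$ with $c(y) = \sum_{d(J)=\nu(y)} |\lambda_J(y)|$. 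Using the constraint bound on the outer shells and the geometric bound on the inner shells produces two geometric series each summing to $O(1/t)$, which yields $(\star)$.

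The main obstacle will be making the constant in $(\star)$ genuinely uniform in $y$. Different $y \in \overline{W}$ carry different pointwise homogeneous dimensions $\nu(y)$ and different leading coefficients $c(y)$, and the cross-over between the ``constraint-dominated'' and ``geometry-dominated'' dyadic shells shifts with $y$. The saving feature is that the combination $c(y)\, R_t^{\nu(y)-1}$ that appears after summation is $O(1/t)$ regardless of $y$: when $R_t$ is set by the local leading order it equals $(c(y) t)^{-1/(\nu(y)-1)}$ and the two factors cancel, while when $R_t$ is forced down to the uniform scale $(Ct)^{-1/(\tilde{\nu}-1)}$ the contribution of shells near $R_t$ is already suppressed. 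The continuity of the $\lambda_J$ on the compact set $\overline W$ together with the uniform lower bound of Proposition \ref{prop2-2} then packages everything into a single constant $C > 0$ depending only on $W$ and $X$, completing the verification of the weak-$L^{d(I)/(d(I)-1)}$ hypothesis of Lemma \ref{lemma3-3}.
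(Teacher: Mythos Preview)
Your reduction to $(\star)$ via the pointwise inequality $|\lambda_I(x)|^{1/(d(I)-1)} \leq t^{-1/(d(I)-1)}/d(x,y)$ on $\{K(\cdot,y)>t\}$ is correct and is a genuinely different organization from the paper's argument. There is, however, a gap in your dyadic proof of $(\star)$. The constraint bound $|A_k|\leq C\,2^{-k}R_t/t$ contributes $C/t$ \emph{per shell} to $\int dx/d(x,y)$---constant in $k$, not geometric---so your claim of ``two geometric series'' fails: summing the constraint bound over any range of shells produces a logarithmic factor, and when $\nu(y)<\tilde\nu$ the gap between $R_t=(Ct)^{-1/(\tilde\nu-1)}$ and the true scale $s_t(y)\sim t^{-1/(\nu(y)-1)}$ grows without bound in $t$. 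The correct observation is that the constraint bound (combined with ball-box and doubling) actually forces the outer shells to be \emph{empty}: if $A_k\neq\emptyset$, pick $x\in A_k$ and chain $\Lambda(x,d(x,y))<d(x,y)/t$ through Propositions~\ref{prop2-3}--\ref{prop2-4} to get $\Lambda(y,2^{-k}R_t)\leq C\,2^{-k}R_t/t$, i.e.\ $2^{-k}R_t\leq s_t(y)$ where $s_t(y)/\Lambda(y,s_t(y))=t/C$. The geometric bound then runs only over shells inside $B(y,s_t(y))$ and sums, as a \emph{single} geometric series with ratio $2^{-(n-1)}$, to $C\,\Lambda(y,s_t(y))/s_t(y)=O(1/t)$ uniformly in $y$.

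The paper bypasses $(\star)$ entirely. It first proves the two-sided comparability $\Lambda(x,d(x,y))\approx\Lambda(y,d(x,y))$ on $\overline W\times\overline W$ (equation~\eqref{3-27}), which immediately yields $\{K(\cdot,y)>t\}\subset B(y,s_t)$ for the $s_t$ above. It then bounds $|\lambda_I(x)|\leq s_t^{-d(I)}\Lambda(x,s_t)\leq C\,s_t^{-d(I)}|B(y,s_t)|$ \emph{uniformly} on that ball and integrates in one line to obtain $\int_{\{K>t\}}|\lambda_I|^{1/(d(I)-1)}dx\leq C\bigl(|B(y,s_t)|/s_t\bigr)^{d(I)/(d(I)-1)}\leq C\,t^{-d(I)/(d(I)-1)}$. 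Your pointwise bound is sharper near the edge of the super-level set but forces the auxiliary integral $(\star)$; the paper's cruder uniform bound on $|\lambda_I|$ trades that for a one-step estimate. Note also that the paper applies Lemma~\ref{lemma3-3} with the single weighted measure $d\mu_I$ on both sides and then controls $\|f\|_{L^1(d\mu_I)}\leq \max_{\overline W}|\lambda_I|^{1/(d(I)-1)}\|f\|_{L^1(W)}$, so your two-measure variant of the lemma is not needed.
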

\begin{proof}
 Let $r:=\frac{d(I)}{d(I)-1}>1$ and $K(x,y):=\frac{d(x,y)}{\Lambda(x,d(x,y))}$ be a measurable function on $W\times W$. We first show that there exists a positive constant $C>0$ such that for all $y\in W$ and $t>0$,
\begin{equation}\label{3-25}
  \int_{A_t(y)}|\lambda_I(x)|^{\frac{r}{d(I)}}dx\leq Ct^{-r},
\end{equation}
where  $A_t(y):=\left\{x\in W\Big| K(x,y) >t\right\}$. Clearly, \eqref{3-25} is equivalent to
\begin{equation}\label{3-26}
  \sup_{y\in W}\|K(\cdot,y)\|_{L_{w}^{r}(W,d\mu_{I})}<\infty
\end{equation}
with $d\mu_I=|\lambda_I(x)|^{\frac{1}{d(I)-1}}dx$.

Applying  Proposition \ref{prop2-3} and Proposition \ref{prop2-4} for the compact subset $\overline{W}$, there exist $\rho_{\overline{W}}>0$ and $C^{*}\geq 1$ such that for any $x,y\in \overline{W}$ with $d(x,y)\leq \frac{\rho_{\overline{W}}}{2}$, we have
\begin{equation}\label{3-27}
 \frac{1}{C^{*}}\Lambda(y,d(x,y))\leq \Lambda(x,d(x,y))\leq C^{*}\Lambda(y,d(x,y)).
\end{equation}
If $\delta_{\overline{W}}:=\sup_{x,y\in \overline{W}}d(x,y)\leq \frac{\rho_{\overline{W}}}{2}$, the restriction $d(x,y)\leq \frac{\rho_{\overline{W}}}{2}$ can be removed, and \eqref{3-27} holds for all $x,y\in \overline{W}$. Now, let us consider the case where $\delta_{\overline{W}}>\frac{\rho_{\overline{W}}}{2}$ and $\frac{\rho_{\overline{W}}}{2}\leq d(x,y)\leq \delta_{\overline{W}}$. Due to Proposition \ref{prop2-2}, we have
\[ \Lambda(x,d(x,y))\leq \max_{x\in \overline{W}}\Lambda(x,\delta_{\overline{W}})\leq C\min_{x\in \overline{W}}\Lambda\left(y,\frac{\rho_{\overline{W}}}{2}\right)\leq C\Lambda(x,d(x,y)).\]
This implies \eqref{3-27} holds for all $x,y\in \overline{W}$.

It follows \eqref{3-27} from that for any $y\in W$ and $t>0$,
\begin{equation}\label{3-28}
  A_t(y)=\left\{x\in W\bigg| K(x,y) >t\right\}\subset\left\{x\in W\bigg|\frac{d(x,y)}{\Lambda(y,d(x,y))}> \frac{t}{C^{*}}\right\}:=B_{t}(y).
\end{equation}
Then, we have\\
\noindent \emph{\textbf{Case 1}:} $0<t\leq\frac{C^{*}\frac{\rho_{\overline{W}}}{2}}{\Lambda(y,\frac{\rho_{\overline{W}}}{2})}$. By Proposition \ref{prop2-3}, we obtain for any $y\in W$,
\begin{equation}
\begin{aligned}
  t^{r}\int_{A_{t}(y)}|\lambda_{I}(x)|^{\frac{r}{d(I)}}dx&\leq \left(\frac{C^{*}\frac{\rho_{\overline{W}}}{2}}{\Lambda(y,\frac{\rho_{\overline{W}}}{2})}\right)^{r}|W|\cdot \max_{x\in\overline{W}}|\lambda_{I}(x)|^{\frac{r}{d(I)}}\\
  &\leq  \frac{C(C^{*})^{\frac{d(I)}{d(I)-1}}|W|}{\rho_{\overline{W}}^{r(\tilde{\nu}(\overline{W})-1)}}\cdot \max_{x\in\overline{W}}|\lambda_{I}(x)|^{\frac{1}{d(I)-1}},
\end{aligned}
\end{equation}
which yields \eqref{3-25}.

\noindent \emph{\textbf{Case 2}:} $t>\frac{C^{*}\frac{\rho_{\overline{W}}}{2}}{\Lambda(y,\frac{\rho_{\overline{W}}}{2})}$. For any $y\in W$, since $g(s):= \frac{s}{\Lambda(y,s)}$ is a strict decreasing function on $\mathbb{R}^{+}$ with $\lim_{s\to+\infty}g(s)=0$, there exists a unique $0<s_{t}<\frac{\rho_{\overline{W}}}{2}$ such that
\begin{equation}\label{3-30}
  \frac{s_{t}}{\Lambda(y,s_{t})}=\frac{t}{C^{*}}>\frac{\frac{\rho_{\overline{W}}}{2}}{\Lambda(y,\frac{\rho_{\overline{W}}}{2})},
\end{equation}
and
\begin{equation}\label{3-31}
B_{t}(y)=\left\{x\in W\bigg|\frac{d(x,y)}{\Lambda(y,d(x,y))}> \frac{t}{C^{*}}\right\}\subset \{x\in W|d(x,y)<s_{t}\}\subset B(y,s_{t}).
\end{equation}
According to \eqref{2-11} and \eqref{3-30},
\begin{equation}\label{3-32}
\frac{|B(y,s_{t})|}{s_{t}}\leq \frac{C_{2}\Lambda(y,s_{t})}{s_{t}}=\frac{C_{2}C^{*}}{t},
\end{equation}
where $C_{2}>0$ is the positive constant in \eqref{2-11}. Furthermore, for any $x\in B(y,s_{t})$, we have $B(x,s_{t})\subset B(y,2s_{t})$. Therefore, Proposition \ref{prop2-3}, Proposition \ref{prop2-4}, \eqref{3-28} and \eqref{3-31} imply that
\begin{equation}\label{3-33}
\begin{aligned}
|\lambda_{I}(x)|&\leq s_{t}^{-d(I)}\Lambda(x,s_{t})\leq C_{1}^{-1}s_{t}^{-d(I)}|B(x,s_{t})|\leq C_{1}^{-1}s_{t}^{-d(I)}|B(y,2s_{t})|\\
&\leq C_{1}^{-1}C_{3}s_{t}^{-d(I)}|B(y,s_{t})|
\end{aligned}
\end{equation}
holds for any $y\in W$, $x\in A_{t}(y)$ and $t>\frac{C^{*}\frac{\rho_{\overline{W}}}{2}}{\Lambda(y,\frac{\rho_{\overline{W}}}{2})}$, where $C_{1} $and  $C_{3}$ are the positive constants in \eqref{2-10} and \eqref{2-14}, respectively. Using \eqref{3-28}, \eqref{3-32}, \eqref{3-33} and Proposition \ref{prop2-4}, we conclude that for any $y\in W$,
\begin{equation*}
\begin{aligned}
\int_{A_t(y)}|\lambda_I(x)|^{\frac{r}{d(I)}}dx&\leq (C_{1}^{-1}C_{3})^{\frac{1}{d(I)-1}}\left( \frac{|B(y,s_{t})|}{s_{t}}\right)^{r}\\
&\leq \frac{(C_{1}^{-1}C_{3})^{\frac{1}{d(I)-1}}(C_{2}C^{*})^{\frac{d(I)}{d(I)-1}}}{t^{r}}\qquad \forall t>\frac{C^{*}\frac{\rho_{\overline{W}}}{2}}{\Lambda(y,\frac{\rho_{\overline{W}}}{2})},
\end{aligned}
\end{equation*}
which also yields \eqref{3-25}.

Now, employing Lemma \ref{lemma3-3} for $d\mu_I=|\lambda_I(x)|^{\frac{1}{d(I)-1}}dx$ and $\mathcal{X}=W$, it follows that
\[ \|Tf\|_{L^{r}_{w}(W,d\mu_I)}\leq C\|f\|_{L^1(W,d\mu_I)}\leq C \max_{x\in\overline{W}}|\lambda_{I}(x)|^{\frac{r}{d(I)}}\int_{W}|f(x)|dx \]
for all $f\in L^{1}(W)$, which proves \eqref{3-24}.
\end{proof}

\section{Proofs of main results}
\label{section4}

\subsection{Proof of Theorem \ref{thm1}}

By means of  Proposition \ref{prop3-2}, Proposition \ref{prop3-4} and Lemma \ref{lemma3-3}, we have

\begin{lemma}
\label{lemma4-1}
Let $X=(X_{1},X_{2},\ldots,X_{m})$  satisfy assumption (H). Suppose that $\Omega\subset \subset U$ is a bounded open subset. Then, there exists a positive constant $C>0$ such that for every $n$-tuple $I=(i_{1},i_{2},\ldots,i_{n})$ with $1\leq i_{j}\leq l$,
\begin{equation}\label{4-1}
\int_{\{x\in \Omega||u(x)|>t\}}|\lambda_I(x)|^{\frac{1}{d(I)-1}}dx\leq Ct^{-\frac{d(I)}{d(I)-1}}\|Xu\|_{L^1(\Omega)}^{\frac{d(I)}{d(I)-1}}\qquad \forall u\in \mathcal{W}^{1,1}_{X,0}(\Omega),~~ t>0.
\end{equation}
\end{lemma}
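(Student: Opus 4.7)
The plan is to bound $|u(x)|$ pointwise by the operator $T$ applied to $|Xu|+|u|$ via the Type II representation formula (Proposition \ref{prop3-2}), then apply the weighted weak-$L^p$ estimate for $T$ (Proposition \ref{prop3-4}), and finally absorb the $|u|$ contribution using the degenerate Friedrichs-Poincaré inequality (Proposition \ref{prop2-6}). A standard density argument then extends the result from $C_0^\infty(\Omega)$ to $\mathcal{W}_{X,0}^{1,1}(\Omega)$.

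More concretely, I would first fix a bounded open set $W$ with $\Omega\subset\subset W\subset\subset U$, so that any $u\in C_0^\infty(\Omega)$, extended by zero, belongs to $C_0^\infty(W)$. Proposition \ref{prop3-2} applied on $W$ yields
\[
|u(x)|\leq C\int_{W}\frac{d(x,y)}{|B(x,d(x,y))|}\bigl(|Xu(y)|+|u(y)|\bigr)\,dy,\qquad x\in\overline{W}.
\]
Since $d(x,y)\leq \delta_{\overline{W}}$ for $x,y\in W$, the ball-box estimate \eqref{2-10} gives $|B(x,d(x,y))|\geq C_1\Lambda(x,d(x,y))$, and therefore $|u(x)|\leq C\cdot T(|Xu|+|u|)(x)$ with $T$ as in \eqref{T} (where integration over $W$ reduces to integration over $\Omega$ because $u$ is supported in $\Omega$).

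Next, I would apply Proposition \ref{prop3-4}: the operator $T$ is bounded from $L^1(W)$ into $L_w^{d(I)/(d(I)-1)}(W,|\lambda_I|^{1/(d(I)-1)}\,dx)$. Since $\{x\in\Omega:|u(x)|>t\}\subset\{x\in W:C\cdot T(|Xu|+|u|)(x)>t\}$, the weak-type bound gives
\[
\int_{\{x\in\Omega:\,|u(x)|>t\}}|\lambda_I(x)|^{\frac{1}{d(I)-1}}dx\leq Ct^{-\frac{d(I)}{d(I)-1}}\bigl(\|Xu\|_{L^1(\Omega)}+\|u\|_{L^1(\Omega)}\bigr)^{\frac{d(I)}{d(I)-1}}.
\]
Invoking Proposition \ref{prop2-6} with $p=1$, we have $\|u\|_{L^1(\Omega)}\leq C\|Xu\|_{L^1(\Omega)}$ for any $u\in\mathcal{W}_{X,0}^{1,1}(\Omega)$, which immediately yields \eqref{4-1} for $u\in C_0^\infty(\Omega)$.

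To pass to general $u\in\mathcal{W}_{X,0}^{1,1}(\Omega)$, I would approximate $u$ by a sequence $\{u_k\}\subset C_0^\infty(\Omega)$ converging in $\mathcal{W}_{X}^{1,1}(\Omega)$, extract an a.e.\ pointwise convergent subsequence, and apply Fatou's lemma on the level sets $\{|u|>t\}\subset\liminf_k\{|u_k|>t-\varepsilon\}$ followed by letting $\varepsilon\to 0^+$ at any continuity point of the distribution function (the set of discontinuities being countable is harmless since both sides are right-continuous in $t$). The main technical obstacle is really the interplay between $|B(x,d(x,y))|$ appearing in the representation formula and $\Lambda(x,d(x,y))$ needed for Proposition \ref{prop3-4}, which is already cleanly resolved by the volume estimates of Proposition \ref{prop2-3}; the rest of the argument is a routine combination of the three preparatory results.
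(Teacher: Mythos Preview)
Your proposal is correct and follows essentially the same route as the paper: representation formula (Proposition~\ref{prop3-2}) plus the ball-box comparison \eqref{2-10} to reduce to $T$, then the weighted weak-type bound (Proposition~\ref{prop3-4}), absorption of $\|u\|_{L^1}$ via Proposition~\ref{prop2-6}, and a density argument. The only cosmetic differences are that the paper applies Proposition~\ref{prop3-2} directly on $\Omega$ (your intermediate set $W$ is unnecessary), and the paper handles the passage to $\mathcal{W}_{X,0}^{1,1}(\Omega)$ by splitting $\{|u|>t\}\subset\{|u-u_{k_j}|>t/2\}\cup\{|u_{k_j}|>t/2\}$ and using convergence in measure rather than your Fatou-on-level-sets variant.
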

\begin{proof}
For any $u\in C_{0}^\infty(\Omega)$, we have $(|Xu|+|u|)\in L^{1}(\Omega)$. Let
\begin{equation*}
  Tf(x):=\int_{\Omega}\frac{d(x,y)}{\Lambda(x,d(x,y))}f(y)dy.
\end{equation*}
By Proposition \ref{prop3-2} and \eqref{2-10} we have
\begin{equation}\label{4-2}
  |u(x)|\leq C\int_{\Omega}\frac{d(x,y)}{|B(x,d(x,y))|}(|Xu(y)|+|u(y)|)dy\leq CT(|Xu|+|u|)(x)\qquad \forall x\in \Omega.
\end{equation}
This means
\begin{equation}\label{4-3}
  \{x\in \Omega||u(x)|>t\}\subset  \{x\in \Omega||T(|Xu|+|u|)(x)|>C^{-1}t\}\qquad \forall t>0.
\end{equation}
Since the number of $n$-tuples $I=(i_{1},i_{2},\ldots,i_{n})$ with $1\leq i_{j}\leq l$ is finite, by Proposition \ref{prop3-4} and \eqref{4-2} we can find a positive constant $C>0$ such that for all $n$-tuple $I$,
\begin{equation}
\begin{aligned}\label{4-4}
\int_{\{x\in \Omega||u(x)|>t\}}|\lambda_I(x)|^{\frac{1}{d(I)-1}}dx&\leq \int_{\{x\in \Omega||T(|Xu|+|u|)(x)|>C^{-1}t\}}|\lambda_I(x)|^{\frac{1}{d(I)-1}}dx\\
&\leq Ct^{-\frac{d(I)}{d(I)-1}}\|Xu\|_{L^1(\Omega)}^{\frac{d(I)}{d(I)-1}}\qquad \forall u\in C_{0}^{\infty}(\Omega).
 \end{aligned}
 \end{equation}

It remains to extend \eqref{4-4} to all $u\in \mathcal{W}^{1,1}_{X,0}(\Omega)$.  Clearly, for each $n$-tuple $I$ and any measurable subset $E$ of $\Omega$, we have
\[ \mu_{I}(E):=\int_{E}|\lambda_I(x)|^{\frac{1}{d(I)-1}} dx\leq \max_{x\in \overline{\Omega}}|\lambda_I(x)|^{\frac{1}{d(I)-1}}|E|. \]
 For any $u\in \mathcal{W}^{1,1}_{X,0}(\Omega)$, there exists a sequence $\{u_k\}_{k=1}^{\infty}\subset C_0^\infty(\Omega)$ such that $u_k\to u$ in $\mathcal{W}^{1,1}_{X,0}(\Omega)$. Then, we can select a subsequence $\{u_{k_j}\}_{j=1}^{\infty}\subset \{u_k\}_{k=1}^{\infty}$ such that for any $\varepsilon>0$,
 \begin{equation}\label{4-5}
\lim_{j\to\infty}|\{x\in \Omega||u_{k_j}(x)-u(x)|\geq \varepsilon \}|=0.
 \end{equation}
  Now, for any $t>0$, by \eqref{4-5} we have
 \begin{equation}\label{4-6}
  \begin{aligned}
  &\int_{\{x\in \Omega||u(x)|>t\}}|\lambda_I(x)|^{\frac{1}{d(I)-1}}dx=\mu_I\left(\left\{x\in \Omega  ||u(x)|>t\right\}\right)\\
  &\leq \mu_I\left(\left\{x\in \Omega\Big||u(x)-u_{k_j}(x)|>\frac{t}{2}\right\}\right)+\mu_I\left(\left\{x\in \Omega\Big||u_{k_j}(x)|>\frac{t}{2}\right\}\right)\\
  &\leq \max_{x\in \overline{\Omega}}|\lambda_I(x)|^{\frac{1}{d(I)-1}}\left|\left\{x\in \Omega\Big||u(x)-u_{k_j}(x)|>\frac{t}{2}\right\}\right|+
  Ct^{-\frac{d(I)}{d(I)-1}}\|Xu_{k_j}\|_{L^1(\Omega)}^{\frac{d(I)}{d(I)-1}}.
  \end{aligned}
  \end{equation}
  Letting $j\to \infty$ in \eqref{4-6} and using \eqref{4-4}, the conclusion follows.
\end{proof}

Thanks to Lemma \ref{lemma4-1}, we can obtain that

\begin{lemma}
\label{lemma4-2}
Let $X=(X_{1},X_{2},\ldots,X_{m})$  satisfy assumption (H). Suppose that $\Omega\subset \subset U$ is a bounded open subset. Then, there exists a positive constant $C>0$ such that for any $n$-tuple $I$,
\begin{equation}\label{4-7}
\int_{\Omega} |u(x)|^{\frac{d(I)}{d(I)-1}} |\lambda_I(x)|^{\frac{1}{d(I)-1}}dx\leq C\|Xu\|_{L^1(\Omega)}^{\frac{d(I)}{d(I)-1}}\qquad \forall u\in C_{0}^{\infty}(\Omega).
\end{equation}

\end{lemma}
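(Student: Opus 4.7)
The plan is to upgrade the weighted weak-$L^q$ estimate of Lemma \ref{lemma4-1} (with $q:=\frac{d(I)}{d(I)-1}>1$, since $d(I)\geq n\geq 2$) to the strong-$L^q$ bound in \eqref{4-7} by a Maz'ya-style dyadic truncation argument. First, by applying Proposition \ref{prop2-9} to $u_+$ and $u_-$ separately, one checks that $|u|\in\mathcal{W}^{1,1}_{X,0}(\Omega)$ with $|X|u||\leq |Xu|$ almost everywhere, so without loss of generality I may assume $u\geq 0$.

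For each $k\in\mathbb{Z}$, introduce the dyadic truncation
\[
v_k:=(u-2^{k-1})_+-(u-2^k)_+ = \min\bigl((u-2^{k-1})_+,\,2^{k-1}\bigr).
\]
Two applications of Proposition \ref{prop2-9} yield $v_k\in\mathcal{W}^{1,1}_{X,0}(\Omega)$ together with the chain-rule identity
\[
Xv_k = \chi_{\{2^{k-1}<u\leq 2^k\}}\,Xu \qquad \text{a.e.\ on }\Omega.
\]
Since $v_k\equiv 2^{k-1}$ on $\{u\geq 2^k\}$, one has $\{u>2^k\}\subset\{v_k\geq 2^{k-1}\}$, and applying Lemma \ref{lemma4-1} to $v_k$ (together with continuity of $\mu_I$ from above at the level $2^{k-1}$) gives
\[
a_k:=\mu_I\bigl(\{u>2^k\}\bigr)\;\leq\;C\,2^{-(k-1)q}\,b_{k-1}^q,\qquad b_k:=\int_{\{2^k<u\leq 2^{k+1}\}}|Xu|\,dx,
\]
where $d\mu_I:=|\lambda_I|^{1/(d(I)-1)}\,dx$.

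Finally, a dyadic layer-cake decomposition combined with the above estimate produces
\[
\int_{\Omega}u^q\,d\mu_I \;\leq\;\sum_{k\in\mathbb{Z}}2^{(k+1)q}\,\mu_I\bigl(\{2^k<u\leq 2^{k+1}\}\bigr)\;\leq\;\sum_{k\in\mathbb{Z}}2^{(k+1)q}\,a_k\;\leq\;C\sum_{k\in\mathbb{Z}}b_{k-1}^q.
\]
Since $q\geq 1$ and the $b_k$ are non-negative, the elementary embedding $\ell^1\hookrightarrow\ell^q$ delivers $\sum_k b_{k-1}^q\leq\bigl(\sum_k b_{k-1}\bigr)^q\leq\|Xu\|_{L^1(\Omega)}^q$, which closes \eqref{4-7}. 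The only delicate point is verifying that the truncations $v_k$ lie in $\mathcal{W}^{1,1}_{X,0}(\Omega)$ with the explicit gradient formula above; this is precisely the content of Proposition \ref{prop2-9}. Convergence of the dyadic sums is automatic since $u\in C_0^\infty(\Omega)$ is bounded (so $a_k=0$ for all sufficiently large $k$) and $\mu_I(\Omega)<\infty$ (so $2^{kq}a_k\to 0$ as $k\to-\infty$).
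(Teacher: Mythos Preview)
Your proof is correct and follows essentially the same Maz'ya-type dyadic truncation argument as the paper: your truncations $v_k$ are precisely $2^{k-1}$ times the paper's normalized truncations $u_k:=\phi(2^{1-k}|u|-1)$, and the layer-cake decomposition followed by the $\ell^1\hookrightarrow\ell^q$ step is identical. The only cosmetic difference is that the paper applies the weak-type bound of Lemma \ref{lemma4-1} at the interior level $t=\tfrac12$ (so that $\{u_k=1\}\subset\{u_k>\tfrac12\}$ with strict inequality), whereas you pass to the limit $t\uparrow 2^{k-1}$ via continuity of $\mu_I$ from above; both routes are valid.
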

\begin{proof}
Let $\phi(t)=\max\{0,\min\{t,1\}\}=t_+-(t-1)_+\geq 0$ be the auxiliary function on $\mathbb{R}$. For any $u\in C^\infty_0(\Omega)$ and $i\in \mathbb{Z}$, we define
  \begin{equation}\label{4-8}
u_i(x):=\phi(2^{1-i}|u(x)|-1)=\left\{
                            \begin{array}{ll}
                              0, & \hbox{if $|u(x)|\leq 2^{i-1}$;} \\[1.5mm]
                              2^{1-i}|u(x)|-1, & \hbox{if $2^{i-1}<|u(x)|\leq 2^{i}$;} \\[1.5mm]
                              1, & \hbox{if $|u(x)|>2^{i}$.}
                            \end{array}
                          \right.
\end{equation}
From Proposition \ref{prop2-9}, we see that $u_i\in \mathcal{W}^{1,1}_{X,0}(\Omega)$.  Besides,
   \begin{equation*}
Xu_i= \begin{cases}2^{1-i}{\rm sgn}(u)Xu & \text { if }2^{i-1} < |u(x)| \leq 2^i,\\ 0 & \text { otherwise. }\end{cases}
\end{equation*}
Applying Lemma \ref{lemma4-1} to $u_i$ and using \eqref{2-26} we have
\begin{equation}
\begin{aligned}\label{4-9}
\int_{\{x\in\Omega|u_i(x)>t\}}|\lambda_I(x)|^{\frac{1}{d(I)-1}}dx &\leq Ct^{-\frac{d(I)}{d(I)-1}}\|Xu_i\|_{L^1(\Omega)}^{\frac{d(I)}{d(I)-1}}\\
&\leq Ct^{-\frac{d(I)}{d(I)-1}}2^{(1-i)\frac{d(I)}{d(I)-1}}\left(\int_{\{x\in\Omega|2^{i-1} < |u(x)| \leq 2^i\}}|Xu|dx\right)^{\frac{d(I)}{d(I)-1}}.
\end{aligned}
 \end{equation}
According to \eqref{4-8} and \eqref{4-9}, we deduce that
  \begin{align*}
    &\int_{\Omega}|u(x)|^{\frac{d(I)}{d(I)-1}}|\lambda_I(x)|^{\frac{1}{d(I)-1}}dx=\sum_{i=-\infty}^{\infty}\int_{\{x\in\Omega|2^{i}<|u(x)|\leq 2^{i+1}\}}|u(x)|^{\frac{d(I)}{d(I)-1}} |\lambda_I(x)|^{\frac{1}{d(I)-1}}dx\\
&\leq\sum_{i=-\infty}^\infty  2^{(i+1)\frac{d(I)}{d(I)-1}}\int_{\{x\in\Omega|2^i<|u(x)|\leq 2^{i+1}\}}|\lambda_I(x)|^{\frac{1}{d(I)-1}}dx\\
    &\leq\sum_{i=-\infty}^\infty  2^{(i+1)\frac{d(I)}{d(I)-1}}\int_{\{x\in\Omega|u_i(x)= 1\}}|\lambda_I(x)|^{\frac{1}{d(I)-1}}dx\\
     &\leq \sum_{i=-\infty}^\infty  2^{(i+1)\frac{d(I)}{d(I)-1}}\int_{\{x\in\Omega|u_i(x)> \frac{1}{2}\}}|\lambda_I(x)|^{\frac{1}{d(I)-1}}dx\\
     &\leq C\sum_{i=-\infty}^\infty  2^{(i+1)\frac{d(I)}{d(I)-1}}2^{\frac{d(I)}{d(I)-1}}2^{(1-i)\frac{d(I)}{d(I)-1}}\left(\int_{\{x\in\Omega|2^{i-1} < |u(x)| < 2^i\}}|Xu|dx\right)^{\frac{d(I)}{d(I)-1}}\leq C\|Xu\|_{L^1(\Omega)}^{\frac{d(I)}{d(I)-1}}
  \end{align*}
for any $u\in C_{0}^{\infty}(\Omega)$, where $C>0$ is a positive constant independent of $I$.
\end{proof}

We now present the proof of Theorem \ref{thm1}.
\begin{proof}[Proof of Theorem \ref{thm1}]
We first establish \eqref{1-8} for $k=1$, which asserts for $1\leq p<\tilde{\nu}$,
\begin{equation}\label{4-10}
\|u\|_{L^{\frac{\tilde{\nu} p}{\tilde{\nu}-p}}(\Omega)}\leq C\|Xu\|_{L^p(\Omega)}\qquad\forall u\in \mathcal{W}^{1,p}_{X,0}(\Omega).
\end{equation}

Since $\overline{\Omega}$ is a compact subset of $U$, by Proposition \ref{prop2-1} and condition (H) we can choose a finite collection $\{(x_i,I_i,\lambda_{I_i},U_i)|i=1,\ldots,l_{1}\}$ such that
\begin{itemize}
  \item $x_{1},\ldots,x_{l_{1}}$ are some points in $\overline{\Omega}$;
  \item $U_{i}$ is a bounded open neighborhood of $x_i$, and $\overline{\Omega}\subset \bigcup_{i=1}^{l_{1}}U_{i}$;
  \item For each $1\leq i\leq l_{1}$, $I_{i}$ is a $n$-tuple satisfying $d(I_{i})=\nu(x_{i})\leq \tilde{\nu}$;
\item $C^{-1}\leq |\lambda_{I_i}(y)|\leq C$ holds for all $1\leq i\leq l_{1}$ and $y\in U_{i}$, where $C>1$ is a positive constant independent of $I_{i}$.
\end{itemize}
Therefore, using H\"{o}lder's inequality and  Lemma \ref{lemma4-2}, we have for any $u\in C_0^\infty(\Omega)$,
\begin{equation}\label{4-11}
\begin{aligned}
  \left(\int_\Omega|u(x)|^{\frac{\tilde{\nu}}{\tilde{\nu}-1}}dx\right)^{\frac{\tilde{\nu}-1}{\tilde{\nu}}}&
  \leq C\sum_{i=1}^{l_{1}}\left(\int_{U_i\cap\Omega}|u(x)|^{\frac{\tilde{\nu}}{\tilde{\nu}-1}}dx\right)^{\frac{\tilde{\nu}-1}{\tilde{\nu}}} \leq C\sum_{i=1}^{l_{1}}\left(\int_{U_i\cap\Omega}|u(x)|^{\frac{d(I_i)}{d(I_i)-1}}dx\right)^{\frac{d(I_i)-1}{d(I_i)}}\\
   &\leq C\sum_{i=1}^{l_{1}}\left(\int_{U_i\cap\Omega}|u(x)|^{\frac{d(I_i)}{d(I_i)-1}}|\lambda_{I_i}(x)|^{\frac{1}{d(I_i)-1}}dx\right)^{\frac{d(I_i)-1}{d(I_i)}}\\
 &\leq C \sum_{i=1}^{l_{1}}\left(\int_{\Omega}|u(x)|^{\frac{d(I_i)}{d(I_i)-1}}|\lambda_{I_i}(x)|^{\frac{1}{d(I_i)-1}}dx\right)^{\frac{d(I_i)-1}{d(I_i)}}\leq C \|Xu\|_{L^1(\Omega)}.
\end{aligned}
\end{equation}
Observing that for any $u\in \mathcal{W}_{X,0}^{1,1}(\Omega)$, there exists a sequence $\{u_{k}\}_{k=1}^{\infty}\subset C_{0}^{\infty}(\Omega)$ such that $u_{k}\to u$ in $\mathcal{W}_{X,0}^{1,1}(\Omega)$. From \eqref{4-11} we have
\begin{equation}\label{4-12}
  \|u_{k}\|_{L^{\frac{\tilde{\nu}}{\tilde{\nu}-1}}(\Omega)}\leq C\|Xu_{k}\|_{L^1(\Omega)}\qquad \forall k\geq 1,
\end{equation}
and $\{u_{k}\}_{k=1}^{\infty}$ forms a Cauchy sequence in $L^{\frac{\tilde{\nu}}{\tilde{\nu}-1}}(\Omega)$. Assume that $u_{k}\to \tilde{u}\in L^{\frac{\tilde{\nu}}{\tilde{\nu}-1}}(\Omega)$. Note that $L^{\frac{\tilde{\nu}}{\tilde{\nu}-1}}(\Omega)\subset L^{1}(\Omega)$ and $\mathcal{W}_{X,0}^{1,1}(\Omega)\subset L^{1}(\Omega)$, we obtain that $\tilde{u}=u$ a.e. on $\Omega$. Thus, taking $k\to \infty$ in \eqref{4-12}, we get
\begin{equation}\label{4-13}
 \|u\|_{L^{\frac{\tilde{\nu}}{\tilde{\nu}-1}}(\Omega)}\leq C\|Xu\|_{L^1(\Omega)}\qquad \forall u\in \mathcal{W}_{X,0}^{1,1}(\Omega),
\end{equation}
which gives the case $p=1$ of \eqref{4-10}.

We next address \eqref{4-10} for the case $1<p<\tilde{\nu}$. For any $u\in C_0^\infty(\Omega)$ and $r>1$, $|u|^r\in C_0^1(\Omega)\subset \mathcal{W}^{1,1}_{X,0}(\Omega)$. Setting $r=\frac{p(\tilde{\nu}-1)}{\tilde{\nu}-p}>1$, \eqref{4-13} and H\"{o}lder's inequality yield that for any $u\in C_0^\infty(\Omega)$,
\[
\begin{aligned}
\|u\|_{L^{\frac{\tilde{\nu}p}{\tilde{\nu}-p}}(\Omega)}^{r}&=\||u|^r\|_{L^{\frac{\tilde{\nu}}{\tilde{\nu}-1}}(\Omega)}
\leq C\int_{\Omega}|X|u|^r|dx\leq C\int_{\Omega}|u|^{r-1}|Xu|dx \leq C\|u\|_{L^{\frac{\tilde{\nu}p}{\tilde{\nu}-p}}(\Omega)}^{\frac{\tilde{\nu}(p-1)}{\tilde{\nu}-p}}\|Xu\|_{L^p(\Omega)},
\end{aligned}
\]
which implies that $\|u\|_{L^{\frac{\tilde{\nu}p}{\tilde{\nu}-p}}(\Omega)}\leq C\|Xu\|_{L^p(\Omega)}$
 holds for any $u\in C_{0}^\infty(\Omega)$ and $1<p<\tilde{\nu}$. Using the similar approximation arguments as above, we obtain \eqref{4-10} for $1<p<\tilde{\nu}$.

If $k\geq 2$, $kp< \tilde{\nu}$ and  $\frac{1}{q}=\frac{1}{p}-\frac{k}{\tilde{\nu}}$, we denote by $q_j=\frac{\tilde{\nu}p}{\tilde{\nu}-jp}$ for $j=1,\ldots,k$. Note that $1\leq p<q_{j}<\tilde{\nu}$ for $j=1,\ldots,k-1$ and $q_{k}=q$.  According to \eqref{4-10}, we have
\[
\begin{aligned}
\sum_{|J|=k}\|X^J u\|_{L^p(\Omega)}&=\sum_{j=1}^{m}\sum_{|J|=k-1}\|X_{j}(X^{J} u) \|_{L^p(\Omega)}\geq C\sum_{|J|=k-1}\|  X^{J} u\|_{L^{q_1}(\Omega)}\\
&\geq C\sum_{|J|=k-2}\|  X^J u\|_{L^{q_2}(\Omega)}\geq \cdots\geq C\|u\|_{L^{q_k}(\Omega)}=C\|u\|_{L^q (\Omega)}
\end{aligned}
\]
holds for any $u\in C_{0}^{\infty}(\Omega)$, which implies \eqref{1-8}.

We next prove \eqref{1-9}, in which the case $kp=\tilde{\nu}$ and  $1\leq q<\infty$. If $k=1$, then $p=\tilde{\nu}\geq 2$. For any $q\in[1,+\infty)$, there exists a positive number $r$ such that $1<r<\tilde{\nu}=p$ and $q\leq r^*:=\frac{\tilde{\nu}r}{\tilde{\nu}-r}$. Then,  \eqref{1-8} derives that for any $u\in C_{0}^{\infty}(\Omega)$, we have
\begin{equation}\label{4-14}
\|u\|_{L^q(\Omega)}\leq C\|u\|_{L^{r^*}(\Omega)}\leq C\|Xu\|_{L^r(\Omega)}\leq C\|Xu\|_{L^p(\Omega)}\leq C\sum_{|\alpha|= 1}\|X^{\alpha}u\|_{L^p(\Omega)}.
\end{equation}
Assume that $k\geq 2$ and $kp=\tilde{\nu}\geq 2$. For any $q\in[1,+\infty)$, there exists a positive number $r$ such that $1\leq r<\tilde{\nu}$ and $q\leq r^*:=\frac{\tilde{\nu}r}{\tilde{\nu}-r}$. Note that $\frac{1}{\tilde{\nu}}=\frac{1}{p}-\frac{k-1}{\tilde{\nu}}$ and $p(k-1)<\tilde{\nu}$. It follows from \eqref{1-8} that for any $u\in C_{0}^{\infty}(\Omega)$, we have
\begin{equation}\label{4-15}
\begin{aligned}
\|u\|_{L^{q}(\Omega)}&\leq C\|u\|_{L^{r^{*}}(\Omega)}\leq C\|Xu\|_{L^{r}(\Omega)}\leq C \|Xu\|_{L^{\tilde{\nu}}(\Omega)}\leq C\sum_{|\beta|=k-1}\sum_{j=1}^{m}\|X^{\beta}X_{j}{u}\|_{L^{p}(\Omega)}\\
&\leq C\sum_{|\alpha|=k}\|X^{\alpha}u\|_{L^{p}(\Omega)}.
\end{aligned}
\end{equation}
Consequently, \eqref{1-9} follows from \eqref{4-14} and \eqref{4-15}.
\end{proof}

\subsection{Proof of Theorem \ref{thm2}}
\begin{proof}[Proof of Theorem \ref{thm2}]
For the positive parameters $p,k,\alpha,\beta,s_{1},s_{2}$ satisfying \eqref{conditions},  we have
\begin{equation*}
  \frac{\frac{p}{bs_{2}}}{\frac{p\tilde{\nu}}{\tilde{\nu}-pk}}+\frac{\frac{as_{1}}{bs_{2}}}{s_{1}}
  =\frac{b-a}{bs_{2}}+\frac{a}{bs_{2}}= \frac{1}{s_{2}}~~~~\text{and}~~~~\frac{as_{1}}{bs_{2}}+\frac{p}{bs_{2}}=1.
\end{equation*}
Denoting by $w=\frac{\frac{p\tilde{\nu}}{\tilde{\nu}-pk}}{\frac{p}{bs_{2}}}$ and $r=\frac{s_{1}}{\frac{as_{1}}{bs_{2}}}$, it follows that $\frac{1}{w}+\frac{1}{r}=\frac{1}{s_{2}}$. By \eqref{1-8} and H\"{o}lder's inequality, we have for all $f\in \mathcal{W}_{X,0}^{k,p}(\Omega)\cap L^{s_{1}}(\Omega)$,
\begin{equation*}
  \begin{aligned}
\|f\|_{L^{s_{2}}(\Omega)}^{bs_{2}}&\leq \left(\||f|^{\frac{p}{bs_{2}}}\|_{L^w(\Omega)}
  \||f|^{\frac{as_{1}}{bs_{2}}}\|_{L^r(\Omega)}\right)^{bs_{2}}\\
  &=\left(\|f\|_{L^{\frac{p\tilde{\nu}}{\tilde{\nu}-pk}}(\Omega)}^{\frac{p}{bs_{2}}}
  \|f\|_{L^{s_{1}}(\Omega)}^{\frac{as_{1}}{bs_{2}}} \right)^{bs_{2}}=\|f\|_{L^{\frac{p\tilde{\nu}}{\tilde{\nu}-pk}}(\Omega)}^{p}
  \|f\|_{L^{s_{1}}(\Omega)}^{as_{1}}\\
  &\leq  \left(C\sum_{|J|=k}\|X^{J}f\|_{L^{p}(\Omega)}\right)^{p} \|f\|_{L^{s_{1}}(\Omega)}^{as_{1}}\leq C\sum_{|J|=k}\|X^{J}f\|_{L^{p}(\Omega)}^{p}\|f\|_{L^{s_{1}}(\Omega)}^{as_{1}},
  \end{aligned}
\end{equation*}
where $C>0$ is a positive constant.
\end{proof}

\subsection{Proof of Theorem \ref{thm3}}

\begin{proof}[Proof of Theorem \ref{thm3}]
For any bounded open set $E\subset\subset \Omega$ with $C^{1}$ boundary $\partial E$, we let
\[ r(x):=\inf_{y\in \overline{E}}|x-y|\qquad \forall x\in \Omega, \]
 and set $\delta_0:={\rm dist}(E,\partial \Omega)=\inf_{x\in E,y\in \partial\Omega}|x-y|$. Then, for any $0<\delta<\frac{\delta_{0}}{2}$, we define the function
\[  u_{\delta}(x):=\left(1-\frac{r(x)}{\delta}\right)_{+}\qquad \forall x\in \Omega. \]
 Clearly, $u_{\delta}$ is a Lipschitz function with compact support ${\rm supp}~ u_{\delta}\subset \Omega$, and $u_{\delta}\equiv1$ on $\overline{E}$.  Thus, \cite[Theorem 4.1]{Heinonen2005} indicates that  $u_{\delta}\in W^{1,\infty}(\Omega)\subset \mathcal{W}_{X}^{1,1}(\Omega)$. According to Proposition \ref{prop2-7}, we further have $u_{\delta}\in \mathcal{W}_{X,0}^{1,1}(\Omega)$. Then, for any $0<\lambda<1$,  using \eqref{1-8} we have
\begin{equation}
\begin{aligned}\label{4-16}
|E|^{\frac{\tilde{\nu}-1}{\tilde{\nu}}}&\leq |\{x\in \Omega|u_{\delta}(x)>\lambda\}|^{\frac{\tilde{\nu}-1}{\tilde{\nu}}}\leq \frac{1}{\lambda}\|u_{\delta}\|_{L^{\frac{\tilde{\nu}}{\tilde{\nu}-1}}(\Omega)}\\
&\leq \frac{C}{\lambda}\|Xu_{\delta}\|_{L^{1}(\Omega)}=\frac{C}{\lambda\delta}\int_{\{x\in\Omega|0<r(x)< \delta\}}|Xr|dx.
\end{aligned}
\end{equation}
Letting $\lambda\to 1^{-}$ in \eqref{4-16} and employing the co-area formula, we get
\begin{equation}
\begin{aligned}\label{4-17}
|E|^{\frac{\tilde{\nu}-1}{\tilde{\nu}}}&\leq \frac{C}{\delta}\int_{0}^{\delta}\int_{\{x\in\Omega|r(x)=t\}}|Xr||\nabla r|^{-1}dH_{n-1}dt\\
&=\frac{C}{\delta}\int_{0}^{\delta}\int_{\{x\in\Omega|\tilde{r}(x)=t\}}\left(\sum_{j=1}^{m}\left\langle X_{j}I,\frac{\nabla \tilde{r}}{|\nabla \tilde{r}|}\right\rangle^{2}\right)^{\frac{1}{2}}dH_{n-1}dt,
\end{aligned}
\end{equation}
where $\tilde{r}(x):=\inf_{y\in \partial E}|x-y|$, $\langle \cdot,\cdot\rangle$ denotes the inner product in $\mathbb{R}^n$,
and $H_{n-1}$ is the $(n-1)$-dimensional Hausdorff measure. Letting $\delta\to 0^{+}$ in \eqref{4-17}, we deduce from \cite[Theorem 5.1.3]{Monti2001} and \eqref{def-px} that
\begin{equation}\label{4-18}
  |E|^{\frac{\tilde{\nu}-1}{\tilde{\nu}}}\leq
  C\int_{\partial E}\left(\sum_{j=1}^{m}\left\langle X_{j}I,\eta\right\rangle^{2}\right)^{\frac{1}{2}}dH_{n-1}=C{\rm  Var_{X}}(\chi_{E};\Omega)=CP_{X}(E;\Omega),
\end{equation}
where $\eta=(\eta_{1},\eta_{2},\ldots,\eta_{n})$ denotes the outward unit normal to the $\partial E$.
\end{proof}

\subsection{Proof of Theorem \ref{thm4}}
\begin{proof}[Proof of Theorem \ref{thm4}]
Consider the bilinear form
\begin{equation}\label{4-19}
\mathcal{Q}(u,v):=\int_{\Omega} Xu\cdot Xv dx,~~~~\forall u,v\in \mathcal{H}_{X,0}^1(\Omega).
\end{equation}
 Clearly, $\mathcal{H}_{X,0}^1(\Omega)$ is a dense subspace in $L^2(\Omega)$, and $\mathcal{Q}(\cdot,\cdot)$ is a bilinear, symmetric, non-negative definite, closed functional on $\mathcal{H}_{X,0}^1(\Omega)\times \mathcal{H}_{X,0}^1(\Omega)$. For any $u\in \mathcal{H}_{X,0}^1(\Omega)$, we denote by $\tilde{u}:=\max\{\min\{u,1\},0\}$. It follows from Proposition \ref{prop2-9} that $\tilde{u}\in \mathcal{H}_{X,0}^{1}(\Omega)$ and
  \[ \mathcal{Q}(\tilde{u},\tilde{u})=\int_{\Omega}|X\tilde{u}|^2dx=\int_{\{x\in\Omega|0\leq u(x)\leq 1\}}|Xu|^2dx \leq\int_{\Omega}|Xu|^2dx= \mathcal{Q}(u,u).\]
Thus, $(\mathcal{Q}, \mathcal{H}_{X,0}^1(\Omega))$ is a Dirichlet form. According to  \cite[Theorem 1.3.1, Corollary 1.3.1]{Fukushima1994} and Proposition \ref{prop2-6}, we know that $(\mathcal{Q}, \mathcal{H}_{X,0}^1(\Omega))$ admits a unique generator $\mathcal{L}_{\Omega}$, which is a positive defined self-adjoint operator in $L^2(\Omega)$ with $\text{dom}(\mathcal{L}_{\Omega})\subset \mathcal{H}_{X,0}^{1}(\Omega)$ such that
\begin{equation}\label{4-20}
  \mathcal{Q}(u,v)=(\mathcal{L}_{\Omega}u,v)_{L^2(\Omega)}
\end{equation}
for all $u\in \text{dom}(\mathcal{L}_{\Omega})$ and $v\in \mathcal{H}_{X,0}^{1}(\Omega)$. The operator $\mathcal{L}_{\Omega}$ is the unique self-adjoint extension of $-\triangle_{X}|_{\mathcal{D}(\Omega)}$ with the domain
\[ \begin{aligned}
\text{dom}(\mathcal{L}_{\Omega})&=\{u\in \mathcal{H}_{X,0}^{1}(\Omega)|\exists c\geq 0~\mbox{such that}~|\mathcal{Q}(u,v)|\leq c\|v\|_{L^{2}(\Omega)}~\forall v\in \mathcal{H}_{X,0}^{1}(\Omega)\}\\
&=\{u\in \mathcal{H}_{X,0}^{1}(\Omega)|\triangle_{X} u\in L^2(\Omega)\},
\end{aligned}\]
where $\triangle_{X}=-\sum_{i=1}^{m} X_i^*X_i$ is the H\"{o}rmander operator generated by $X=(X_{1},X_{2},\ldots,X_{m})$.

The spectral theorem (see \cite[Appendix A.5.4]{Grigoryan2009}) allows us to define the corresponding heat semigroup $\{P^{\Omega}_{t}\}_{t\geq 0}$ of $\mathcal{L}_{\Omega}$ such that
\[ P_{t}^{\Omega}:=e^{-t\mathcal{L}_{\Omega}}=\int_{0}^{+\infty}e^{-t\lambda}dE_{\lambda},\]
where $E_{\lambda}$ denotes the spectral resolution of $\mathcal{L}_{\Omega}$  in $L^2(\Omega)$. It follows from \cite[Section 2, p. 509]{Grigoryan2014} that $\{P^{\Omega}_{t}\}_{t\geq 0}$ is a symmetric Markov semigroup. Moreover, by utilizing \cite[Theorem 2.4.2]{Davies1990} and the Sobolev inequality \eqref{1-8} with $p=2$ and $k=1$, we obtain that
\begin{equation}\label{4-21}
\|P^\Omega_{t}f\|_{L^{\infty}(\Omega)}\leq C_{0}t^{-\frac{\tilde{\nu}}{4}}\|f\|_{L^2(\Omega)} \quad\forall t>0,~ f\in L^2(\Omega),
\end{equation}
which yields the ultracontractivity of $\{P^\Omega_t\}_{t\geq 0}$. Here, $C_{0}>0$ is a positive constant. According to \cite[Theorem 2.2.3]{Davies1990} and \eqref{4-21}, we obtain for any $\varepsilon>0$ and $u\in \mathcal{H}_{X,0}^1(\Omega)\cap L^\infty(\Omega)$
\begin{equation}\label{4-22}
\int_{\Omega}|u|^2\ln |u|dx\leq \varepsilon\|Xu\|_{L^2(\Omega)}^2+M(\varepsilon)\|u\|_{L^2(\Omega)}^2+\|u\|_{L^2(\Omega)}^2\ln\|u\|_{L^2(\Omega)},
\end{equation}
where $M(\varepsilon)=\ln C_0-\frac{\tilde{\nu}}{4}\ln \varepsilon$ with $C_0$ in \eqref{4-21}.

Assuming that $p\geq1$,  $k\in \mathbb{N}^{+}$ with $kp\leq \tilde{\nu}$,  we set $$
q=\left\{
            \begin{array}{ll}
             \frac{p\tilde{\nu}}{\tilde{\nu}-pk} , & \hbox{if $kp<\tilde{\nu}$;} \\[3mm]
              \gamma p,& \hbox{if $kp=\tilde{\nu}$;}
            \end{array}
          \right.
$$ for some $\gamma>1$. For any $u\in \mathcal{W}^{k,p}_{X,0}(\Omega)$ with $\|u\|_{L^p(\Omega)}=1$, using Theorem \ref{thm1} and Jensen's inequality, we obtain
\begin{equation}
\begin{aligned}\label{4-23}
   \ln\left( C\sum_{|J|=k}\|X^J u\|_{L^p(\Omega)}\right) &
     \geq\ln\|u\|_{L^q(\Omega)}=\frac{1}{q}\ln\left(\int_{\Omega}|u|^{q-p}|u|^pdx\right)\geq  \left(\frac{1}{p}-\frac{1}{q}\right)\int_\Omega|u|^p\ln|u|^p dx\\
&=\left\{
    \begin{array}{ll}
      \frac{k}{\tilde{\nu}}\int_\Omega|u|^p\ln|u|^p dx, & \hbox{if $kp<\tilde{\nu}$;} \\[3mm]
    \frac{k}{\tilde{\nu}}\left(1-\frac{1}{\gamma} \right)\int_\Omega|u|^p\ln|u|^p dx, & \hbox{if $kp=\tilde{\nu}$.}
    \end{array}
  \right.
  \end{aligned}
\end{equation}
Taking $\gamma\to \infty$ in \eqref{4-23}, we derive the second inequality in \eqref{1-16}, while the first inequality in \eqref{1-16} is due to
 $\ln Cx\leq \frac{C}{e}x$ for $C>0$ and $x>0$.
\end{proof}

\subsection{Proof of Theorem \ref{thm5}}

Next, we devote to Theorem \ref{thm5}. The proof of Theorem \ref{thm5} is based on the following lemma.

\begin{lemma}\label{lemma4-3}
Let $X=(X_1,X_{2},\ldots,X_m)$ satisfy condition (H). Suppose that $\Omega\subset\subset U$ is a bounded open subset of $U$, and $p>\tilde{\nu}$ is a positive real number. Then, there exists a positive constant $C>0$ such that for any $u\in C_0^\infty(\Omega)$,
\begin{equation}\label{4-24}
|u(x)|\leq C\|Xu\|_{L^p(\Omega)}\qquad \forall x\in \overline{\Omega},
\end{equation}
and
\begin{equation}\label{4-25}
|u(x)-u(y)|\leq Cd^{1-\frac{\tilde{\nu}}{p}}(x,y)\|Xu\|_{L^p(\Omega)} \qquad \forall x,y\in \overline{\Omega},
\end{equation}
where $C>0$ is a positive constant.
\end{lemma}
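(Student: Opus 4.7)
The strategy is to reduce both inequalities to the $L^{p'}$-integrability (with $p' = p/(p-1)$) of the kernel $y \mapsto d(x,y)/|B(x,d(x,y))|$, controlled via Proposition~\ref{prop2-5}. The key algebraic observation is that applying Proposition~\ref{prop2-5} with $\mu = \eta = p'$ forces $\xi = p' - (p'-1)\tilde{\nu} = (p-\tilde{\nu})/(p-1)$, which is strictly positive precisely because $p > \tilde{\nu}$. This single fact drives both estimates, and moreover $\xi/p' = 1-\tilde{\nu}/p$ is exactly the H\"older exponent appearing in \eqref{4-25}.

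For \eqref{4-24}, I would start from the Type~II representation formula (Proposition~\ref{prop3-2}) and apply H\"older's inequality to obtain
\begin{equation*}
|u(x)| \leq C\bigl(\|Xu\|_{L^{p}(\Omega)} + \|u\|_{L^{p}(\Omega)}\bigr)\left(\int_{\Omega}\left(\frac{d(x,y)}{|B(x,d(x,y))|}\right)^{p'}dy\right)^{1/p'}.
\end{equation*}
To bound the weight integral uniformly on $\overline{\Omega}$ without invoking assumption~(A), I split $\Omega$ into the near piece $\Omega \cap B(x,\rho_{\overline{\Omega}}/2)$ and its complement. On the near piece, \eqref{2-16} supplies a bound independent of $x$. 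On the far piece, Propositions~\ref{prop2-2} and~\ref{prop2-3} give a uniform lower bound $|B(x,\rho_{\overline{\Omega}}/2)| \geq c > 0$, while $d(x,y) \leq \delta_{\overline{\Omega}}$, so the integrand is uniformly bounded and the piece contributes at most $C|\Omega|$. The residual $\|u\|_{L^{p}(\Omega)}$-term is absorbed by the degenerate Friedrichs--Poincar\'e inequality (Proposition~\ref{prop2-6}), producing \eqref{4-24}.

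For \eqref{4-25}, fix a threshold $\delta_{*} > 0$ small enough so that $2d(x,y) < r_{0}$ and $(1+\alpha_{1})\cdot 2 d(x,y) \leq \rho_{\overline{\Omega}}$ whenever $d(x,y) < \delta_{*}$, where $r_{0}$ and $\alpha_{1}$ come from Proposition~\ref{prop3-1}. If $d(x,y) \geq \delta_{*}$, \eqref{4-24} gives the claim directly since $d(x,y)^{1-\tilde{\nu}/p} \geq \delta_{*}^{1-\tilde{\nu}/p}$. If $d(x,y) < \delta_{*}$, set $\rho := 2d(x,y)$ and $B := B(x,\rho)$, so that $x,y \in B$. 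Applying Proposition~\ref{prop3-1} with $x_{0} = x$ and radius $\rho$, together with H\"older's inequality, we obtain for each $z \in \{x,y\}$
\begin{equation*}
|u(z) - u_{B}| \leq C\|Xu\|_{L^{p}(\Omega)}\left(\int_{B(x,\alpha_{1}\rho)}\left(\frac{d(z,w)}{|B(z,d(z,w))|}\right)^{p'}dw\right)^{1/p'}.
\end{equation*}
Since $d(z,w) \leq (1+\alpha_{1})\rho$ for $z \in B$ and $w \in B(x,\alpha_{1}\rho)$, we have $B(x,\alpha_{1}\rho) \subset B(z,(1+\alpha_{1})\rho)$; then \eqref{2-16} applied to this $z$-centered ball gives a bound of order $\rho^{\xi}$, whence $|u(z) - u_{B}| \leq C\rho^{\xi/p'}\|Xu\|_{L^{p}(\Omega)} = C\rho^{1-\tilde{\nu}/p}\|Xu\|_{L^{p}(\Omega)}$. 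The triangle inequality $|u(x)-u(y)| \leq |u(x)-u_{B}| + |u(y)-u_{B}|$ now yields \eqref{4-25}.

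The main technical hurdle is the careful choreography of radii: the threshold $\delta_{*}$ must be chosen so that the larger integration domain $B(x,\alpha_{1}\rho)$ sits inside the admissibility zone of Proposition~\ref{prop2-5}, while the averaging ball $B$ in the Type~I formula must be comparable in size to $d(x,y)$ so that both endpoints lie in it. A secondary subtlety is that without assumption~(A) the weight integral over $\Omega$ in \eqref{4-24} cannot be handled by a single application of Proposition~\ref{prop2-5}; the near/far decomposition, along with compactness of $\overline{\Omega}$ and the uniform lower bound on $|B(x,\rho_{\overline{\Omega}}/2)|$, is what resolves this point.
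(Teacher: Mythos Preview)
Your proof is correct and follows essentially the same route as the paper's: both parts hinge on the $L^{p'}$-integrability of the kernel $d(x,y)/|B(x,d(x,y))|$ via Proposition~\ref{prop2-5} with $\mu=\eta=p'$ and $\xi=(p-\tilde\nu)/(p-1)>0$, combined with the Type~II formula plus Friedrichs--Poincar\'e for \eqref{4-24} and the Type~I formula plus a near/far dichotomy for \eqref{4-25}. The only cosmetic difference is your choice of center in the short-distance case of \eqref{4-25}: you take $x_0=x$ with radius $\rho=2d(x,y)$, whereas the paper picks a midpoint $x_0\in B(x,\delta)\cap B(y,\delta)\cap\Omega$ with radius $\delta=d(x,y)$; your choice is arguably cleaner since it sidesteps the (minor) question of why such a midpoint lies in $\overline{\Omega}$, at the harmless cost of a factor~$2$ in the threshold.
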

\begin{proof}
By Proposition \ref{prop2-5}, we deduce that for any $x\in\overline{\Omega}$ and $0<\varepsilon\leq \rho_{\overline{\Omega}}$, we have
\begin{equation}\label{4-26}
  \int_{B(x,\varepsilon)}\frac{d(x,y)^{\frac{p}{p-1}}}{|B(x,d(x,y))|^{\frac{p}{p-1}}}dy\leq C\varepsilon^{\frac{p-\tilde{\nu}}{p-1}},
\end{equation}
where $C>0$ is a positive constant independent of $\varepsilon$. Taking $\varepsilon=\frac{\rho_{\overline{\Omega}}}{2}$ in \eqref{4-26} and using Proposition \ref{prop2-2} and Proposition \ref{prop2-3}, we obtain that for any $x\in \overline{\Omega}$,
\begin{equation}
\begin{aligned}\label{4-27}
\int_{\Omega}\frac{d(x,y)^{\frac{p}{p-1}}}{|B(x,d(x,y))|^{\frac{p}{p-1}}}dy&\leq\int_{\Omega\setminus B(x,\frac{\rho_{\overline{\Omega}}}{2})}\frac{d(x,y)^{\frac{p}{p-1}}}{|B(x,d(x,y))|^{\frac{p}{p-1}}}dy+\int_{ B(x,\frac{\rho_{\overline{\Omega}}}{2})}\frac{d(x,y)^{\frac{p}{p-1}}}{|B(x,d(x,y))|^{\frac{p}{p-1}}}dy\\
&\leq  C\left(\left( \frac{\rho_{\overline{\Omega}}}{2}\right)^{\frac{p-\tilde{\nu}}{p-1}}+\frac{|\Omega|\delta_{\overline{\Omega}}^{\frac{p}{p-1}}}{\left( \frac{\rho_{\overline{\Omega}}}{2}\right)^{\frac{p\tilde{\nu}}{p-1}}}\right),
\end{aligned}
\end{equation}
where $\delta_{\overline{\Omega}}=\sup_{x,y\in\overline{\Omega}}d(x,y)$. Then, by Proposition \ref{prop2-6}, Proposition \ref{prop3-2} and \eqref{4-27}, we get
  \begin{equation*}
    \begin{aligned}
    |u(x)|&\leq C\int_{\Omega}\frac{d(x,y)}{|B(x,d(x,y))|}(|Xu(y)|+|u(y)|)dy\\
    &\leq C\left(\|Xu\|_{L^p(\Omega)}+\|u\|_{L^p(\Omega)}\right)\left(\int_{\Omega}\frac{d(x,y)^{\frac{p}{p-1}}}{|B(x,d(x,y))|^{\frac{p}{p-1}}}dy\right)^{\frac{p-1}{p}}\\
    &\leq C\|Xu\|_{L^p(\Omega)}\qquad \forall x\in \overline{\Omega},
    \end{aligned}
  \end{equation*}
which yields \eqref{4-24}.

We next prove \eqref{4-25}. By Proposition \ref{prop3-1}, there exist positive constants $C>0$, $\alpha_{1}\geq 1$ and $r_0>0$ such that for any $x_0\in \overline{\Omega}$ and $0<r<r_0$, we have
\begin{equation}\label{4-28}
  |f(x)-f_{B}|\leq C\int_{B(x_{0},\alpha_{1}r)}\frac{d(x,y)}{|B(x,d(x,y))|}|Xf(y)|dy\qquad \forall x\in B(x_0,r),
\end{equation}
and any $f\in C^{\infty}(B(x_0,\alpha_{1}r))$, where $\alpha_{1}\geq 1$ is a positive constant independent of $f$ and $B(x_0,r)$, and $f_{B}=|B(x_0,r)|^{-1}\int_{B(x_0,r)}f(y)dy$. Then, for any $u\in C_{0}^{\infty}(\Omega)$ and any $x,y\in \overline{\Omega}$, we denote by $\delta:=d(x,y)$. The proof of \eqref{4-25} will be divided into the following two cases:\\
\textbf{\emph{Case 1:}} $0<\delta<\min\left\{\frac{\rho_{\overline{\Omega}}}{\alpha_{1}+1},r_{0}\right\}$ with $\rho_{\overline{\Omega}}$ being the positive constant determined by Proposition \ref{prop2-3}. Let $x_{0}\in B(x,\delta)\cap B(y,\delta)\cap \Omega$ such that $x,y\in B(x_{0},\delta)$. It follows that
\[B(x_{0},\alpha_{1}\delta)\subset B(x,(\alpha_{1}+1)\delta)\cap B(y,(\alpha_{1}+1)\delta).\] For simplification, we set $B_{x_{0}}:=B(x_{0},\delta)$ and $T(y,z):=\frac{d(y,z)}{|B(y,d(y,z))|}$. Therefore, \eqref{2-16} and \eqref{4-28} give that
\begin{equation}\label{4-29}
\begin{aligned}
&|u(x)-u(y)|\leq |u(x)-u_{B_{x_{0}}}|+|u(y)-u_{B_{x_{0}}}|\\
    &\leq C\int_{B(x_{0},\alpha_{1}\delta)}T(x,z)|Xu(z)|dz
    +C\int_{B(x_{0},\alpha_{1}\delta)}T(y,z)|Xu(z)|dz\\
    &\leq C\int_{B(x,(\alpha_{1}+1)\delta)}T(x,z)|Xu(z)|dz
    +C\int_{B(y,(\alpha_{1}+1)\delta)}T(y,z)|Xu(z)|dz\\
    &\leq C\left[\left(\int_{B(x,(\alpha_{1}+1)\delta)}T(x,z)^{\frac{p}{p-1}} dz\right)^{\frac{p-1}{p}}
    +\left(\int_{B(y,(\alpha_{1}+1)\delta)}T(y,z)^{\frac{p}{p-1}}  dz\right)^{\frac{p-1}{p}}\right]
    \|Xu\|_{L^p(\Omega)}\\
    &\leq C(\alpha_{1}+1)^{1-\frac{\tilde{\nu}}{p}}\delta^{1-\frac{\tilde{\nu}}{p}}  \|Xu\|_{L^p(\Omega)},
\end{aligned}
\end{equation}
where  $C>0$ is a positive constant independent of $x,y$ and $x_{0}$.\\

\noindent
\textbf{\emph{Case 2:}} $\delta\geq\min\left\{\frac{\rho_{\overline{\Omega}}}{\alpha_{1}+1},r_{0}\right\}$. Employing \eqref{4-24}, we obtain
\begin{equation}\label{4-30}
|u(x)-u(y)|\leq 2C\|Xu\|_{L^p(\Omega)}\leq \frac{2C}{\left(\min\left\{\frac{\rho_{\overline{\Omega}}}{\alpha_{1}+1},r_{0}\right\} \right)^{1-\frac{\tilde{\nu}}{p}}}\delta^{1-\frac{\tilde{\nu}}{p}}\|Xu\|_{L^p(\Omega)},
\end{equation}
where $C>0$ is a positive constant independent of $x$ and $y$.

As a result of \eqref{4-29} and \eqref{4-30}, we have for any $u\in C_{0}^{\infty}(\Omega)$,
\begin{equation*}
|u(x)-u(y)|\leq Cd(x,y)^{1-\frac{\tilde{\nu}}{p}}\|Xu\|_{L^p(\Omega)}\qquad \forall x,y\in \overline{\Omega},
\end{equation*}
which gives \eqref{4-25}.
\end{proof}

Now, we prove Theorem \ref{thm5}.
\begin{proof}[Proof of Theorem \ref{thm5}]
Obviously, Lemma \ref{lemma4-3} implies that for $p>\tilde{\nu}$,
\begin{equation}\label{4-31}
\|u\|_{\mathcal{S}^{0,1-\frac{\tilde{\nu}}{p}}(\overline{\Omega})}\leq C \|Xu\|_{L^p(\Omega)}
\end{equation}
holds for any $u\in \mathcal{W}^{1,p}_{X,0}(\Omega)$, which yields  \eqref{holder-embedding} for $k=1$ since $\frac{\tilde{\nu}}{p}\notin \mathbb{N}^{+}$.

Let us consider the case $k\geq 2$. If $\frac{\tilde{\nu}}{p}\notin \mathbb{N}^{+}$, we set $l=\left[\frac{\tilde{\nu}}{p}\right]$. It follows that $lp<\tilde{\nu}<kp$ and $k\geq l+1$. Denote by $r=\frac{p\tilde{\nu}}{\tilde{\nu}-lp}>\tilde{\nu} $. According to Theorem \ref{thm1} and \eqref{4-31} we have for any $u\in C_{0}^{\infty}(\Omega)$,
  \begin{equation}\label{4-32}
    \begin{aligned}
    \sum_{l+1\leq|J|\leq k}\|X^{J}u\|_{L^p(\Omega)}&\geq C\sum_{1\leq|J|\leq k-l}\|X^{J}u\|_{L^r(\Omega)}\geq C\sum_{0\leq|J|\leq k-l-1}\|X^{J}u\|_{\mathcal{S}^{0,1-\frac{\tilde{\nu}}{r}}(\overline{\Omega})}.
    \end{aligned}
  \end{equation}
Combining \eqref{4-32} and Proposition \ref{prop2-6}, we get
  \[\|u\|_{\mathcal{S}^{k-\left[\frac{\tilde{\nu}}{p}\right]-1,\left[\frac{\tilde{\nu}}{p}\right]+1-\frac{\tilde{\nu}}{p}}(\overline{\Omega})}
  \leq C\sum_{|J|= k}\|X^{J}u\|_{L^p(\Omega)}\qquad \forall u\in C_{0}^{\infty}(\Omega).\]

When $\frac{\tilde{\nu}}{p}\in \mathbb{N}^{+}$, we let $l=\frac{\tilde{\nu}}{p}-1\geq 0$. As a result, $k-l-1=k-\frac{\tilde{\nu}}{p}\geq 1$. Denote by $r:=\frac{p\tilde{\nu}}{\tilde{\nu}-lp}=\tilde{\nu}$ and choose a positive number $q>\tilde{\nu}$. By Theorem \ref{thm1} and \eqref{4-31} we have for any $u\in C_{0}^{\infty}(\Omega)$,
  \begin{equation*}
    \begin{aligned}
    \sum_{l+2\leq|J|\leq k}\|X^{J}u\|_{L^p(\Omega)}&\geq C\sum_{2\leq|J|\leq k-l}\|X^{J}u\|_{L^r(\Omega)}\geq C\sum_{1\leq|J|\leq k-l-1}\|X^{J}u\|_{L^q(\Omega)}\\
    &\geq C\sum_{0\leq|J|\leq k-l-2}\|X^{J}u\|_{\mathcal{S}^{0,1-\frac{\tilde{\nu}}{q}}(\overline{\Omega})},
    \end{aligned}
  \end{equation*}
  which, combining with the Proposition \ref{prop2-6}, derives that
  \[\|u\|_{\mathcal{S}^{k-\frac{\tilde{\nu}}{p}-1,\epsilon}(\overline{\Omega})}
  \leq C \sum_{|\alpha|= k}\|X^{\alpha}u\|_{L^p(\Omega)}\qquad \forall u\in C_{0}^{\infty}(\Omega),\]
  where $\epsilon=1-\frac{\tilde{\nu}}{q}\in(0,1)$.
\end{proof}

\subsection{Proof of Theorem \ref{thm6}}

We next introduce the following abstract version of the Rellich-Kondrachov compactness theorem, which will be used in constructing the compact embedding result.
\begin{proposition}[{\cite[Theorem 4]{Hajlasz1998}}]
\label{prop4-1}
Let $Y$ be a set equipped with a finite measure $\mu$. Assume that a linear normed space $G$ of measurable functions on $Y$ has the following two properties:
\begin{enumerate}
  \item [(1)] There exists a constant $q>1$ such that the embedding $G\hookrightarrow L^q(Y,\mu)$ is bounded;
  \item [(2)] Every bounded sequence in $G$ contains a subsequence that converges almost everywhere.
\end{enumerate}
 Then the embedding $G\hookrightarrow L^s(Y,\mu)$ is compact for every $1\leq s<q$.
\end{proposition}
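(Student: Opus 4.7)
The plan is to take an arbitrary bounded sequence in $G$ and extract a subsequence that converges in $L^s(Y,\mu)$ for any $1\leq s<q$. Let $\{f_n\}\subset G$ be bounded, say $\|f_n\|_G\leq M_0$. By hypothesis (2), I would pass to a subsequence (still denoted $\{f_n\}$) that converges almost everywhere to some measurable function $f$. By hypothesis (1), $\|f_n\|_{L^q(Y,\mu)}\leq CM_0$ for all $n$, and Fatou's lemma then yields $f\in L^q(Y,\mu)$ with $\|f\|_{L^q}\leq CM_0$. In particular $\|f_n-f\|_{L^q(Y,\mu)}\leq 2CM_0=:M$ for all $n$.

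Next, I would establish uniform integrability of $\{|f_n-f|^s\}$ using H\"older's inequality: for any measurable $A\subset Y$,
\begin{equation*}
\int_{A}|f_n-f|^{s}\, d\mu \leq \mu(A)^{1-\frac{s}{q}}\left(\int_{A}|f_n-f|^{q}\, d\mu\right)^{\frac{s}{q}}\leq M^{s}\mu(A)^{1-\frac{s}{q}}.
\end{equation*}
Since $s<q$, the exponent $1-s/q$ is strictly positive, so for every $\varepsilon>0$ the set function $A\mapsto \int_A|f_n-f|^s d\mu$ is uniformly small (independently of $n$) whenever $\mu(A)$ is sufficiently small. This gives the key equi-absolute-continuity.

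To pass from equi-absolute-continuity plus a.e.\ convergence to $L^s$-convergence, I would invoke Egorov's theorem (which applies because $\mu$ is finite): for any $\delta>0$ there exists $A_\delta\subset Y$ with $\mu(Y\setminus A_\delta)<\delta$ on which $f_n\to f$ uniformly. Splitting
\begin{equation*}
\int_{Y}|f_n-f|^{s}\, d\mu=\int_{A_\delta}|f_n-f|^{s}\, d\mu+\int_{Y\setminus A_\delta}|f_n-f|^{s}\, d\mu,
\end{equation*}
the second piece is bounded by $M^{s}\delta^{1-s/q}$ from the H\"older estimate, while the first piece tends to $0$ as $n\to\infty$ thanks to the uniform convergence and $\mu(A_\delta)\leq \mu(Y)<\infty$. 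Given $\varepsilon>0$, I would first fix $\delta$ so that $M^{s}\delta^{1-s/q}<\varepsilon/2$, and then choose $N$ so that the first piece is below $\varepsilon/2$ for $n\geq N$. This yields $f_n\to f$ in $L^s(Y,\mu)$, proving the compactness of $G\hookrightarrow L^s(Y,\mu)$.

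The main obstacle is conceptual rather than computational: one must combine three ingredients correctly, namely the a.e.\ convergence from (2), the uniform $L^q$-bound from (1), and the finiteness of $\mu$, to obtain $L^s$-convergence. The essential leverage is that $s<q$, which makes the exponent $1-s/q>0$ in the H\"older estimate and thereby converts the $L^q$-bound into equi-absolute-continuity of the measures $|f_n-f|^s d\mu$; without the strict inequality $s<q$ this scheme collapses, which reflects the well-known failure of compactness at the endpoint exponent.
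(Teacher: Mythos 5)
Your proof is correct. The paper does not supply its own argument for this proposition — it imports it as \cite[Theorem 4]{Hajlasz1998} — and your reasoning (a.e.\ convergence of a subsequence from hypothesis (2), the uniform $L^q$ bound from hypothesis (1) together with Fatou, H\"older's inequality on sets of small measure to obtain equi-absolute continuity of $|f_n-f|^s\,d\mu$, and then Egorov on the finite measure space, i.e.\ essentially Vitali's convergence theorem) is precisely the standard proof of the cited Haj{\l}asz--Koskela result, including the correct use of the strict inequality $s<q$.
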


\begin{proof}[Proof of Theorem \ref{thm6}]
By Theorem \ref{thm1},  for $p\geq 1$, $k\in \mathbb{N}^{+}$ with $kp<\tilde{\nu}$, the embedding
$  \mathcal{W}_{X,0}^{k,p}(\Omega)\hookrightarrow L^{q}(\Omega)$ is continuous for $q=\frac{\tilde{\nu}p}{\tilde{\nu}-kp}>1$. According to \cite[Corollary 3.3]{Danielli1991}, $\mathcal{W}_{X,0}^{1,p}(\Omega)$ is compactly embedded into $L^p(\Omega)$ for $p\geq 1$. Thus, for any bounded sequence $\{u_k\}_{k=1}^{\infty}$ in $\mathcal{W}_{X,0}^{k,p}(\Omega)\subset \mathcal{W}_{X,0}^{1,p}(\Omega)$, there exists a subsequence $\{u_{k_{j}}\}_{j=1}^{\infty}\subset\{u_k\}_{k=1}^{\infty}$ such that $u_{k_{j}}\to u$ in $L^{p}(\Omega)$. The Riesz theorem allows us to find a subsequence $\{v_{j}\}_{j=1}^{\infty}\subset  \{u_{k_{j}}\}_{j=1}^{\infty}$  such that $v_{j}\to u$ almost everywhere on $\Omega$ as $j\to +\infty$. Hence, we conclude from Proposition \ref{prop4-1} that the embedding $\mathcal{W}_{X,0}^{k,p}(\Omega)\hookrightarrow L^s(\Omega)$ is compact for $s\in [1,\frac{\tilde{\nu}p}{\tilde{\nu}-kp})$.
\end{proof}

\subsection{Proof of Theorem \ref{thm7}}

\begin{proof}[Proof of Theorem \ref{thm7}]

Assuming that $u\in \mathcal{W}^{1,\tilde{\nu}}_{X,0}(\Omega)$ with $\|Xu\|_{L^{\tilde{\nu}}(\Omega)}\leq 1$, we choose an approximating sequence $\{u_{k}\}_{k=1}^{\infty}\subset C_{0}^{\infty}(\Omega)$ such that $u_{k}\to u$ in $\mathcal{W}^{1,\tilde{\nu}}_{X,0}(\Omega)$ as $k\to +\infty$. For $p>1$, $q=\frac{p}{p-1}$, and $v\in L^{q}(\Omega)$ with $\|v\|_{L^q(\Omega)}\leq 1$,  by  Proposition \ref{prop3-2} and H\"{o}lder's inequality we have
\begin{equation}\label{4-33}
\begin{aligned}
&\int_{\Omega}|u_{k}(x)v(x)|dx\leq C_{0}\int_{\Omega}\int_{\Omega}T(x,y)|v(x)|(|Xu_{k}(y)|+|u_{k}(y)|)dydx\\
&= C_{0}\int_{\Omega}\int_{\Omega}(T(x,y)^{\frac{1}{\tilde{\nu}p}}(|Xu_{k}(y)|+|u_{k}(y)|)|v(x)|^{\frac{1}{\tilde{\nu}}})
(T(x,y)^{1-\frac{1}{\tilde{\nu}p}}|v(x)|^{\frac{\tilde{\nu}-1}{\tilde{\nu}}})dydx\\
&\leq  C_{0}\left(\int_{\Omega}\int_{\Omega}T(x,y)^{\frac{1}{p}}(|Xu_{k}(y)|+|u_{k}(y)|)^{\tilde{\nu}}|v(x)|dydx\right)^{\frac{1}{\tilde{\nu}}}
\left(\int_{\Omega}\int_{\Omega}T(x,y)^{\frac{\tilde{\nu}p-1}{\tilde{\nu}p-p}}|v(x)|dydx\right)^{\frac{\tilde{\nu}-1}{\tilde{\nu}}},
\end{aligned}
\end{equation}
where $T(x,y)=\frac{d(x,y)}{|B(x,d(x,y))|}$ and $ C_{0}>0$ is a positive constant independent of $u_{k}$. From Proposition \ref{prop2-4} and Proposition \ref{prop2-5}, we have
\begin{equation}\label{4-34}
\begin{aligned}
&\int_{\Omega}\int_{\Omega}T(x,y)^{\frac{1}{p}}(|Xu_{k}(y)|+|u_{k}(y)|)^{\tilde{\nu}}|v(x)|dydx\\
&\leq  \int_{\Omega}(|Xu_{k}(y)|+|u_{k}(y)|)^{\tilde{\nu}}\left( \int_{\Omega}\frac{d(x,y)}{|B(x,d(x,y))|}dx\right)^{\frac{1}{p}}\left(\int_{\Omega}|v|^{q}dx\right)^{\frac{1}{q}}dy\\
&\leq 2^{\tilde{\nu}-1}\|u_{k}\|_{\mathcal{W}_{X,0}^{1,\tilde{\nu}}(\Omega)}^{\tilde{\nu}}C_{3}^{\frac{1}{p}} \left( \int_{\Omega}\frac{d(y,x)}{|B(y,d(y,x))|}dx\right)^{\frac{1}{p}}\\
&\leq \left(\frac{C_{1}+C_{2}}{C_{1}}\right)^{\frac{1}{p}}\cdot \frac{2^{\tilde{\nu}-1+\frac{1}{p}}C_{3}^{\frac{2}{p}}}{(CC_{1})^{\frac{1}{\tilde{\nu}p}}}|\Omega|^{\frac{1}{p\tilde{\nu}}}\|u_{k}\|_{\mathcal{W}_{X,0}^{1,\tilde{\nu}}(\Omega)}^{\tilde{\nu}} ,
\end{aligned}
\end{equation}
where $C,C_{1},C_{2},C_{3}$ are the positive constants appeared in Proposition \ref{prop2-2}-Proposition \ref{prop2-4}. Moreover, applying Proposition \ref{prop2-5} for $\mu=\eta=\frac{\tilde{\nu}p-1}{\tilde{\nu}p-p}$ and $\xi=\frac{1}{p}\in(0,1)$, we have
\begin{equation}\label{4-35}
\begin{aligned}
&\int_{\Omega}\int_{\Omega}T(x,y)^{\frac{\tilde{\nu}p-1}{\tilde{\nu}p-p}}|v(x)|dydx=\int_{\Omega}|v(x)|\left(\int_{\Omega}\frac{d(x,y)^{\frac{\tilde{\nu}p-1}{\tilde{\nu}p-p}}}{|B(x,d(x,y))|^{\frac{\tilde{\nu}p-1}{\tilde{\nu}p-p}}}dy\right)dx\\
&\leq \left[\left(\frac{C_{2}}{C_{1}}\right)^{\frac{\tilde{\nu}p-1}{p(\tilde{\nu}-1)}}+1\right]\frac{C_{3}}{(CC_{1})^{\frac{\tilde{\nu}p-1}{p\tilde{\nu}(\tilde{\nu}-1)}}}\frac{2^{\frac{1}{p}}}{2^{\frac{1}{p}}-1}|\Omega|^\frac{1}{\tilde{\nu}p}\int_{\Omega}|v(x)|dx\\
&\leq \left[\left(\frac{C_{2}}{C_{1}}\right)^{\frac{\tilde{\nu}p-1}{p(\tilde{\nu}-1)}}+1\right]\frac{C_{3}}{(CC_{1})^{\frac{\tilde{\nu}p-1}{p\tilde{\nu}(\tilde{\nu}-1)}}}\frac{2^{\frac{1}{p}}}{2^{\frac{1}{p}}-1}|\Omega|^\frac{1}{\tilde{\nu}p}\left(\int_{\Omega}|v(x)|^{q}dx\right)^{\frac{1}{q}}|\Omega|^{1-\frac{1}{q}}\\
&\leq\left[\left(\frac{C_{2}}{C_{1}}\right)^{\frac{\tilde{\nu}p-1}{p(\tilde{\nu}-1)}}+1\right]\frac{C_{3}}{(CC_{1})^{\frac{\tilde{\nu}p-1}{p\tilde{\nu}(\tilde{\nu}-1)}}}\frac{2^{\frac{1}{p}}}{2^{\frac{1}{p}}-1}|\Omega|^{\frac{1}{p\tilde{\nu}}+\frac{1}{p}}.
\end{aligned}
\end{equation}
Hence, \eqref{4-33}-\eqref{4-35} derive that for any $p>1$ and $k\geq 1$,
\begin{equation}\label{4-36}
\begin{aligned}
\|u_{k}\|_{L^p(\Omega)}&=\sup_{v\in L^q(\Omega),~\|v\|_{L^q(\Omega)}\leq 1}\int_{\Omega}|u_{k}(x)v(x)|dx\\
&\leq C_{0}\left(1+\frac{C_{2}}{C_{1}}\right)^{\frac{1}{\tilde{\nu}p}}\left(1+\left(\frac{C_{2}}{C_{1}}\right)^{\frac{\tilde{\nu}p-1}{p(\tilde{\nu}-1)}}\right)^{\frac{\tilde{\nu}-1}{\tilde{\nu}}}
\frac{2^{1+\frac{1}{p}-\frac{1}{\tilde{\nu}}}}{(2^{\frac{1}{p}}-1)^{1-\frac{1}{\tilde{\nu}}}}\frac{C_{3}^{1-\frac{1}{\tilde{\nu}}+\frac{2}{\tilde{\nu}p}}}{(CC_{1})^{\frac{1}{\tilde{\nu}}}}
\|u_{k}\|_{\mathcal{W}_{X,0}^{1,\tilde{\nu}}(\Omega)}|\Omega|^{\frac{1}{p}}.
\end{aligned}
\end{equation}
Observing that $u_{k}\to u$ in $\mathcal{W}^{1,\tilde{\nu}}_{X,0}(\Omega)$, by \eqref{1-9} we have
\begin{equation*}
 |\|u_{k}\|_{L^p(\Omega)}-\|u\|_{L^p(\Omega)}|\leq \|u-u_{k}\|_{L^p(\Omega)}\leq C\|Xu-Xu_{k}\|_{\mathcal{W}^{1,\tilde{\nu}}_{X,0}(\Omega)}\to 0\quad\mbox{as}\quad k\to\infty.
\end{equation*}
Thus, taking $k\to \infty$ in \eqref{4-36}, we obtain from inequality $2^{\frac{1}{p}}-1\geq \frac{1}{p}2^{\frac{1}{p}-1}$ that
\begin{equation}\label{4-37}
\begin{aligned}
  \|u\|_{L^p(\Omega)}&\leq C_{0}\left(1+\frac{C_{2}}{C_{1}}\right)^{\frac{1}{\tilde{\nu}p}}\left(1+\left(\frac{C_{2}}{C_{1}}\right)^{\frac{\tilde{\nu}p-1}{p(\tilde{\nu}-1)}}\right)^{\frac{\tilde{\nu}-1}{\tilde{\nu}}}
\frac{2^{1+\frac{1}{p}-\frac{1}{\tilde{\nu}}}}{(2^{\frac{1}{p}}-1)^{1-\frac{1}{\tilde{\nu}}}}\frac{C_{3}^{1-\frac{1}{\tilde{\nu}}+\frac{2}{\tilde{\nu}p}}}{(CC_{1})^{\frac{1}{\tilde{\nu}}}}\|u\|_{\mathcal{W}_{X,0}^{1,\tilde{\nu}}(\Omega)} |\Omega|^{\frac{1}{p}}\\
&\leq  C_{0}\left(1+\frac{C_{2}}{C_{1}}\right)\frac{2^{1+\frac{1}{p}-\frac{1}{\tilde{\nu}}}}{(2^{\frac{1}{p}}-1)^{1-\frac{1}{\tilde{\nu}}}}\frac{C_{3}^{1-\frac{1}{\tilde{\nu}}+\frac{2}{\tilde{\nu}p}}}{(CC_{1})^{\frac{1}{\tilde{\nu}}}}\left( 1+\lambda_{1}(\tilde{\nu})^{-1}\right)^{\frac{1}{\tilde{\nu}}} |\Omega|^{\frac{1}{p}}\\
&\leq C_{0}\left(1+\frac{C_{2}}{C_{1}}\right)2^{2+\frac{1}{\tilde{\nu}}(\frac{1}{p}-2)}\frac{C_{3}^{1-\frac{1}{\tilde{\nu}}+\frac{2}{\tilde{\nu}p}}}{(CC_{1})^{\frac{1}{\tilde{\nu}}}}\left( 1+\lambda_{1}(\tilde{\nu})^{-1}\right)^{\frac{1}{\tilde{\nu}}} p^{1-\frac{1}{\tilde{\nu}}} |\Omega|^{\frac{1}{p}}\\
&\leq 4C_{0}\left(1+\frac{C_{2}}{C_{1}}\right)\frac{C_{3}^{1+\frac{1}{\tilde{\nu}}}}{(CC_{1})^{\frac{1}{\tilde{\nu}}}}\left( 1+\lambda_{1}(\tilde{\nu})^{-1}\right)^{\frac{1}{\tilde{\nu}}}p^{1-\frac{1}{\tilde{\nu}}} |\Omega|^{\frac{1}{p}},
\end{aligned}
\end{equation}
where
\begin{equation}\label{4-38}
  \lambda_{1}(\tilde{\nu}):=\inf_{u\in \mathcal{W}_{X,0}^{1,\tilde{\nu}}(\Omega),~u\neq 0}\frac{\int_{\Omega}|Xu|^{\tilde{\nu}}dx}{\int_{\Omega}|u|^{\tilde{\nu}}dx}>0.
\end{equation}

Denote by
\[ C_{4}:=4C_{0}\left(1+\frac{C_{2}}{C_{1}}\right)\frac{C_{3}^{1+\frac{1}{\tilde{\nu}}}}{(CC_{1})^{\frac{1}{\tilde{\nu}}}}\left( 1+\lambda_{1}(\tilde{\nu})^{-1}\right)^{\frac{1}{\tilde{\nu}}}.\]
It follows that
\[ \|u\|_{L^p(\Omega)}^{p}\leq C_{4}^{p}p^{p-\frac{p}{\tilde{\nu}}}|\Omega|\qquad \forall p>1,\]
Consequently, we get
\begin{equation*}
\begin{aligned}
&\int_\Omega\left(e^{\sigma|u|^{\frac{\tilde{\nu}}{\tilde{\nu}-1}}}-1\right)dx
=\sum_{j=1}^\infty \frac{\sigma^j}{j!}\|u\|_{L^{\frac{j\tilde{\nu}}{\tilde{\nu}-1}}(\Omega)}^{\frac{j\tilde{\nu}}{\tilde{\nu}-1}}\leq \sum_{j=1}^\infty \frac{\sigma^j}{j!}C_{4}^{\frac{j\tilde{\nu}}{\tilde{\nu}-1}}\left( \frac{j\tilde{\nu}}{\tilde{\nu}-1}\right)^{j}|\Omega|<+\infty,
\end{aligned}
\end{equation*}
provided
\[ 0<\sigma<\frac{\tilde{\nu}-1}{e\tilde{\nu}C_{4}^{\frac{\tilde{\nu}}{\tilde{\nu}-1}}}=\frac{\tilde{\nu}-1}{e\tilde{\nu}}C_{3}^{-\frac{\tilde{\nu}+1}{\tilde{\nu}-1}}\left( \frac{C_{1}}{4C_{0}(C_{1}+C_{2})}\right)^{\frac{\tilde{\nu}}{\tilde{\nu}-1}}\cdot\left(\frac{\lambda_{1}(\tilde{\nu})CC_{1}}{\lambda_{1}(\tilde{\nu})+1} \right)^{\frac{1}{\tilde{\nu}-1}}. \]

\end{proof}

\section*{Acknowledgements}
Hua Chen is supported by National Natural Science Foundation of China (Grant No. 12131017) and National Key R\&D Program of China (no. 2022YFA1005602). Hong-Ge Chen is supported by National Natural Science Foundation of China (Grant No. 12201607) and Knowledge Innovation Program of Wuhan-Shuguang Project (Grant No. 2023010201020286). Jin-Ning Li is supported by China National Postdoctoral Program for Innovative Talents (Grant No.  BX20230270).

\end{document}